\newtheorem{thm}{Theorem}[section]
\newtheorem{thm*}{Theorem}
\newtheorem{lem}[thm]{Lemma}
\newtheorem{cor}[thm]{Corollary}
\newtheorem{prop}[thm]{Proposition}
\theoremstyle{remark}
\newtheorem{remark}[thm]{Remark}
\newtheorem{example}[thm]{Example}
\theoremstyle{definition}
\newtheorem{deef}[thm]{Definition}
\newcommand{\R}{\mathbbm{R}}
\newcommand{\C}{\mathbbm{C}}
\newcommand{\Z}{\mathbbm{Z}}
\newcommand{\N}{\mathbbm{N}}
\newcommand{\ellL}{\mathcal{L}}
\newcommand{\rd}{\mathrm{d}}
\newcommand{\Ko}{\mathcal{K}}
\newcommand{\Bo}{\mathcal{B}}
\newcommand{\He}{\mathcal{H}}
\newcommand{\T}{\mathbbm{T}}
\newcommand{\Dom}{\mathrm{Dom}\,}
\renewcommand{\epsilon}{\varepsilon}
\newcommand{\supp}{\mathrm{s} \mathrm{u} \mathrm{p} \mathrm{p}\,}
\newcommand{\e}{\mathrm{e}}
\newcommand{\tra}{\mathrm{t}\mathrm{r}}
\title[Commutator estimates on contact manifolds]{Commutator estimates on contact manifolds and applications}
\author{Heiko Gimperlein, Magnus Goffeng}
\address{Heiko Gimperlein,\newline
\indent Maxwell Institute for Mathematical Sciences and \newline
\indent Department of Mathematics, Heriot-Watt University\newline
\indent Edinburgh EH14 4AS\newline
\indent United Kingdom\newline
\newline
\indent {\it and}\newline
\newline 
\indent Institute for Mathematics, University of Paderborn\newline
\indent Warburger Str.~100\newline
\indent 33098 Paderborn\newline
\indent Germany\newline
\newline
\indent Magnus Goffeng,\newline
\indent Institut f\"ur Analysis, Leibniz Universit\"at Hannover\newline
\indent Welfengarten 1\newline
\indent 30167 Hannover\newline
\indent Germany\newline}
\subjclass[2010]{35P20, 58B34 (primary), 32V20, 58Jxx (secondary)}
\keywords{Commutator estimates; Heisenberg calculus; hypoelliptic operators; weak Schatten norm estimates; Hankel operators; Connes metrics}
\email{h.gimperlein@hw.ac.uk, goffeng@math.uni-hannover.de}
\begin{document}
\maketitle

\begin{abstract}
This article studies sharp norm estimates for the commutator of pseudo-differential operators with multiplication operators on closed Heisenberg manifolds. In particular, we obtain a Calder\'{o}n commutator estimate: If $D$ is a first-order operator in the Heisenberg calculus and $f$ is Lipschitz in the Carnot-Carath\'eodory metric, then $[D,f]$ extends to an $L^2$-bounded operator. Using interpolation, it implies sharp weak--Schatten class properties for the commutator between zeroth order operators and H\"older continuous functions. We present applications to sub-Riemannian spectral triples on Heisenberg manifolds as well as to the regularization of a functional studied by Englis-Guo-Zhang.
\end{abstract}

\large
\section{Introduction}
\normalsize

Let $M$ be a closed manifold equipped with a bracket generating hyperplane bundle $H\subseteq TM$. The objects of study in this paper are commutators $[P,f]$, where $P$ is a pseudo-differential operator from the Heisenberg calculus on $M$ associated with $H$ and $f$ is a H\"{o}lder continuous function on $M$ with respect to the sub--Riemannian Carnot-Carath\'eodory metric. The main results are sharp norm estimates for commutators $[P,f]$. Operator norm estimates on $L^2(M)$ are obtained when $P$ is of first order. Weak Schatten norm estimates are obtained when $P$ is of zeroth order. The weak Schatten class norms describe the asymptotic behavior of its singular values. In particular, we obtain a Calder\'{o}n commutator estimate on Heisenberg manifolds.

\subsection{Motivation} Sharp norm estimates of commutators are motivated by applications. In noncommutative geometry, operator norm estimates for commutators of first-order operators with non-smooth functions are vital for the understanding of the Lipschitz algebra and the Connes metric associated with a spectral triple \cite{c}. From a spectral-theoretic perspective, the weak Schatten norm estimate for the commutator of a zeroth order operator with a H\"older continuous function is a first step towards showing spectral asymptotics for Hankel operators with symbols of low regularity. Our estimate sharpens the Schatten norm bound needed for an analytic degree formula for a non-smooth mapping in \cite{goffodd, goffeven}. It allows to study a regularized trace--functional from higher--dimensional complex analysis \cite{engzh}.

Previous work in this direction has been carried out by many authors. For classical pseudo-differential operators, the first-order estimate was first proven by Calder\'{o}n \cite{calderonone} and later generalized to arbitrary pseudo-differential operators in \cite{ceemtwo}. An overview of these results can be found in \cite[Chapter 3.6]{nlin}. Non-sharp versions of the weak Schatten estimate for zeroth order operators have been obtained in special cases by the second author in \cite{goffodd, goffeven}. The proofs there were based on the explicit singularity of the relevant integral kernels and mixed $L^p$-estimates combined with Russo's Theorem \cite{russo}. The greatest generality (to the authors' knowledge) under which weak Schatten norm estimates could be derived can be found in \cite{birsolinterpolation}. The assumptions of \cite{birsolinterpolation} are very mild and fit well with weakly singular kernels and the Sobolev-type scales on Riemannian manifolds. The methods of \cite{birsolinterpolation} are ill-suited for the anisotropic behavior in the Sobolev scale  appearing on sub-Riemannian manifolds. Such Sobolev spaces are far less studied than their Riemannian counterparts, see \cite{muellerbesov}.

Results of this type have also been obtained in the context of Hankel operators. Of particular interest are results that give an exact characterization of the symbols defining Schatten class operators of a given exponent; \cite{peller} treats $S^1$ and \cite{fangxia,fangxia2,fangxia3} treat $S^{2n-1}$. Holomorphic symbols were treated for arbitrary pre-compact strictly pseudo-convex domains in $\C^n$ in \cite{peloso}. Schatten class properties in higher dimensions (cf. \cite{fangxia,peloso}) come with with a "cutoff property"; for certain types of Hankel operators, there are no nonzero operators that belong to a $p$--Schatten class for $p$ smaller than the homogeneous dimension. Similar features have been observed in other settings, see for instance \cite{alroz,groz,rozshi}.

\subsection{Setup of the paper} We let $M$ denote a closed Riemannian Heisenberg manifold, i.e. the manifold $M$ is equipped with a hyperplane bundle $H\subseteq TM$ and a Riemannian metric. The metric and hyperplane bundle are fixed. By an abuse of notation, $M$ will denote the manifold with this choice of data. We always assume that $H$ is bracket-generating; $H+[H,H]=TM$. Here $[H,H]\subseteq TM$ denotes the range of commutators of vector fields horizontal to $H$. The prototypical example of a bracket generating hyperplane bundle comes from a contact structure on a manifold, see Example \ref{contactexample}. 
From the bracket-generating hyperplane bundle $H$ one defines the Carnot-Carath\'eodory metric $\rd_{CC}$ on $M$, which measures the distance between two points along paths tangent to $H$. For details, see Equation \eqref{dcccddeef} below on page \pageref{dcccddeef}. We let $\textnormal{Lip}_{CC}(M)\subseteq C(M)$ denote the Banach algebra of Lipschitz functions with respect to the metric $\rd_{CC}$. Since the mapping $(M,\rd_{CC})\to (M,\rd_{\textnormal{Rie}})$ is Lipschitz, but not bi-Lipschitz, $\textnormal{Lip}(M)$ is strictly contained in $\textnormal{Lip}_{CC}(M)$.

Associated with the sub-bundle $H$ is an algebra $\Psi^*_H(M)$ of Heisenberg type pseudo-differential operators \cite{bg, ponge}. We shall recall the construction in Section \ref{subsectionschatten}. Operators in $\Psi^*_H(M)$ are modelled on sub-Laplacian operators $\Delta_H\in \Psi_H^2(M)$. Ellipticity in the calculus $\Psi^*_H(M)$ will be called $H$-ellipticity; it implies hypo-ellipticity. There is an associated Sobolev scale $W_H^s(M):=(1+\Delta_H)^{-\frac{s}{2}}L^2(M)$. Any operator $D\in \Psi^m_H(M)$ defines a continuous operator from $W^s_H(M)$ to $W^{s-m}_H(M)$ for any $s$ which is closed if $D$ is $H$-elliptic. More generally, for smooth vector bundles $E,E'\to M$ one obtains a space $\Psi^*_H(M;E,E')$ of pseudo-differential operators, and for $f\in \textnormal{Lip}_{CC}(M)$, $f\cdot W^1_H(M;E)\subseteq W^1_H(M;E)$.

\subsection{Main results}

\begin{thm}
\label{mainbdd}
Let $M$ be a closed Riemannian bracket generating Heisenberg manifold and $E, E'\to M$ complex vector bundles thereon. Given any $D\in \Psi^1_H(M;E,E')$ there is a constant $C_{D}>0$ such that for all $f\in \textnormal{Lip}_{CC}(M)$,
\begin{enumerate}
\item the densely defined operator $[D,f]$ extends to a bounded operator from $L^2(M;E)$ to $L^2(M;E')$;
\item the following norm estimate is satisfied:
\[\|[D,f]\|_{\Bo(L^2(M;E),L^2(M;E'))}\leq C_{D}\|f\|_{\textnormal{Lip}_{CC}(M)}.\]
\end{enumerate}
\end{thm}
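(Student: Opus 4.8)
The plan is to reduce the general first-order Heisenberg operator $D\in\Psi^1_H(M;E,E')$ to a model situation by a partition of unity and a local coordinate argument, and then to obtain the model estimate by a Cotlar--Stein / Schur-type decomposition adapted to the anisotropic (parabolic) dilation structure of the Heisenberg calculus. First I would write $D = \sum_j \varphi_j D \psi_j + R$, where $\{\varphi_j,\psi_j\}$ is a finite family of smooth bump functions subordinate to Heisenberg coordinate charts with $\psi_j \equiv 1$ near $\operatorname{supp}\varphi_j$, and $R$ is smoothing (its commutator with any bounded $f$ is trivially bounded). Since $[\,\cdot\,,f]$ is a derivation, $[\varphi_j D\psi_j, f] = \varphi_j[D,f]\psi_j + \varphi_j D[\psi_j,f]$; the last term is harmless because $[\psi_j,f]$ is a bounded multiplication operator (indeed $\psi_j,f\in\textnormal{Lip}_{CC}$, and the product is Lipschitz, though for this term we only need boundedness and $D\in\Psi^1_H$ acting $W^1_H\to L^2$ together with the fact that multiplication by a $\textnormal{Lip}_{CC}$ function preserves $W^1_H$, which is quoted in the setup). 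So it suffices to bound $\varphi[D,f]\psi$ for $D$ a first-order Heisenberg operator supported in a single chart, i.e.\ to prove the estimate for a model operator on (an open subset of) the Heisenberg group $\mathbb{H}_n$, uniformly in $f$ with $f$ replaced by its restriction to the chart (whose $\textnormal{Lip}_{CC}$ seminorm is controlled by $\|f\|_{\textnormal{Lip}_{CC}(M)}$ since the chart embeds quasi-isometrically at small scales).

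For the model estimate, I would realize $D$ via its Schwartz kernel $k_D(x,y)$, which in Heisenberg coordinates satisfies Christ--Geller-type bounds: $k_D$ is singular along the diagonal with the singularity governed by the homogeneous dimension $Q=2n+2$, and more precisely $|k_D(x,y)|\lesssim \rho(x,y)^{-Q}$ and $|\partial_{x}k_D| + |\partial_y k_D|\lesssim \rho(x,y)^{-Q-1}$ with the derivatives and $\rho$ measured in the Carnot--Carathéodory (equivalently the homogeneous quasi-)metric — this is exactly where first order enters, $D\in\Psi^1_H$ meaning order $1$ in the parabolic counting, so the kernel behaves like a Calderón--Zygmund kernel of "order one above" the critical exponent. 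The commutator kernel is then $k_{[D,f]}(x,y) = (f(x)-f(y))\,k_D(x,y)$, and the Lipschitz bound gives $|f(x)-f(y)|\le \|f\|_{\textnormal{Lip}_{CC}}\,\rd_{CC}(x,y)\lesssim \|f\|_{\textnormal{Lip}_{CC}}\,\rho(x,y)$, so that
\[
|k_{[D,f]}(x,y)| \;\lesssim\; \|f\|_{\textnormal{Lip}_{CC}}\,\rho(x,y)^{-Q+1}\,,
\]
i.e.\ the commutator has a weakly singular kernel whose singularity is one order milder than critical. One then decomposes dyadically in $\rho(x,y)$ using the homogeneous dilations: $k_{[D,f]} = \sum_{\ell\ge 0} K_\ell$ with $K_\ell$ supported in an annulus $\rho(x,y)\sim 2^{-\ell}$ (plus a far piece that is bounded trivially by Schur). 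Each $K_\ell$ obeys a Schur bound: $\sup_x\int|K_\ell(x,y)|\,dy \lesssim \|f\|_{\textnormal{Lip}_{CC}}\, 2^{-\ell}\cdot 2^{-\ell(-Q+1)}\cdot 2^{-\ell Q}\cdot(\text{vol factor})$ — more carefully, $\int_{\rho(x,y)\sim 2^{-\ell}} \rho(x,y)^{-Q+1}\,dy \sim 2^{-\ell}$ since the annulus has volume $\sim 2^{-\ell Q}$ — hence $\|K_\ell\|_{\Bo(L^2)}\lesssim \|f\|_{\textnormal{Lip}_{CC}}\,2^{-\ell}$, and summing over $\ell$ gives the bound. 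Smoothness of $k_D$ (the gradient bound above) is used to pass from the naive triangle-inequality Schur estimate to a genuinely summable one, or alternatively to verify an almost-orthogonality estimate $\|K_\ell^* K_{\ell'}\|,\|K_\ell K_{\ell'}^*\| \lesssim \|f\|^2\, 2^{-|\ell-\ell'|}2^{-\min(\ell,\ell')}$ and invoke Cotlar--Stein; I would use whichever is cleaner, but since we only need $L^2$-boundedness and the kernel is \emph{sub}critical, the plain Schur sum already converges and Cotlar--Stein is not strictly necessary.

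The main obstacle is purely geometric bookkeeping: making the comparison between the Carnot--Carathéodory metric, the homogeneous (Koranyi-type) quasi-metric in which the Heisenberg-calculus kernel estimates are naturally stated, and the Euclidean coordinate scales in a chart, uniformly across the finite cover — i.e.\ establishing the kernel bounds $|k_D(x,y)| \lesssim \rho(x,y)^{-Q}$ and $|\nabla k_D|\lesssim \rho(x,y)^{-Q-1}$ for a general $D\in\Psi^1_H$ with the right uniformity, and checking that the ball-volume growth $|B_\rho(x,r)|\sim r^Q$ and the doubling property survive the transition to the manifold. These facts are standard in the Heisenberg-calculus literature (Beals--Greiner, Ponge, Christ--Geller--Głowacki--Polin), so I would quote them with precise references rather than reprove them; once they are in place, the dyadic Schur argument above is routine. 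A secondary technical point is the precise sense in which $[D,f]$ is "densely defined" — I would take the domain to be, say, smooth sections (or $W^1_H(M;E)$), note $f\cdot W^1_H \subseteq W^1_H$ from the setup so that $Df$ and $fD$ are both defined there, and the boundedness conclusion then yields the asserted bounded extension.
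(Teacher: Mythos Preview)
There is a genuine gap: your kernel exponent for $D\in\Psi^1_H$ is off by one, and this is exactly what makes the problem nontrivial. On a space of homogeneous dimension $Q=d+2$, an order-$m$ Heisenberg operator has Schwartz kernel that is (parabolically) homogeneous of degree $-(Q+m)$ off the diagonal, i.e.\ $|k_D(x,y)|\lesssim \rho(x,y)^{-Q-1}$ for $m=1$, not $\rho(x,y)^{-Q}$. (Sanity check: a \emph{zeroth}-order operator is $L^2$-bounded and has a genuine Calder\'on--Zygmund kernel $\sim\rho^{-Q}$; order one is one step more singular.) Consequently
\[
\bigl|k_{[D,f]}(x,y)\bigr|=\bigl|f(x)-f(y)\bigr|\,|k_D(x,y)|\;\lesssim\;\|f\|_{\textnormal{Lip}_{CC}}\,\rho(x,y)^{-Q},
\]
which is \emph{exactly critical}, not subcritical. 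Your dyadic Schur bound then becomes $\int_{\rho\sim 2^{-\ell}}\rho^{-Q}\,dy\sim 1$ for every $\ell$, the sum over $\ell$ diverges, and the plain Schur argument collapses. This is precisely the classical Calder\'on commutator phenomenon: the size estimate alone on a critical CZ kernel does not give $L^2$-boundedness; one needs cancellation.

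The paper's proof supplies that cancellation via the $T(1)$-theorem of Hyt\"onen--Martikainen on the doubling metric measure space $(M,\rd_{CC})$. It first writes $D=\sum_j T_jV_j+R$ with $T_j\in\Psi^0_H$, $V_j$ horizontal vector fields, and $R$ smoothing (Proposition~\ref{reductionprop}); it then verifies the three hypotheses of the $T(1)$-theorem for $[D,f]$: the CZ kernel estimates (Lemma~\ref{calderonestimates}), the $BMO$ condition $[D,f]1=\sum_j T_j(V_jf)\in BMO$ coming from $T_j:L^\infty\to BMO$ and $V_jf\in L^\infty$ (Lemma~\ref{bmocontinuity}), and a weak-boundedness property (Lemma~\ref{weakbddness}). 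Your Cotlar--Stein alternative could in principle be made to work, but the almost-orthogonality you would need (decay $2^{-|\ell-\ell'|}$, with no extra $2^{-\min(\ell,\ell')}$) again requires precisely these cancellation conditions---verifying them is the substance of the $T(1)$ argument, not a routine add-on.
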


The estimate is sharp in the sense that for the horizontal exterior differential
$\rd^H\in \Psi^1_H(M;\C,H^*)$ the norm $\|f\|_{C(M)}+\|[\rd^H,f]\|_{\Bo(L^2(M),L^2(M;H^*))}$ is
equivalent to $\|f\|_{\textnormal{Lip}_{CC}(M)}$.

\begin{remark}
The main technical tool in proving Theorem \ref{mainbdd} is the $T(1)$-theorem of \cite{hytmart}. This theorem holds in a greater generality than the situation considered in this paper. A technical generalization of Theorem \ref{mainbdd} to NIS operators might be of interest, see \cite[Definition 5.10]{keonene} or \cite{nspap}. With a view towards applications we restrict ourselves to a setting where a pseudodifferential calculus has been developed.
\end{remark}

We derive an estimate for commutators with zeroth order operators using a lifting argument. More concretely, let $D\in \Psi^1_H(M;E)$ be an $H$-elliptic operator. Assume that $D$ is self-adjoint as an operator on $L^2(M;E)$ when equipped with the domain $W^1_H(M;E)$. The spectral asymptotics for Heisenberg operators (see \cite{ponge}) implies $(D+i)^{-1}\in \mathcal{L}^{d+2,\infty}(L^2(M;E))$, where $d+1=\dim(M)$. Recall the definition of the weak Schatten ideals $\mathcal{L}^{p,\infty}(\He)$ for a Hilbert space $\He$, see for instance \cite{simon}. For a self-adjoint operator $T\in \Psi^0_H(M;E)$ such that $T^2=1$, we can apply Theorem \ref{mainbdd} to a certain self-adjoint matrix of first-order operators involving $T\sqrt{1+\Delta_H}$ and its adjoint, see Equation \eqref{dffromt} on page \pageref{dffromt}. By \cite{sww}, there is a $C>0$ such that for $f\in \textnormal{Lip}_{CC}(M,i\R)$ we obtain an operator inequality involving 
$$ \tilde{T}:=\begin{pmatrix} 0& T\\T&0\end{pmatrix}$$
of the form
\begin{equation}
\label{lipincl}
-C\|f\|_{\textnormal{Lip}_{CC}(M)}(1+\Delta_H)^{-1/2}\leq [ \tilde{T},f]\leq C\|f\|_{\textnormal{Lip}_{CC}(M)}(1+\Delta_H)^{-1/2},
\end{equation}
as self-adjoint operators on $L^2(M;E)$. By combining the operator inequality \eqref{lipincl} with an interpolation argument and some further matrix constructions we arrive at a similar property for arbitrary $T$ and functions H\"older continuous with respect to the Carnot-Carath\'eodory metric. For $\alpha\in (0,1)$, we let $C^\alpha_{CC}(M)$ denote the Banach algebra of H\"older continuous functions with respect to $\rd_{CC}$ of order $\alpha$. We use the convention $C^1_{CC}(M):=\textnormal{Lip}_{CC}(M)$.

\begin{thm}
\label{mainweakschatten}
Let $M$ be a closed Riemannian bracket generating Heisenberg manifold of dimension $d+1$, $E\to M$ a vector bundle thereon, $T\in \Psi^0_H(M;E)$ and $\alpha\in (0,1]$. There is a constant $C_{T,\alpha}>0$ such that any $a\in C^\alpha_{CC}(M)$ satisfies the weak Schatten class norm estimate
\[\|[T,a]\|_{\ellL^{\frac{d+2}{\alpha},\infty}(L^2(M;E))}\leq C_{T,\alpha}\|a\|_{C^\alpha_{CC}(M)}.\]
\end{thm}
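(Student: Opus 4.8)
The plan is to deduce Theorem~\ref{mainweakschatten} by real interpolation from the Lipschitz case $\alpha=1$, which is where the analytic input of Theorem~\ref{mainbdd} (through the operator inequality \eqref{lipincl}) enters, against the trivial endpoint $\alpha=0$: for $a\in C(M)$ one has $[T,a]\in\Bo(L^2(M;E))$ with $\|[T,a]\|_{\Bo(L^2(M;E))}\leq 2\|T\|_{\Bo(L^2(M;E))}\|a\|_{C(M)}$. The two facts that make this work are the classical identification of Hölder spaces as real interpolation spaces of a metric space, $C^\alpha_{CC}(M)=(C(M),\textnormal{Lip}_{CC}(M))_{\alpha,\infty}$ (valid since $(M,\rd_{CC})$ is a compact length space), and the interpolation of weak Schatten ideals $(\Bo(\He),\ellL^{p_1,\infty}(\He))_{\alpha,\infty}=\ellL^{p_1/\alpha,\infty}(\He)$, see \cite{birsolinterpolation,simon}. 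Applying the exact interpolation functor $(\,\cdot\,,\,\cdot\,)_{\alpha,\infty}$ to the linear map $a\mapsto[T,a]$ then yields the claim with $p_1=d+2$, since $1/p=\alpha/(d+2)$ gives $p=(d+2)/\alpha$; the constant $C_{T,\alpha}$ is built from the two endpoint constants and the interpolation constant.

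It remains to establish the endpoint $\alpha=1$ for an arbitrary $T\in\Psi^0_H(M;E)$. Writing $T=T_1+iT_2$ with $T_1,T_2\in\Psi^0_H(M;E)$ self-adjoint, it suffices to treat self-adjoint $T$, and after rescaling we may assume $\|T\|_{\Bo(L^2(M;E))}<1$. Then $1-T^2\in\Psi^0_H(M;E)$ is positive and invertible, so by spectral invariance of the Heisenberg calculus $\sqrt{1-T^2}\in\Psi^0_H(M;E)$, and
\[
\hat T:=\begin{pmatrix}T&\sqrt{1-T^2}\\[2pt]\sqrt{1-T^2}&-T\end{pmatrix}\in\Psi^0_H(M;E\oplus E)
\]
is self-adjoint with $\hat T^2=1$. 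Decomposing $a\in\textnormal{Lip}_{CC}(M)$ into real and imaginary parts and using $[\hat T,b]=-i[\hat T,ib]$ reduces to $i\R$-valued functions, so \eqref{lipincl} applies to $\hat T$ and bounds $[\widetilde{\hat T},a]$ from above and below by $\pm C\|a\|_{\textnormal{Lip}_{CC}(M)}(1+\Delta_H)^{-1/2}$, using the notation $\widetilde{\,\cdot\,}$ of \eqref{lipincl}.

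To turn this two-sided operator bound into a Schatten estimate I would use the elementary fact that if $A=A^*$ and $0\leq B$ satisfy $-B\leq A\leq B$, then $\mu_n(A)\leq\mu_n(B)$ for every $n$: writing $A=A_+-A_-$ with $E$ the spectral projection of $A$ onto $[0,\infty)$, one has $0\leq A_+\leq EBE$ as operators on the range of $E$, hence $\mu_n(A_+)\leq\mu_n(EBE)=\mu_n(B^{1/2}E)^2\leq\mu_n(B)$, and likewise for $A_-$; a quasi-norm bound for $A$ in $\ellL^{p,\infty}$ follows. Since $(1+\Delta_H)^{-1/2}\in\ellL^{d+2,\infty}$ by the Weyl law for sub-Laplacians \cite{ponge}, this gives $\|[\widetilde{\hat T},a]\|_{\ellL^{d+2,\infty}}\leq C\|a\|_{\textnormal{Lip}_{CC}(M)}$; passing to the block $[T,a]$ of $[\widetilde{\hat T},a]$, a compression which does not increase singular values, proves the case $\alpha=1$.

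I expect the main obstacle to be the bookkeeping for the two interpolation identities — in particular pinning down $(\Bo(\He),\ellL^{d+2,\infty}(\He))_{\alpha,\infty}=\ellL^{(d+2)/\alpha,\infty}(\He)$ with the correct secondary index $\infty$ at both ends (cleanest via reiteration from $\ellL^{p,\infty}(\He)=(\Bo(\He),\ellL^{p_0}(\He))_{p_0/p,\infty}$) and checking that the metric-space interpolation identity for Hölder spaces is available for the non-Riemannian, merely length-space metric $\rd_{CC}$; the remainder is boundedness of elementary linear maps together with functoriality of real interpolation, which is routine.
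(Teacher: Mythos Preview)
Your overall architecture matches the paper's: prove the $\alpha=1$ case from \eqref{lipincl} and then real-interpolate against the continuous endpoint. The execution, however, differs in three places, and each is worth a comment.

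\textbf{Reduction to a symmetry.} You embed a self-adjoint $T$ with $\|T\|<1$ into the $2\times 2$ symmetry $\hat T=\begin{pmatrix}T&\sqrt{1-T^2}\\ \sqrt{1-T^2}&-T\end{pmatrix}$. The paper instead uses a $4\times4$ Bott-type construction $F_T$ (so that only rational functions of $T$ appear) and then recovers $\tilde T$ from $\tilde T^2$ by a contour integral. Your route is shorter, but it hinges on $\sqrt{1-T^2}\in\Psi^0_H(M;E)$; this is true (complex powers of positive $H$-elliptic order-$0$ operators lie in the calculus, cf.\ \cite[Chapter~5]{ponge}), but you should say why $1-T^2$ is $H$-elliptic rather than just invoke ``spectral invariance''. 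The paper's detour avoids this issue entirely.

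\textbf{From the operator inequality to a Schatten bound.} Here you go directly via singular values. Note that your stated ``elementary fact'' $\mu_n(A)\le\mu_n(B)$ is \emph{false}: take $A=\begin{pmatrix}0&1\\1&0\end{pmatrix}$, $B=\mathrm{diag}(2,1/2)$; then $-B\le A\le B$ but $\mu_1(A)=1>1/2=\mu_1(B)$. What your argument actually proves is $\mu_n(A_\pm)\le\mu_n(B)$, which (since the singular values of $A$ are the merge of those of $A_\pm$) gives $\mu_{2n}(A)\le\mu_n(B)$ and hence the $\ellL^{p,\infty}$ quasi-norm bound with a factor $2^{1/p}$. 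So your conclusion stands; only the headline needs correcting. The paper takes a different path here: it first upgrades \eqref{lipincl} to a genuine mapping estimate $\|[T,a]\|_{W^{-1/2}_H\to W^{1/2}_H}\lesssim\|a\|_{\mathrm{Lip}_{CC}}$ and only then composes with the $\ellL^{d+2,\infty}$ inclusion $W^{1/2}_H\hookrightarrow L^2\hookrightarrow W^{-1/2}_H$. That extra Sobolev step is not needed for Theorem~\ref{mainweakschatten} itself, but the paper reuses it later (Proposition~\ref{complexinterandsobreg}).

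\textbf{Interpolation identities.} You cite $(C(M),\mathrm{Lip}_{CC}(M))_{\alpha,\infty}=C^\alpha_{CC}(M)$ as a general metric-space fact; this is indeed elementary (the $K$-functional is comparable to the modulus of continuity, and the reverse inclusion follows from a McShane-type Lipschitz approximation), so you do not need the length-space hypothesis. The paper instead builds an anisotropic Besov scale in each chart and proves the identity that way (Theorem~\ref{interpolatingwithlipschitz}); this is heavier but self-contained. On the operator side you interpolate the couple $(\Bo(\He),\ellL^{d+2,\infty})$, whereas the paper uses $(\Ko(\He),\ellL^{d+2,\infty})$ together with the observation that $a\mapsto[T,a]$ maps $C(M)$ into $\Ko(L^2)$; both couples give $\ellL^{(d+2)/\alpha,\infty}$ by reiteration, so either is fine.
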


For $a \in C^\infty(M)$, it is a well known consequence of the Heisenberg calculus and its Weyl law that $[T,a] \in {\ellL^{d+2,\infty}(L^2(M;E))}$ but generally $[T,a] \notin {\ellL^{d+2-\varepsilon,\infty}(L^2(M;E))}$ for any $\varepsilon>0$.

We can directly deduce weak Schatten class properties for Hankel operators on the boundary of a pre-compact strictly pseudo-convex domain $\Omega$ in a complex manifold of $n$ complex dimensions. The exponent $d+2$ in Theorem \ref{mainweakschatten} is sharp because of the next corollary and \cite[Theorem 1.5 and Theorem 1.6]{fangxia}. We let $P_{\partial \Omega}\in \Psi^0_H(\partial \Omega)$ denote the Szeg\"o projection (see more below in Remark \ref{szegoex}).

\begin{cor}
\label{hankcor}
For $\alpha\in (0,1]$ and $a\in C^\alpha_{CC}(\partial \Omega)$, the Hankel operators $(1-P_{\partial \Omega})aP_{\partial \Omega}$ belong to the weak Schatten class $\mathcal{L}^{2n/\alpha,\infty}(L^2(\partial\Omega))$.
\end{cor}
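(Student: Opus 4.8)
The plan is to identify $\partial\Omega$ as a closed Riemannian bracket generating Heisenberg manifold and then to read off the corollary from Theorem \ref{mainweakschatten} applied to the Szeg\"o projection. First I would equip $\partial\Omega$ with the hyperplane bundle $H=T\partial\Omega\cap J(T\partial\Omega)$, the maximal complex subbundle of the tangent bundle, where $J$ is the complex structure of the ambient manifold, together with any compatible Riemannian metric, e.g.\ the restriction of an ambient Hermitian metric. This makes $\partial\Omega$ a Riemannian Heisenberg manifold of dimension $d+1=2n-1$, so that the homogeneous dimension is $d+2=2n$. Strict pseudo-convexity of $\Omega$ says exactly that the Levi form is non-degenerate, which forces $H+[H,H]=T\partial\Omega$; thus $H$ is bracket generating and in fact defines a contact structure as in Example \ref{contactexample} (with contact form $d^c\rho|_{\partial\Omega}$ for a defining function $\rho$). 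The Carnot-Carath\'eodory metric attached to this $H$ is the one implicit in the notation $C^\alpha_{CC}(\partial\Omega)$.

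Next I would invoke the classical fact that the Szeg\"o projection $P_{\partial\Omega}$, the orthogonal projection of $L^2(\partial\Omega)$ onto the Hardy space, lies in the Heisenberg calculus of order zero, $P_{\partial\Omega}\in\Psi^0_H(\partial\Omega)$, see Remark \ref{szegoex}. Theorem \ref{mainweakschatten} then applies with $E=\C$, $T=P_{\partial\Omega}$ and the given $\alpha\in(0,1]$, and produces a constant $C>0$ with
\[\|[P_{\partial\Omega},a]\|_{\ellL^{2n/\alpha,\infty}(L^2(\partial\Omega))}\leq C\,\|a\|_{C^\alpha_{CC}(\partial\Omega)}\]
for all $a\in C^\alpha_{CC}(\partial\Omega)$.

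Finally I would turn this commutator estimate into the Hankel statement. Since $P_{\partial\Omega}$ is an idempotent, $P_{\partial\Omega}^2=P_{\partial\Omega}$, one computes
\[(1-P_{\partial\Omega})\,a\,P_{\partial\Omega}=a P_{\partial\Omega}-P_{\partial\Omega}a P_{\partial\Omega}=-[P_{\partial\Omega},a]\,P_{\partial\Omega},\]
so the Hankel operator is the product of $[P_{\partial\Omega},a]\in\ellL^{2n/\alpha,\infty}(L^2(\partial\Omega))$ with a bounded operator. As $\ellL^{p,\infty}(L^2(\partial\Omega))$ is a two-sided operator ideal, the Hankel operator lies in $\ellL^{2n/\alpha,\infty}(L^2(\partial\Omega))$, which is the claim.

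The argument is very short, and the only genuine point to check is a matter of conventions: that the CR-induced contact Heisenberg structure on $\partial\Omega$ is the one referred to by Theorem \ref{mainweakschatten} and by the notation $C^\alpha_{CC}$, and that $P_{\partial\Omega}$ really belongs to $\Psi^0_H(\partial\Omega)$ rather than to some merely related class of operators. Both are standard but worth spelling out. Sharpness of the exponent $2n/\alpha$ is not part of the proof; it follows by comparison with \cite[Theorem 1.5 and Theorem 1.6]{fangxia}, as already noted before the statement.
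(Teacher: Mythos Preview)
Your proof is correct and follows exactly the approach the paper intends: the corollary is stated as a direct consequence of Theorem \ref{mainweakschatten}, using that $\partial\Omega$ is a contact (hence bracket generating Heisenberg) manifold of dimension $2n-1$ as in Example \ref{contactexample}, that $P_{\partial\Omega}\in\Psi^0_H(\partial\Omega)$ as in Remark \ref{szegoex}, and the identity $(1-P_{\partial\Omega})aP_{\partial\Omega}=-[P_{\partial\Omega},a]P_{\partial\Omega}$ together with the ideal property of $\mathcal{L}^{2n/\alpha,\infty}$.
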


When $\partial \Omega=S^1$ this result can be found in \cite[Lemma 8.3]{alexpeller} for the H\"older-Zygmund classes. For the unit sphere related results were obtained for small Hankel operators in \cite{liruss}.

The estimates of Theorem \ref{mainbdd} and Theorem \ref{mainweakschatten} are motivated by the applications in Section \ref{applicationssection}. On the one hand, we investigate the Lipschitz algebra of a spectral triple in noncommutative geometry and, in particular, show that the Carnot-Carath\'eodory metric on $M$ is a Gromov--Hausdorff limit of certain sub--Riemannian Connes metrics. Further motivation is given in Subsection \ref{ccmetsubse} and \cite{hasselmann}. The following result appears as Corollary \ref{ghlimit} on page \pageref{ghlimit} in the main body of the text, for precise notations see Subsection \ref{ccmetsubse}.

\begin{thm}
Let $M$ be a closed sub-Riemannian $H$-manifold. There exists a one--parameter family of $H$--elliptic operators $(0,1]\ni \theta\mapsto D_\theta\in \Psi^1_H(M,S^HM )$ such that
$(C^\infty(M), L^2(M, S^HM), D_\theta)$ is a spectral triple of metric dimension $d+2$ such that $(M, d_{D_\theta}) \to (M, d_{CC})$ with respect to Gromov--Hausdorff distance as $\theta \to 0$. 
\end{thm}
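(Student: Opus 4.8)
\emph{Reduction.} Recall that the Connes metric of $D_\theta$ is
$d_{D_\theta}(x,y)=\sup\{|a(x)-a(y)|:a\in C^\infty(M),\ \|[D_\theta,a]\|_{\Bo(L^2)}\le1\}$, that the supremum may be restricted to real-valued $a$, and that the supremum over real $a\in C^\infty(M)$ with $\sup_x|\rd^Ha(x)|_{H^*}\le1$ already computes $d_{CC}$ (mollify the distance function to the basepoint). Since all metrics in sight live on the same compact set $M$, the diagonal of $M\times M$ is a correspondence of distortion $\sup_{x,y}|d_{D_\theta}(x,y)-d_{CC}(x,y)|$, so it suffices to show this quantity tends to $0$ as $\theta\to0$. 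Finally, $[D_\theta,a]$, $\textnormal{Lip}_{CC}(a)=\sup_x|\rd^Ha(x)|_{H^*}$ and $|a(x)-a(y)|$ are unchanged under $a\mapsto a-\mathrm{const}$, so we may normalise $a(x)=0$, in which case $\|a\|_{\textnormal{Lip}_{CC}(M)}\le(1+\mathrm{diam}_{CC}(M))\,\textnormal{Lip}_{CC}(a)$, with $\mathrm{diam}_{CC}(M)<\infty$ by Chow--Rashevskii.

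\emph{Construction and the spectral triple.} Equip $H$ with a compatible almost complex structure and let $S^HM=\Lambda^{0,\bullet}H^*$ carry the horizontal Clifford action $c$ normalised by $c(v)^*=-c(v)$, $c(v)^2=-|v|^2_{H^*}$; let $D^H=\sqrt2(\bar\partial_b+\bar\partial_b^*)\in\Psi^1_H(M,S^HM)$ be the associated horizontal Dirac operator for a Clifford-compatible horizontal connection. Then $D^H$ is formally self-adjoint and the Leibniz rule gives $[D^H,a]=c(\rd^Ha)$ \emph{exactly}, so $\|[D^H,a]\|_{\Bo(L^2)}=\sup_x|\rd^Ha(x)|_{H^*}=\textnormal{Lip}_{CC}(a)$. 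However $D^H$ is not $H$-elliptic: its Heisenberg principal symbol fails the Rockland condition in the bidegrees $(0,0)$ and $(0,m)$, $m=\dim_\C H$ (this is the analytic origin of the Szeg\"o projection). We therefore fix a self-adjoint $B\in\Psi^1_H(M,S^HM)$ --- a contact Dirac type correction in the spirit of van Erp --- whose Heisenberg principal symbol anticommutes with that of $D^H$ and is strictly positive in the central variable on the two degenerate bidegrees, and set $D_\theta:=D^H+\theta B$ for $\theta\in(0,1]$. The model of $D_\theta$ then squares to $\sigma_1(D^H)^2+\theta^2\sigma_1(B)^2$, which is invertible in every non-trivial representation of the osculating groups (the $\sigma_1(D^H)^2$-part handles the non-degenerate directions, the $\theta^2\sigma_1(B)^2$-part the degenerate bidegrees) and equals $c(\xi_H)$ on the abelianisation; hence $D_\theta$ is $H$-elliptic, thus (as $M$ is closed) essentially self-adjoint with compact resolvent, and $[D_\theta,a]\in\Psi^0_H$ is bounded by Theorem~\ref{mainbdd}. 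So $(C^\infty(M),L^2(M,S^HM),D_\theta)$ is a spectral triple, and the Heisenberg--Weyl asymptotics recalled before Theorem~\ref{mainweakschatten} give $(D_\theta\pm i)^{-1}\in\ellL^{d+2,\infty}(L^2(M,S^HM))$ with non-vanishing leading coefficient, i.e. metric dimension $d+2$.

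\emph{Convergence of the metrics.} From $[D_\theta,a]=c(\rd^Ha)+\theta[B,a]$ and $\|[B,a]\|_{\Bo(L^2)}\le C_B\|a\|_{\textnormal{Lip}_{CC}(M)}$ (Theorem~\ref{mainbdd}) together with the normalisation of the first paragraph, we obtain, with $c:=C_B(1+\mathrm{diam}_{CC}(M))$,
\[
(1-\theta c)\,\textnormal{Lip}_{CC}(a)\ \le\ \|[D_\theta,a]\|_{\Bo(L^2)}\ \le\ (1+\theta c)\,\textnormal{Lip}_{CC}(a)
\]
for every real $a\in C^\infty(M)$. Feeding these two bounds into the suprema defining $d_{D_\theta}$ and $d_{CC}$ yields, for $\theta<1/c$,
\[
(1+\theta c)^{-1}\,d_{CC}(x,y)\ \le\ d_{D_\theta}(x,y)\ \le\ (1-\theta c)^{-1}\,d_{CC}(x,y)\qquad(x,y\in M),
\]
hence $\sup_{x,y}|d_{D_\theta}(x,y)-d_{CC}(x,y)|\le\frac{\theta c}{1-\theta c}\,\mathrm{diam}_{CC}(M)\to0$, which by the reduction step is the asserted Gromov--Hausdorff convergence $(M,d_{D_\theta})\to(M,d_{CC})$.

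\emph{The main obstacle.} Everything after the construction is routine manipulation of the Connes metric once Theorem~\ref{mainbdd} is in hand. The delicate point is the \emph{fixed} first-order correction $B$: it must repair the Rockland condition of $D^H$ in the degenerate bidegrees simultaneously for all $\theta\in(0,1]$ --- not merely for $\theta$ outside a discrete resonant set --- while contributing only the harmless cross-term $\theta[B,a]$ to the commutator. This is exactly where the anisotropy of the Heisenberg symbol calculus and the contact Dirac constructions enter; concretely one must verify that the model of $B$ can be chosen self-adjoint, anticommuting with the model of $D^H$, and uniformly positive in the central/vertical variable, so that the deformed model $\sigma_1(D^H)^2+\theta^2\sigma_1(B)^2$ stays invertible for every positive $\theta$.
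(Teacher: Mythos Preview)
Your argument is correct and follows the same overall strategy as the paper: perturb the horizontal Dirac operator $D^H$ by $\theta$ times a fixed first-order Heisenberg operator, use Theorem~\ref{mainbdd} to bound $\|[D_\theta-D^H,a]\|$ by $\theta C\|a\|_{\textnormal{Lip}_{CC}}$, and deduce the sandwich $(1+\theta c)^{-1}d_{CC}\le d_{D_\theta}\le(1-\theta c)^{-1}d_{CC}$. Your inline derivation of this sandwich (via the normalisation $a(x_0)=0$ and the triangle inequality for $\|c(\rd^Ha)+\theta[B,a]\|$) is precisely the content of the proposition of Hasselmann that the paper cites rather than proves.

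The only substantive difference is in the correction term. You ask for a $B$ whose model symbol \emph{anticommutes} with that of $D^H$, so that $\sigma(D_\theta)^2=\sigma(D^H)^2+\theta^2\sigma(B)^2$; the paper instead takes $S$ whose model symbol is the \emph{projection onto the kernel} of the model symbol of $D^H$ in every fibrewise representation. The paper's choice is easier to justify: the kernel projector exists by general principles, and $\sigma(D^H)+\theta\Pi_{\ker}$ is block-diagonal and invertible for all $\theta\ne0$ with no further work. Your anticommuting ansatz is more elegant but, as you yourself flag, requires a genuine existence argument in the Heisenberg symbol algebra. Note also that your concrete choice $S^HM=\Lambda^{0,\bullet}H^*$ with an almost complex structure on $H$ forces $\operatorname{rank}H=d$ to be even (the contact case); the paper's formulation via Hasselmann's horizontal spin structure does not carry this parity restriction.
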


On the other hand, we regularize a mulitlinear functional first studied by Englis-Guo-Zhang \cite{engguozh, engzh} on the boundary of a pre-compact strictly pseudo-convex domain $\Omega$ in a complex manifold of $n$ complex dimensions. We consider two families of multilinear functionals $\xi_{k,\omega}$ and $\zeta_{k,\omega}$ on $(C^{n/k}_{CC}(M))^{\hat{\otimes}2k}$, where $\zeta_{n,\omega}$ was first studied by Englis-Guo-Zhang on $C^\infty(\partial\Omega)$. These functionals turn out to have interesting "cutoff-properties" as shown Subsection \ref{subsregfunc}. The following result appears as Theorem \ref{vanishingcor} in the main text, see page \pageref{vanishingcor}. 

\begin{thm}
The multilinear functionals $\xi_{k,\omega}$ and $\zeta_{k,\omega}$ are continuous and in general nonzero but vanish identically on the closed subspace spanned by elements of the form $a_1\otimes \cdots \otimes a_{2k}$ such that for some $j$ and $\beta>n/k$, $a_j\in C^\beta_{CC}(M)$. 
\end{thm}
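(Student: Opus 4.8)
The plan is to reduce the vanishing statement to the weak Schatten estimate of Theorem \ref{mainweakschatten} together with a Hölder-type trace inequality for products of operators lying in weak Schatten ideals. First I would recall the precise definition of the functionals $\xi_{k,\omega}$ and $\zeta_{k,\omega}$ from Subsection \ref{subsregfunc}: they are built from an alternating trace of a product of commutators $[T,a_1],[T,a_2],\dots,[T,a_{2k}]$, where $T=2P_{\partial\Omega}-1$ is the self-adjoint involution associated with the Szegő projection, possibly twisted by a weight $\omega$. The key analytic input is that each commutator $[T,a_j]$ with $a_j\in C^{n/k}_{CC}(M)$ lies in $\ellL^{2k,\infty}(L^2(\partial\Omega))$ by Theorem \ref{mainweakschatten} (with $d+2=2n$ and $\alpha=n/k$), and hence a product of $2k$ such commutators lies in $\ellL^{1,\infty}$, which is \emph{not} trace class; this is exactly why a regularization (a Dixmier-type functional or a conditional trace) is needed, and the continuity of $\xi_{k,\omega},\zeta_{k,\omega}$ follows from the Hölder inequality for weak Schatten norms combined with the estimate of Theorem \ref{mainweakschatten}.

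The main step is the vanishing assertion. Suppose that for some index $j$ we have $a_j\in C^\beta_{CC}(M)$ with $\beta>n/k$. Then Theorem \ref{mainweakschatten} gives the \emph{improved} membership $[T,a_j]\in \ellL^{2n/\beta,\infty}(L^2(\partial\Omega))$ with $2n/\beta<2k$. Combining this with the $\ellL^{2k,\infty}$ bound on the remaining $2k-1$ commutators and the Hölder inequality for the $\ellL^{p,\infty}$ scale, I would conclude that the product $\prod_{i=1}^{2k}[T,a_i]$ lies in $\ellL^{q,\infty}$ for some exponent $q$ with
\[
\frac1q=\frac{\beta}{2n}+\frac{2k-1}{2k}>1,
\]
hence $q<1$, so the product is trace class. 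The regularized functional, being constructed so as to vanish on trace-class operators (it is the ``anomaly'' measuring failure of tracial cyclicity, or a Dixmier trace which annihilates $\ellL^1$), therefore vanishes on such an elementary tensor. Since the set of such tensors is total in the indicated closed subspace and the functionals are continuous by the first part, the functional vanishes on the whole subspace.

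The step I expect to be the main obstacle is the bookkeeping in the definition of the functionals: $\xi_{k,\omega}$ and $\zeta_{k,\omega}$ are not literally $\operatorname{Tr}\prod[T,a_i]$ but involve a symmetrization over orderings, insertion of the weight $\omega$, and possibly a Connes--Moscovici-style residue or a specific choice of Dixmier trace $\operatorname{Tr}_\omega$; one must check that \emph{every} term in the defining expression inherits the improved Schatten exponent when one of the arguments is more regular, and that the regularization genuinely kills $\ellL^{1}$ (for a Dixmier trace this is automatic, for a ``conditional trace'' $\operatorname{Tr}(PXP)-\operatorname{Tr}(QXQ)$-type construction one needs the commutator structure). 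A secondary technical point is verifying that $C^\beta_{CC}(M)\hookrightarrow C^{n/k}_{CC}(M)$ continuously so that the elementary tensors with one regular factor indeed sit inside the domain $(C^{n/k}_{CC}(M))^{\hat\otimes 2k}$; this is immediate since $\beta>n/k$ and $M$ is compact. Once these identifications are in place, the argument is the Hölder/ideal computation sketched above, applied termwise.
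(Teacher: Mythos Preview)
Your argument for continuity and for the vanishing on $V_k$ is exactly the paper's: rewrite the operators inside $\tra_\omega$ as products of commutators $[P_M,a_i]$ (this is the content of the identities \eqref{xirefo} and \eqref{zetarefo}), apply Theorem \ref{mainweakschatten} to place each commutator in $\ellL^{2k,\infty}$ (respectively $\ellL^{2n/\beta,\infty}$ for the more regular factor), and conclude via the H\"older inequality for weak Schatten classes that the product is in $\ellL^{1,\infty}$ (respectively in $\ellL^1$, on which every Dixmier trace vanishes). Your bookkeeping worry dissolves once you know that $\omega$ is a generalized limit and $\tra_\omega$ is the associated Dixmier trace; there is no symmetrization over orderings or Connes--Moscovici residue, and the extra factor of $P_M$ on the left is bounded and does not affect the ideal membership.

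There is, however, a genuine omission: you do not address the clause ``in general nonzero'', which is part of the statement and is not a triviality. The paper supplies this via two separate, nontrivial arguments. For $\zeta_{n,\omega}$ one invokes the Englis--Zhang formula \eqref{engszhangcomp}, valid for smooth symbols on the boundary of a strictly pseudo-convex domain, and then exhibits explicit smooth functions (pullbacks of coordinate functions from a small ball in $S^{2n-1}$) for which the right-hand side is strictly positive. For $\xi_{k,\omega}$ the paper gives a direct computation on $S^1$ (Lemma \ref{nonvanlem}): for the lacunary Weierstrass-type function $W(z)=\sum 2^{-n/4}(z^{2^n}+z^{-2^n})\in C^{1/4}(S^1)$ one estimates $\xi_{2,\omega}(W,W,W,W)$ from below by a positive constant, using the min--max principle and an explicit lower bound on $\|P[P,W]We_l\|_{L^2}^2$. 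Neither of these follows from the Schatten-class machinery you invoke, and some such concrete input is unavoidable, since the vanishing on $V_k$ shows in particular that smooth test functions cannot detect $\xi_{k,\omega}$ for $k>n$.
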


Even though $\xi_{k,\omega}$ and $\zeta_{k,\omega}$ vanish on smooth functions, they have geometric significance in some cases. Geometric formulas are pursued further in \cite{gg}.

\subsection*{Relations to noncommutative geometry}

Two central objects of study in noncommutative geometry are unbounded and bounded Fredholm modules.
Both objects can be viewed as cocycles for analytic $K$-homology, a cohomology theory for $C^*$-algebras; see more in \cite{baajjulg,blackadar,goffmesland,higsonroe,jeto}. Examples relevant to this paper are given by elliptic operators in the Heisenberg calculus on a bracket generating Heisenberg manifold. It is a direct consequence of the symbol calculus that for a self-adjoint $H$-elliptic Heisenberg operator $D$ of order $1$ acting on a vector bundle $E$, $(C^\infty(M),L^2(M,E),D)$ forms a spectral triple. For a $d+1$-dimensional manifold $M$, it is $(d+2,\infty)$-summable, because of the spectral asymptotics of $H$-elliptic Heisenberg operators \cite{ponge}. The idea behind the proof of Theorem \ref{mainweakschatten} is to lift a bounded Fredholm module to an unbounded Fredholm module via the calculus of Heisenberg operators. Using this lift, we can apply Theorem \ref{mainbdd} through Equation \eqref{lipincl}. Theorem \ref{mainbdd} implies that $\textnormal{Lip}_{CC}(M)$ continuously embedds into the abstractly defined Lipschitz algebra for an unbounded Fredholm module associated with an $H$-elliptic first order operator. Theorem \ref{mainweakschatten} answers the analogous question in the bounded picture of $K$-homology; $C^\alpha_{CC}(M)$ is continuously embedded in the H\"older algebra abstractly defined from a bounded Fredholm module associated with an $H$-elliptic zeroth order operator with the dimensionality being the homogeneous dimension $d+2$ of $M$.

\subsection*{Contents of the paper}

This paper is organized as follows. In Section \ref{sectionone}, we recall some relevant background from sub-Riemannian as well as the Heisenberg pseudodifferential calculus. We also recall the $T(1)$-theorem of \cite{hytmart}; our main technical tool for proving Theorem \ref{mainbdd}. In Section \ref{sectiontwo}, we prove Theorem \ref{mainbdd}. The main idea is to reduce to the case $D=T\cdot X$, where $T$ is of order $0$ and $X$ is a differential operator horizontal to the Heisenberg structure, and apply the $T(1)$-theorem of \cite{hytmart}.

With Theorem \ref{mainbdd} in hand, the Lipschitz case of Theorem \ref{mainweakschatten} follows from Equation \eqref{lipincl} when $T$ is a self-adjoint zeroth order operator with $T^2=1$. In Section \ref{provingweakschattenlipschitz}, we prove the zeroth order estimate of Theorem \ref{mainweakschatten} for Lipschitz functions after reducing to the case $T^2=1$ by means of a matrix construction similar to that of the Bott projection. We prove Theorem \ref{mainweakschatten} in the general case in Section \ref{interpolationsection} by introducing an anisotropic Besov scale in each coordinate chart. This scale is used in an interpolation argument between the spaces of continuous and Lipschitz functions. Section \ref{applicationssection} addresses applications of our estimates in non--commutative geometry and complex analysis.

\subsection*{Notation} 

We use the convention $\N=\{0,1,2,\ldots\}$. The notation $a\lesssim b$ means that there is a constant $C$ such that $a\leq Cb$.  We write $a\sim b$ if $a\lesssim b$ and $b\lesssim a$. Also, we write $C^0_{CC}(M)$ for the space of continuous functions and $C^1_{CC}(M)$ for the space of Lipschitz continuous functions $\textnormal{Lip}_{CC}(M)$ in the Carnot-Caratheodory metric.

\large
\section{Preliminaries}
\label{sectionone}
\normalsize

We give a short introduction to the geometry and analysis of operators appearing in sub-Riemannian geometry, as needed in later sections. The results in this section are well-known. We summarize them here for later reference.

\subsection{Carnot groups}

The local situation of a sub-Riemannian manifold is modeled on a homogeneous group, also called a Carnot group. Let $G$ be a simply connected nilpotent Lie group with Lie algebra $\mathfrak{g}$. The central series
$$V_j:=\sum_{l=1}^j\underbrace{[\mathfrak{g},[\cdots[\mathfrak{g},\mathfrak{g}]\cdots]]}_{l\mbox{  times}}$$
for $\mathfrak{g}$ induces a grading as linear spaces
\[\mathfrak{g}=\bigoplus_{j=1}^R W_j, \quad W_j:=V_j/V_{j-1}.\]
We will write coordinates in $\mathfrak{g}$ as $(x_1,\ldots, x_R)$ where $x_i\in W_i$. Since $G$ is simply connected and nilpotent, $\exp:\mathfrak{g}\to G$ is a diffeomorphism. Recall that a homogeneous group $G$ is a simply connected nilpotent Lie group equipped with an $\R_+$-action $\delta$ such that
\[\delta\!_t( \exp(x_1,\ldots, x_R))=\exp(t^{a_1}x_1,\ldots, t^{a_R}x_R),\]
for an $a=(a_1,\ldots, a_R)\in \R^R$. One example of such a choice of $a$ is $a=(1,2,\ldots, R)$. An element $X\in \mathfrak{g}$ is called horizontal if
\[(\delta_t)_*X=t^{a_1}X.\]

Equipping $W_1,\ldots, W_R$ with a length function naturally induces a length function on $G$ called the Koranyi gauge;
\[|x|_G:=\sqrt[2a_1\cdots a_R]{\sum_{l=1}^R |x_l|^{2a_l\cdots a_R}} .\]
We also define the Koranyi balls
$$B_G(x_0,r):=\{x\in \R^{d+1}: |x_0^{-1}\cdot x|_G<r\}.$$

\subsubsection{Heisenberg group}\label{Heisenbergsection}

A homogeneous group structure on $\R^{d+1}$ can be constructed from an antisymmetric form $L=(L_{jk})_{j,k=1}^d$ on $\R^d$. We denote coordinates by $x=(t,z)\in \R\times \R^d= \R^{d+1}$ and equip $\R^{d+1}$ with the Lie algebra structure defined from
\[[(t,z),(t',z')]=(L(z,z'),0).\]
The corresponding Lie group $\R^{d+1}$, with multiplication given by
\[(t,z)\cdot (t',z')=\left(t+t'+\frac{1}{2}L(z,z'),z+z'\right),\]
is called a Heisenberg group.
We equip $\R^{d+1}$ with the $\R_+$-action defined by
\begin{equation}
\label{actionr}
\lambda.(t,z)=(\lambda^2t,\lambda z).
\end{equation}
This action is indeed an action of $\R_+$ on $\R^{d+1}$ as a Lie group;
\[\lambda.\left((t,z)\cdot (t',z')\right)=\lambda.(t,z)\cdot \lambda.(t',z').\]
Hence $\R^{d+1}$ becomes a homogeneous Lie group in this fashion.

We denote the action of $\R_+$ on functions by $T_\lambda$:  $T_\lambda f(x):=f(\lambda.x)$. A natural length function can be defined on $\R^{d+1}$ from the Koranyi gauge, which for the Heisenberg group takes the form
\[|x|_H=\sqrt[4]{t^2+|z|^4},\]
and is homogeneous for the $\R_+$-action. A basis for the horizontal vector fields on $\R^{d+1}$ is given by the vector fields
\[X_j:=\frac{\partial}{\partial z_j}+\frac{1}{2}\sum_{k=1}^d L_{jk}z_k\frac{\partial}{\partial t}, \quad j=1,2,\cdots, d.\]
The vertical vector field is defined by
\[X_0:=\frac{\partial}{\partial t}.\]
The horizontal vector fields are homogeneous of degree $1$ with respect to the $\R_+$-action and satisfy the commutation relation
\[[X_j,X_k]=L_{jk}X_0.\]

\subsection{Carnot and Heisenberg manifolds}

Assume that $M$ is a smooth manifold. To simplify our discussions we always assume $M$ to be connected and closed. Assume that $\mathcal{V}\subseteq TM$ is a subbundle. We will use the notation
$$\mathcal{V}_{(k)}:=\sum_{j\leq k}\underbrace{[\mathcal{V}[\cdots,[\mathcal{V},\mathcal{V}]\cdots ]]}_{j\mbox{  times}}$$
for iterated commutators of $\mathcal{V}$ with itself. We use the convention $\mathcal{V}_{(0)}=0$ and $\mathcal{V}_{(1)}=\mathcal{V}$.

\begin{deef}
\label{cantotdef}
A Carnot structure of length $R\in \N=\{0,1,2,\ldots\}$ on $M$ is a subbundle $\mathcal{V}\subseteq TM$ such that
\begin{enumerate}
\item $\mathcal{V}_{(R)}=TM$.
\item $\mathcal{V}_{(k)}$ is a subbundle of $TM$ for any $k$.
\end{enumerate}
\end{deef}

A Carnot structure induces a filtration $(\mathcal{V}_{(k)})_{k=1}^R$ of \emph{vector bundles} of $TM$. In the terminology of \cite{gromov}, a Carnot structure is called an equiregular Carnot structure. We let $\mathcal{W}_j:=\mathcal{V}_{(j)}/\mathcal{V}_{(j-1)}$.

\begin{prop}
For any point $x$ on a Riemannian Carnot manifold $M$ defined from a subbundle $\mathcal{V}$, the operation
\begin{align*}
T_xM\times T_xM\ni &(v_1,\ldots,v_R)\times (w_1,\ldots, w_R)\mapsto \\
&(0,[v_1,w_1]\mod \mathcal{V}_{(1)},\ldots, [v_{R-1},w_{R-1}]\mod \mathcal{V}_{(R-1)})\in \bigoplus \mathcal{W}_i=TM
\end{align*}
induces a Lie algebra structure on $T_xM$.
\end{prop}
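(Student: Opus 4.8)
\emph{Plan.} The operation in the statement is the bracket of the osculating (graded nilpotent) Lie algebra of the Carnot structure at $x$, so the entire content is that it is \emph{well defined} --- independent of the extensions of tangent vectors to vector fields and of the metric used to split $TM$ --- and that it obeys the Lie algebra axioms. I would first reformulate it via lifts. Using the Riemannian metric, identify $\mathcal{W}_k$ with the orthogonal complement of $\mathcal{V}_{(k-1)}$ inside $\mathcal{V}_{(k)}$, so that $T_xM=\bigoplus_k\mathcal{W}_{k,x}$ and a homogeneous element $v_k\in\mathcal{W}_{k,x}$ is in particular a vector in $\mathcal{V}_{(k),x}$. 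By bilinearity it suffices to analyze the operation on a homogeneous pair $v\in\mathcal{W}_{i,x}$, $w\in\mathcal{W}_{j,x}$: extend $v$ to a smooth local section $\tilde v$ of $\mathcal{V}_{(i)}$ and $w$ to a smooth local section $\tilde w$ of $\mathcal{V}_{(j)}$; then $[\tilde v,\tilde w]$ is a local section of $\mathcal{V}_{(i+j)}$, by the filtration property $[\mathcal{V}_{(a)},\mathcal{V}_{(b)}]\subseteq\mathcal{V}_{(a+b)}$ of the sheaves of sections (a consequence of the definition of $\mathcal{V}_{(k)}$ and the Jacobi identity for vector fields), and I set $[v,w]$ to be the class of $[\tilde v,\tilde w](x)$ in $\mathcal{V}_{(i+j),x}/\mathcal{V}_{(i+j-1),x}=\mathcal{W}_{i+j,x}$ (and $0$ when $i+j>R$). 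For a Heisenberg manifold, where $R=2$, this is precisely the displayed formula.

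\emph{Well-definedness.} This is the main point. I must show $[\tilde v,\tilde w](x)\bmod\mathcal{V}_{(i+j-1),x}$ depends only on the classes $v,w$, not on the choices. Two independent changes occur. First, replacing $\tilde v$ by another section $\tilde v'$ of $\mathcal{V}_{(i)}$ with $\tilde v'(x)=\tilde v(x)$: near $x$ write $\tilde v'-\tilde v=\sum_a f_a\xi_a$ with $\xi_a$ local sections of $\mathcal{V}_{(i)}$ and $f_a(x)=0$; then by the Leibniz rule $[\tilde v'-\tilde v,\tilde w]=\sum_a f_a[\xi_a,\tilde w]+\sum_a(\tilde wf_a)\xi_a$, where the first sum vanishes at $x$ and the second is a section of $\mathcal{V}_{(i)}\subseteq\mathcal{V}_{(i+j-1)}$ since $j\ge 1$, so the class in $\mathcal{W}_{i+j,x}$ is unchanged. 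Second, changing the chosen representative vector in $\mathcal{V}_{(i),x}$ of the class $v\in\mathcal{W}_{i,x}$ by an element of $\mathcal{V}_{(i-1),x}$: extend that difference to a local section $\eta$ of $\mathcal{V}_{(i-1)}$; then $[\eta,\tilde w]$ is a section of $\mathcal{V}_{(i-1+j)}=\mathcal{V}_{(i+j-1)}$, so it contributes nothing modulo $\mathcal{V}_{(i+j-1),x}$. Combining the two, the result depends only on $v\in\mathcal{W}_{i,x}$, and symmetrically only on $w$; in particular it is independent of the metric. The only non-formal ingredients are the filtration property above and the fact, from Definition \ref{cantotdef}(2), that each $\mathcal{V}_{(k)}$ is a subbundle, which is what makes the pointwise quotients $\mathcal{W}_{k,x}$ meaningful.

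\emph{Antisymmetry, Jacobi, gradedness.} Antisymmetry is immediate from $[\tilde v,\tilde w]=-[\tilde w,\tilde v]$. For the Jacobi identity, take homogeneous $v\in\mathcal{W}_{i,x}$, $w\in\mathcal{W}_{j,x}$, $u\in\mathcal{W}_{k,x}$; if $i+j+k>R$ all terms vanish, so assume $i+j+k\le R$. By the previous step I may use the section $[\tilde v,\tilde w]$ of $\mathcal{V}_{(i+j)}$ as a representative of $[v,w]\in\mathcal{W}_{i+j,x}$, so $[[v,w],u]=[[\tilde v,\tilde w],\tilde u](x)\bmod\mathcal{V}_{(i+j+k-1),x}$, and likewise for the cyclic permutations. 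Summing and invoking the Jacobi identity $[[\tilde v,\tilde w],\tilde u]+[[\tilde w,\tilde u],\tilde v]+[[\tilde u,\tilde v],\tilde w]=0$ for the Lie bracket of vector fields yields $0$ in $\mathcal{W}_{i+j+k,x}$. Finally $[\mathcal{W}_{i,x},\mathcal{W}_{j,x}]\subseteq\mathcal{W}_{i+j,x}$ by construction, and since $\mathcal{W}_{\ell}=0$ for $\ell>R$ the bracket is nilpotent of step at most $R$; thus $(T_xM,[\cdot,\cdot])$ is a graded nilpotent Lie algebra.

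\emph{Main obstacle.} The delicate step is well-definedness: the naive expression a priori depends on the extensions of $v,w$ to vector fields and on the metric splitting, and one must verify that everything variable lands in $\mathcal{V}_{(i+j-1),x}$. This comes down to the filtration property $[\mathcal{V}_{(a)},\mathcal{V}_{(b)}]\subseteq\mathcal{V}_{(a+b)}$ together with the Leibniz rule producing only an error of lower filtration degree, both of which become routine once the equiregularity hypothesis of Definition \ref{cantotdef} is used to give the pointwise quotients a clean meaning.
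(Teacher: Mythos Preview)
Your argument is correct and follows the standard construction of the osculating graded nilpotent Lie algebra of an equiregular distribution: the key point is indeed well-definedness, and your Leibniz-rule computation together with the filtration property $[\mathcal{V}_{(a)},\mathcal{V}_{(b)}]\subseteq\mathcal{V}_{(a+b)}$ handles it cleanly. The paper itself does not give a proof at all --- it simply states that the result is standard and refers to Bella\"iche's survey \cite{bella96} --- so there is nothing substantive to compare against; your write-up is precisely the argument one finds spelled out in that reference.

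One minor remark: the displayed formula in the proposition is written in a somewhat compressed way that, read literally for general $R$, only records the components $[\mathcal{W}_{k-1},\mathcal{W}_{k-1}]\to\mathcal{W}_k$ rather than the full graded bracket $[\mathcal{W}_i,\mathcal{W}_j]\to\mathcal{W}_{i+j}$. You were right to interpret it as the latter (which is what the cited reference constructs), and to note that for $R=2$ the two agree.
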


The proof of this proposition is standard and can be found for instance in \cite{bella96}.

A Heisenberg structure on a manifold $M$ is given by a hyperplane bundle $H\subseteq TM$. We say that the Heisenberg structure is bracket generating if $H+[H,H]=TM$. A bracket-generating Heisenberg structure induces a Carnot structure of length $2$. We will refer to a Riemannian manifold equipped with a bracket-generating Heisenberg structure as a \emph{sub-Riemannian $H$-manifold}. 

\begin{example}
\label{contactexample}
Let $M$ be a contact manifold with contact form $\eta$, i.e. $M$ is  of dimension $2n-1$ and $\eta$ is a one-form such that $\eta\wedge (\rd\eta)^{n-1}$ is non-degenerate. A direct consequence of this condition is that $\rd \eta$ is non-degenerate on $\ker\eta$. For example, if $M = \partial \Omega$ is the boundary of a strictly pseudo-convex domain $\Omega$ in a complex manifold of complex dimension $n$, $\eta = \rd^c\rho$ for a boundary defining function $\rho$. With the contact structure, a Heisenberg structure can be associated by setting $H:=\ker \eta$. By Cartan's formula, for any vector fields $X,Y$,
\[\rd \eta(X,Y)=X(\eta(Y))-Y(\eta(X))-\eta([X,Y]).\]
In particular, because $\rd \eta$ is non-degenerate on $H$, for any $X\in H$ there is a $Y\in H$ which makes the left hand side non-zero, $\eta([X,Y])\neq 0$. Hence, the Heisenberg structure associated with a contact structure is bracket generating. 
\end{example}

\begin{remark}
\label{leviformremark}
On a general Heisenberg manifold, the role of $\rd \eta$ is played by the Levi form which is a section $\mathcal{L}\in C^\infty(M,H^*\wedge H^*)$ whenever $TM/H$ is orientable. The Levi form is the composition of the form 
$$C^\infty(M,H)\times C^\infty(M,H)\ni (X,Y)\mapsto [X,Y]\!\!\!\!\mod H\in C^\infty(M,TM/H),$$ 
with a trivialization $TM/H\cong M\times \R$. Contact manifolds give rise to the extreme case of $\mathcal{L}$ being non-degenerate. The case $\mathcal{L}\equiv 0$ corresponds to a foliation.
\end{remark}

A theorem by Chow \cite{chow} states that if $M$ is a manifold with a bracket generating sub-bundle $\mathcal{V}\subseteq TM$, then given any $x,y\in M$ there is a smooth path $\gamma:[0,1]\to M$ such that $\gamma(0)=x$, $\gamma(1)=y$ and $\dot{\gamma}(t)\in \mathcal{V}_{\gamma(t)}$ for almost all $t\in [0,1]$. As a consequence, after a choice of Riemannian metric on $\mathcal{V}$, a metric known as the Carnot-Carath\'eodory metric can be defined as follows. For any $x,y,\in M$:
\small
\begin{equation}
\label{dcccddeef}
\rd_{CC}(x,y):=\inf\left\{\left(\int_0^1\|\dot{\gamma}(t)\|_\mathcal{V}^2\rd t\right)^{1/2}:\;\gamma(0)=x,\, \gamma(1)=y\;\mbox{and}\;\dot{\gamma}(t)\in \mathcal{V}_{\gamma(t)}\mbox{  a.e.}\right\}.
\end{equation}
\normalsize
For more details see, e.g. Chapter $1.6$ of \cite{montgomery}.

We now restrict our attention to the case of bracket-generating Heisenberg manifolds. We let $M$ denote a $d+1$-dimensional sub-Riemannian $H$-manifold. A Heisenberg chart is a local chart on $M$ and a choice of $H$-frame over the chart, i.e. a local frame $X_0,X_1,\ldots, X_d$ such that $X_1,\ldots, X_d$ span $H$. Recall from \cite[Chapter 2.1]{ponge}, near any point $u\in M$ there is a particular Heisenberg chart where the coordinate systems are privileged in the sense that as $t\to 0$:
\[\delta_t^* X_i=
\begin{cases}
t^{-1} X_i+O(1), \;& i=1,\ldots, d\\
t^{-2} X_0+O(t^{-1}), \;& i=0,
\end{cases}\]
cf.~\cite[Definition 2.1.10]{ponge}. Such a choice of coordinates is also called $y$-coordinates in the literature, \cite[Chapter 3, \S 11]{bg}. In privileged coordinates, the Carnot-Carath\'eodory metric can be related to the Koranyi gauge, cf. \cite{nswpap} and discussion in \cite[Chapter 2.2]{cadapaaaow}:

\begin{prop}
\label{dccvskorga}
Let $\epsilon_u:U\to \R^{d+1}$ be the coordinates in a priviligied chart centered at $u\in M$ depending smoothly on $u\in U$. There is a neighborhood $V$ of $u$ and a constant $C>0$ such that
\[\frac{1}{C} |\epsilon_u(y)|_H\leq \mathrm{d}_{CC}(u,y)\leq C |\epsilon_u(y)|_H,\quad\forall y,u\in V.\]
\end{prop}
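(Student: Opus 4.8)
The plan is to establish the two-sided bound by comparing both $\rd_{CC}$ and $|\epsilon_u(\cdot)|_H$ with a single intermediate object, namely the local Carnot-Carath\'eodory metric of the model Heisenberg group attached to $T_uM$ via the nilpotentization. First I would record the standard ball-box theorem (Nagel-Stein-Wainger): in privileged coordinates centered at $u$, the $\rd_{CC}$-ball of radius $r$ is comparable, uniformly for small $r$, to the box $\{|z|\lesssim r,\ |t|\lesssim r^2\}$, which is precisely the Koranyi ball $B_H(0,r)$ up to fixed constants. The content of Proposition \ref{dccvskorga} is exactly a metric restatement of this: $\rd_{CC}(u,y)\sim |\epsilon_u(y)|_H$ uniformly in $u,y$ near the basepoint. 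So the proof reduces to (i) the ball-box estimate with constants that can be chosen uniformly in $u$, and (ii) the elementary fact that the Koranyi gauge $|\cdot|_H$ is a quasi-norm adapted to the anisotropic dilations, so that "box of size $r$" and "gauge $\le r$" define the same scale of sets up to constants.

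The steps, in order: (1) Fix a relatively compact neighborhood and invoke the privileged-coordinate normal form from \cite[Chapter 2.1]{ponge}, noting that the frame $X_0,\dots,X_d$ and the osculating Heisenberg structure depend smoothly on $u$, hence all implied constants in the dilation expansions $\delta_t^*X_i = t^{-1}X_i + O(1)$ etc.\ are locally uniform in $u$. (2) Apply Chow's theorem together with the Nagel-Stein-Wainger ball-box theorem \cite{nswpap}: there are $C_1>0$ and $r_0>0$, independent of $u$ in the chosen neighborhood, such that
\[
\mathrm{Box}_u(C_1^{-1}r)\subseteq \{y:\rd_{CC}(u,y)<r\}\subseteq \mathrm{Box}_u(C_1 r),\qquad 0<r<r_0,
\]
where $\mathrm{Box}_u(r)=\{y:|z_j(y)|<r,\ |t(y)|<r^2\}$ in the privileged coordinates $\epsilon_u=(t,z)$. (3) Observe that $|\epsilon_u(y)|_H = (t(y)^2+|z(y)|^4)^{1/4}$ satisfies $|\epsilon_u(y)|_H < r \iff y$ lies in a set trapped between $\mathrm{Box}_u(2^{-1}r)$ and $\mathrm{Box}_u(r)$, so $\mathrm{Box}_u(r)\sim B_H(0,r)$ with absolute constants. (4) Combine: $\rd_{CC}(u,y)<r \iff y$ lies between two gauge-balls of comparable radii, which is equivalent to $|\epsilon_u(y)|_H \sim \rd_{CC}(u,y)$; unwinding the "set" statements into a pointwise inequality gives the constant $C$ in the Proposition, uniform for $y,u\in V$ with $V\subset\subset U$ chosen small enough that all points considered lie in the regime $r<r_0$.

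The main obstacle is not any single inequality — each is classical — but the \emph{uniformity of the constant $C$ in both $u$ and $y$}. The ball-box theorem is usually quoted for a fixed point; here one needs the constants to be stable as the basepoint $u$ varies, which is where the hypothesis that $\epsilon_u$ depends smoothly on $u$ and that everything happens over a compact neighborhood is essential. Concretely I would argue that the vector fields $X_i^u$ expressed in the $u$-centered privileged coordinates, together with their brackets, form a family depending continuously (indeed smoothly) on $u$ in the $C^\infty$ topology on a fixed coordinate ball, so the quantitative constants extracted in the proof of the ball-box theorem (which depend only on finitely many derivatives of the $X_i$ and on the non-degeneracy of the bracket $[X_j,X_k]=L_{jk}X_0 + \text{l.o.t.}$, i.e.\ on a lower bound for the Levi form from Example \ref{contactexample} / Remark \ref{leviformremark}) can be taken uniform by a compactness argument. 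Once uniformity is in place, passing from the comparison of balls to the pointwise comparison of $\rd_{CC}(u,y)$ with $|\epsilon_u(y)|_H$ is routine: if $\rd_{CC}(u,y)=r$ then $y\notin\{d_{CC}(u,\cdot)<r/2\}\supseteq \mathrm{Box}_u(r/(2C_1))$ gives the lower bound $|\epsilon_u(y)|_H\gtrsim r$, and $y\in\{d_{CC}(u,\cdot)<2r\}\subseteq \mathrm{Box}_u(2C_1 r)$ gives the upper bound $|\epsilon_u(y)|_H\lesssim r$.
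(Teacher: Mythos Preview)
Your proposal is correct and follows precisely the route the paper itself indicates: the paper does not give a proof but simply cites \cite{nswpap} and \cite[Chapter 2.2]{cadapaaaow}, i.e.\ the Nagel--Stein--Wainger ball-box theorem in privileged coordinates together with the standard comparison of boxes and Koranyi balls. Your attention to the uniformity of the constants in the basepoint $u$, handled via smooth dependence of the privileged coordinates and compactness, is exactly the point that needs to be made explicit when extracting the proposition from those references.
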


It will be useful to describe the Lipschitz and H\"older spaces for the Carnot-Carath\'eodory metric in terms of the Koranyi gauge. We cover $M$ with Heisenberg charts $(U_j)_{j=1}^N$ such that $(\delta_{1/2}^*U_j)_{j=1}^N$ is a collection of open sets covering $M$. Choose a partition of unity $(\chi_j)_{j=1}^N$ subordinate to $(\delta_{1/2}^*U_j)_{j=1}^N$ with $\chi_j$ compactly supported in $U_j$. For small enough $c>0$ we can define the following functions on $M\times M$:
\begin{align*}
\ell_{i,j}^K(x,y)&:=\chi_i(x)\chi_j(y) \cdot\min(|\epsilon_i(x)^{-1}\epsilon_i(y)|_H,c)\quad \mbox{and}\\
\ell_{i,j}^{an}(x,y)&:=\chi_i(x)\chi_j(y)\cdot \min(|\epsilon_i(x)-\epsilon_i(y)|_H,c).
\end{align*}
The indices $K$ and $an$ stand for Koranyi and anisotropic, respectively. We define the quasi-metrics:
\begin{align*}
\rd_K(x,y)&:=\sum_{i,j=1}^N\ell_{i,j}^K(x,y)+\ell_{j,i}^K(y,x)\quad \mbox{and}\\
\rd_{an}(x,y)&:=\sum_{i,j=1}^N\ell_{i,j}^{an}(x,y)+\ell_{j,i}^{an}(y,x).
\end{align*}

\begin{lem}
\label{equivalencesofmetrics}
Let $M$ be a closed sub-Riemannian $H$-manifold. The Carnot-Carath\'eodory metric is equivalent to the anisotropic quasi-metric $\rd_{an}$ and the Koranyi quasi-metric $\rd_K$.
\end{lem}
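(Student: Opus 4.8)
The plan is to prove equivalence locally on each chart and then patch together using the partition of unity. First I would reduce to a single Heisenberg chart $U_j$: because the $(\delta_{1/2}^*U_j)$ cover $M$ and the $\chi_j$ form a subordinate partition of unity, two points $x,y$ with $\rd_{CC}(x,y)$ small enough lie in a common chart $U_i$ on which some $\chi_i$ is bounded below near the relevant region, so it suffices to compare the three quantities when both points lie in a fixed privileged chart and are close. For points that are not close (bounded below in $\rd_{CC}$), all three quasi-metrics are bounded above and below by positive constants because of the cutoffs $\min(\,\cdot\,,c)$ and compactness of $M$, so that regime is trivial; I would dispose of it first.

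Next, the comparison of $\rd_{CC}$ with $\rd_K$ on a fixed chart is exactly the content of Proposition \ref{dccvskorga}: in a privileged chart centered at $u$, one has $\tfrac1C|\epsilon_u(y)|_H\le \rd_{CC}(u,y)\le C|\epsilon_u(y)|_H$ on a neighborhood $V$ of $u$. The only subtlety is that $\rd_K$ is built from $|\epsilon_i(x)^{-1}\epsilon_i(y)|_H$, i.e. the Koranyi gauge of the group difference rather than $|\epsilon_i(y)|_H$ relative to a chart centered at $x$; so I would first note that the privileged coordinate systems $\epsilon_u$ depend smoothly on the base point $u$ and are compatible with the group structure up to smooth, boundedly-invertible changes of coordinates, hence $|\epsilon_i(x)^{-1}\epsilon_i(y)|_H \sim |\epsilon_x(y)|_H$ uniformly for $x,y$ in a compact region of a chart. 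Combining this with Proposition \ref{dccvskorga} gives $\rd_{CC}\sim \rd_K$ locally, and summing over $i,j$ against the partition of unity (each term is comparable to $\rd_{CC}$ where $\chi_i(x)\chi_j(y)\ne 0$, and the finite sum stays comparable) yields $\rd_{CC}\sim \rd_K$ globally.

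Finally, to compare $\rd_K$ with $\rd_{an}$ I would compare the Koranyi gauge of the group product $|\epsilon_i(x)^{-1}\epsilon_i(y)|_H$ with the Euclidean-type anisotropic gauge $|\epsilon_i(x)-\epsilon_i(y)|_H$. Writing $p=\epsilon_i(x)$, $q=\epsilon_i(y)$ in coordinates $(t,z)$, the group law gives $p^{-1}\cdot q = (t_q-t_p-\tfrac12 L(z_p,z_q-z_p)+\dots,\ z_q-z_p)$ (plus the analogous higher-step corrections for a general Carnot structure of length $2$), so the two gauges differ only by terms that are products of lower-order coordinate differences; a homogeneity/scaling argument with the dilations $\delta_\lambda$ shows $|p^{-1}q|_H \sim |p-q|_H$ uniformly on compacts once $c$ is chosen small. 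This gives $\rd_K\sim \rd_{an}$, and transitivity of $\sim$ closes the argument.

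The main obstacle I expect is the bookkeeping in the second step: making precise and uniform the statement that privileged coordinates centered at different points are comparable, i.e. that $|\epsilon_i(x)^{-1}\epsilon_i(y)|_H \sim |\epsilon_x(y)|_H$ with constants uniform in $x$, since Proposition \ref{dccvskorga} is phrased with a \emph{fixed} center $u$ and a neighborhood $V$ depending on $u$, and one must use compactness of $M$ together with the smooth dependence of $\epsilon_u$ on $u$ to extract uniform constants. The third step (group gauge versus Euclidean gauge) is a routine homogeneity estimate once the first two are in place.
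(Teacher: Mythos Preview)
Your proposal is correct and follows the same overall architecture as the paper (localize to a chart, handle the far-apart regime trivially, then compare the gauges), but the execution differs in a useful way.

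The paper's proof is considerably shorter: after localizing, it simply observes that the ratios $\rd_{CC}(x,y)/|x-y|_H$, $|x-y|_H/|xy^{-1}|_H$, and their reciprocals are homogeneous of degree~$0$ under the parabolic dilation, hence bounded on a suitable Koranyi sphere, hence bounded everywhere by compactness. This single homogeneity-and-compactness trick handles both the $\rd_{CC}\sim\rd_{an}$ and the $\rd_{an}\sim\rd_K$ comparisons at once. Your third step already uses exactly this argument for $\rd_K\sim\rd_{an}$; the point is that the paper applies the same device to the comparison with $\rd_{CC}$ as well, which lets it bypass Proposition~\ref{dccvskorga} and, in particular, the bookkeeping you flagged as the main obstacle (relating $|\epsilon_i(x)^{-1}\epsilon_i(y)|_H$ to $|\epsilon_x(y)|_H$ uniformly in the center). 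Of course, the homogeneity of $\rd_{CC}$ is literally true only on the model group, so the paper is implicitly invoking the NSW-type comparison that underlies Proposition~\ref{dccvskorga}; your route makes that dependence explicit, at the cost of the extra coordinate-change step. Either approach works; the paper's is faster, yours is more transparent about what is being used.
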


\begin{proof}
It suffices to prove the assertion in a coordinate neighborhood. We fix $x\in M$. The function $y\mapsto \rd_{CC}(x,y)/|x-y|_H$ is well defined for $y\neq x$ and homogeneous of degree $0$ with respect to the usual action. Similarly, $y\mapsto |x-y|_H/\rd_{CC}(x,y)$ is well defined for $y\neq x$ and homogeneous of degree $0$. Similarly, $|x-y|_H/|xy^{-1}|_H$ and $|xy^{-1}|_H/|x-y|_H$ are homogeneous of degree $0$. A compactness argument, using suitable spheres, guarantees boundedness of these functions. Since $M$ is compact, the assertion follows.
\end{proof}

\subsection{Function spaces}

\begin{deef}[CC-H\"older and CC-Lipschitz continuous functions]
We define the space of CC-Lipschitz functions $\textnormal{Lip}_{CC}(M)$ as the linear space of all functions $f\in L^\infty(M)$ satisfying
\[|f|_{\textnormal{Lip}_{CC}(M)}:=\sup_{x\neq y}\frac{|f(x)-f(y)|}{\rd_{CC}(x,y)}<\infty.\]
Similarly, the space of CC-H\"older continuous functions $C^\alpha_{CC}(M)$ of exponent $\alpha\in (0,1)$ consists of all $f\in L^\infty(M)$ such that
\[|f|_{C^\alpha_{CC}(M)}:=\sup_{x\neq y}\frac{|f(x)-f(y)|}{\rd_{CC}(x,y)^\alpha}<\infty.\]
\end{deef}

The obvious examples of elements of $\textnormal{Lip}_{CC}(M)$ are of course elements of $C^1(M)$, and the inclusion $C^\alpha(M)\subseteq C^\alpha_{CC}(M)$ is continuous. A standard computation shows that $\textnormal{Lip}_{CC}(M)$ is a Banach algebra in the norm
\[\|f\|_{\textnormal{Lip}_{CC}(M)}:=\|f\|_{L^\infty(M)}+|f|_{\textnormal{Lip}_{CC}(M)}\ .\]
Similarly, $C^\alpha_{CC}(M)$ is a Banach algebra in the norm
\[\|f\|_{C^\alpha_{CC}(M)}:=\|f\|_{L^\infty(M)}+|f|_{C^\alpha_{CC}(M)}\ .\]

Recall that we write $C^1_{CC}(M)=\textnormal{Lip}_{CC}(M)$.

\begin{lem}
Let $M$ be a closed Riemannian manifold with a Carnot structure defined from the subbundle $\mathcal{V}\subseteq TM$. For a function $f\in L^\infty(M)$ the following are equivalent:
\begin{enumerate}
\item $f\in \textnormal{Lip}_{CC}(M)$.
\item There is a constant $C>0$ such that for any point $x_0\in M$ and in any ON-frame $(X_j)_{j=1}^{d}$ of $\mathcal{V}$ around the point $x_0$, $\|X_jf\|_{L^\infty}\leq C$ for $j=1,\ldots, d$.
\end{enumerate}
Furthermore, the semi-norm $|\cdot|_{\textnormal{Lip}_{CC}(M)}$ is equivalent to the semi-norm
\[|f|'_{\textnormal{Lip}_{CC}(M)}:=\sup \|X_jf\|_{L^\infty}\]
where the supremum is taken over all points $x_0\in M$, all ON-frames $(X_j)_{j=1}^{d}$ of $\mathcal{V}$ around $x_0$ and all $j=1,\ldots, d$.
\end{lem}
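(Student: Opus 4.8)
The plan is to prove the two implications separately inside a single Heisenberg coordinate chart; the passage to all of $M$ is then handled by a partition of unity together with the fact (cf.\ Lemma~\ref{equivalencesofmetrics} and Proposition~\ref{dccvskorga}) that the restriction of $\rd_{CC}$ to curves remaining in a chart is comparable to the ambient $\rd_{CC}$ on a slightly smaller neighbourhood. Since both $(2)$ and the seminorm $|\cdot|'_{\textnormal{Lip}_{CC}(M)}$ are manifestly local, this reduction costs only a fixed constant, and it suffices to compare $|f|_{\textnormal{Lip}_{CC}(M)}$ with $|f|'_{\textnormal{Lip}_{CC}(M)}$.

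For $(1)\Rightarrow(2)$, together with $|f|'_{\textnormal{Lip}_{CC}(M)}\le|f|_{\textnormal{Lip}_{CC}(M)}$, I would fix $x_0$, an ON-frame $(X_j)_{j=1}^d$ of $\mathcal V$ near $x_0$, and an index $j$. Because $|X_j|_{\mathcal V}\equiv1$, the flow line $s\mapsto\exp(sX_j)(x)$ is a horizontal curve of unit speed, so $\rd_{CC}\big(x,\exp(sX_j)(x)\big)\le|s|$ for $|s|$ small. Hence for any smooth density $\psi$ the difference quotients $\int s^{-1}\big(f(\exp(sX_j)(x))-f(x)\big)\psi(x)\,\rd x$ are bounded in modulus by $|f|_{\textnormal{Lip}_{CC}(M)}\|\psi\|_{L^1}$, and letting $s\to0$ identifies the limit with the pairing of the distribution $X_jf$ against $\psi$. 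Thus $X_jf\in L^\infty$ with $\|X_jf\|_{L^\infty}\le|f|_{\textnormal{Lip}_{CC}(M)}$, and taking the supremum over $x_0$, the frame and $j$ yields $(2)$ and the seminorm bound.

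For $(2)\Rightarrow(1)$, together with $|f|_{\textnormal{Lip}_{CC}(M)}\lesssim|f|'_{\textnormal{Lip}_{CC}(M)}$, I would first mollify in a chart. Writing $X_j=\sum_k b_{jk}\partial_k$ with smooth $b_{jk}$ and $f_\epsilon:=f*\phi_\epsilon$, a standard commutator computation (integrating by parts and using that $b_{jk}$ is Lipschitz) gives $X_jf_\epsilon=(X_jf)*\phi_\epsilon+r_{j,\epsilon}$ with $\sup_\epsilon\|r_{j,\epsilon}\|_{L^\infty}<\infty$; in particular the smooth functions $f_\epsilon$ have uniformly bounded horizontal derivatives. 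For a smooth function the fundamental theorem of calculus along a horizontal curve $\gamma$ — choosing at each time an ON-frame whose first vector points along $\dot\gamma(t)$ — shows $|g(x)-g(y)|\le\big(\sup_j\|X_jg\|_{L^\infty}\big)\,\ell(\gamma)$, so the $f_\epsilon$ are uniformly $\rd_{CC}$-Lipschitz and converge locally uniformly to a continuous representative of $f$. Once $f$ is continuous one recovers the sharp constant: by Fubini, $X_jf\in L^\infty$ implies that along \emph{almost every} integral curve of a fixed unit horizontal vector field $f$ is Lipschitz with constant $\le|f|'_{\textnormal{Lip}_{CC}(M)}$, and by continuity this holds along \emph{every} such curve; approximating an arbitrary smooth horizontal curve from $x$ to $y$ by concatenations of unit-speed integral-curve segments and passing to the limit gives $|f(x)-f(y)|\le|f|'_{\textnormal{Lip}_{CC}(M)}\,\rd_{CC}(x,y)$.

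Chaining these inequalities proves the equivalence of $(1)$ and $(2)$ and of the two seminorms. The main obstacle I anticipate is the measure-theoretic point in $(2)\Rightarrow(1)$: the weak horizontal derivatives are defined only almost everywhere, whereas the horizontal curves defining $\rd_{CC}$ form a null set, so one must pass between the two. Mollification disposes cleanly of the resulting continuity question, but turning ``$X_jf\in L^\infty$'' into a genuine Lipschitz bound along every horizontal curve — via the Fubini argument over foliations by integral curves and the approximation of general horizontal curves by such — is the substantive step; it is standard in sub-Riemannian geometry (cf.~\cite{montgomery}), but it is where the real work lies.
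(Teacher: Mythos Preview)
Your proposal is correct and, in fact, more scrupulous than the paper's own proof. The essential difference lies in how $(2)\Rightarrow(1)$ is handled: the paper simply writes
\[
|f(x)-f(y)|\le \int_0^1 |\rd f(\gamma(t))\cdot\dot\gamma(t)|\,\rd t \le (\dim\mathcal V)^{1/2}\,|f|'_{\textnormal{Lip}_{CC}(M)}\int_0^1\|\dot\gamma(t)\|_{\mathcal V}\,\rd t
\]
for an arbitrary horizontal curve $\gamma$, tacitly assuming that the fundamental theorem of calculus applies along $\gamma$ once the weak horizontal derivatives are bounded. You correctly identify this as the substantive analytic point and resolve it via mollification (Friedrichs' commutator lemma to control $X_j(f*\phi_\epsilon)-(X_jf)*\phi_\epsilon$ in $L^\infty$) followed by a Fubini/continuity argument to recover the sharp constant. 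For $(1)\Rightarrow(2)$ the paper instead differentiates along a single horizontal curve with initial velocity $\sum_j X_j(f)\,X_j$, whereas you use distributional pairings against test densities and the flow of $X_j$; both yield the bound $\|X_jf\|_{L^\infty}\le|f|_{\textnormal{Lip}_{CC}(M)}$.

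What each approach buys: the paper's argument is a two-line formal computation that gives explicit constants $(\dim\mathcal V)^{\pm 1/2}$ but leaves the null-set issue unaddressed; your argument is longer but closes that gap, and the mollification step is exactly the right tool. One minor point to watch: the Friedrichs remainder $r_{j,\epsilon}$ is controlled by $\|f\|_{L^\infty}$ rather than by $|f|'_{\textnormal{Lip}_{CC}(M)}$ alone, so the mollification step establishes \emph{equivalence of norms} rather than of seminorms; your second pass via the Fubini argument then recovers the pure seminorm inequality with the sharp constant, as you indicate.
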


\begin{proof}
Assume that $x,y\in M$ and that $|f|'_{\textnormal{Lip}_{CC}(M)}<\infty$. For any horizontal curve $\gamma:[0,1]\to M$ with endpoints $x$ and $y$,
\[|f(x)-f(y)|\leq \int_0^1 |\rd f(\gamma(t)).\dot{\gamma}(t)|\rd t\leq (\dim \mathcal{V})^{1/2}|f|'_{\textnormal{Lip}_{CC}(M)}\int_0^1 \|\dot{\gamma}(t)\|_\mathcal{V}\rd t.\]
Hence, $|f|_{\textnormal{Lip}_{CC}(M)}\leq (\dim \mathcal{V})^{1/2} |f|'_{\textnormal{Lip}_{CC}(M)}$.

Conversely, assume $f\in \textnormal{Lip}_{CC}(M)$. By restricting to horizontal curves, it can be shown that $Xf\in L^\infty_{loc}$ for any horizontal vector field $X$. Take a point $x_0\in M$ and an $ON$-frame $(X_j)_{j=1}^{d}$ of $\mathcal{V}$ around $x_0$. Let $\gamma:[0,1]\to M$ be a horizontal curve such that $\gamma(0)=x_0$ and $\dot{\gamma}(0)=\sum_{j=1}^d X_j(f)X_j$. Let $L(\epsilon):=\int_0^\epsilon \|\dot{\gamma}(t)\|^2_\mathcal{V}\rd t$. In local coordinates one verifies the identity
\[\lim_{\epsilon\to 0}\frac{|f(\gamma(\epsilon))-f(x_0)|}{L(\epsilon)}=\left\|\sum_{j=1}^dX_j(f)(x_0)X_j(x_0)\right\|_\mathcal{V}.\]
As a consequence,
$$\left\|\sum_{j=1}^dX_j(f)(x_0)X_j(x_0)\right\|_\mathcal{V}\leq |f|_{\textnormal{Lip}_{CC}(M)},$$
or $|f|_{\textnormal{Lip}_{CC}(M)}\geq (\dim \mathcal{V})^{-1/2} |f|'_{\textnormal{Lip}_{CC}(M)}$.
\end{proof}

\begin{prop}
For $s\in (0,1]$, the semi-norm $|\cdot|_{C^s_{CC}(M)}$ is equivalent to both of the following semi-norms:
\[|f|_{C^\alpha_{CC}(M),K}:=\sup\left\{\frac{|f(x)-f(y)|}{|xy^{-1}|_H^\alpha}\bigg|\; x\neq y, \; x,y \in U\right\},\]
\[|f|_{C^\alpha_{CC}(M),an}:=\sup\left\{\frac{|f(x)-f(y)|}{|x-y|_H^\alpha}\bigg|\; x\neq y, \; x,y \in U\right\}.\]
\end{prop}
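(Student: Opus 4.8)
The plan is to reduce the claimed equivalences to the local comparison of $\mathrm{d}_{CC}$ with the Koranyi gauge inside a privileged chart --- which is exactly Proposition~\ref{dccvskorga} --- together with a short chaining argument to handle pairs of points lying in no common chart. First I would fix a privileged chart $U$ with coordinate map $\epsilon$ and record that on $U$ the three quantities $\mathrm{d}_{CC}(x,y)$, $|\epsilon(x)-\epsilon(y)|_H$ and $|\epsilon(x)^{-1}\epsilon(y)|_H$ are pairwise equivalent: fixing $x$, each of the relevant ratios is homogeneous of degree $0$ for the dilations, so boundedness follows from the compactness argument already used in the proof of Lemma~\ref{equivalencesofmetrics}, the equivalence $\mathrm{d}_{CC}\sim|\epsilon(\cdot)-\epsilon(\cdot)|_H$ being Proposition~\ref{dccvskorga} near the diagonal and a degree-$0$ homogeneity statement away from it. It then suffices to treat, say, the $\mathrm{an}$--seminorm, the $K$--case being identical.

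Granting this, the inequality $|f|_{C^\alpha_{CC}(M),\mathrm{an}}\lesssim|f|_{C^\alpha_{CC}(M)}$ is immediate, since for $x\ne y$ in a chart $U$ one has $\mathrm{d}_{CC}(x,y)\le C|\epsilon(x)-\epsilon(y)|_H$ uniformly. For the reverse inequality I would fix $x\ne y$ and a small enough $\delta_0>0$ which is a Lebesgue number of the (refined) cover for $\mathrm{d}_{CC}$ and small enough that a $\mathrm{d}_{CC}$--ball of radius $\delta_0$ lies inside a chart to which Proposition~\ref{dccvskorga} applies. If $\mathrm{d}_{CC}(x,y)<\delta_0$, then $x,y$ lie in a common chart and $|\epsilon(x)-\epsilon(y)|_H\le C\,\mathrm{d}_{CC}(x,y)$, so $|f(x)-f(y)|\le|f|_{C^\alpha_{CC}(M),\mathrm{an}}|\epsilon(x)-\epsilon(y)|_H^\alpha\le C^\alpha|f|_{C^\alpha_{CC}(M),\mathrm{an}}\mathrm{d}_{CC}(x,y)^\alpha$. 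If $\mathrm{d}_{CC}(x,y)\ge\delta_0$, I would pick a near-minimizing horizontal path from $x$ to $y$, subdivide it into a number $N_0$ of arcs of $\mathrm{d}_{CC}$--length $<\delta_0/2$ with $N_0$ bounded in terms of $\delta_0$ and the (finite) diameter $\mathrm{diam}_{CC}(M)$, obtaining points $x=p_0,\dots,p_{N_0}=y$ with consecutive ones in a common chart, so that the near-diagonal bound applies to each pair and
\begin{align*}
|f(x)-f(y)|&\le\sum_k|f(p_k)-f(p_{k+1})|\lesssim|f|_{C^\alpha_{CC}(M),\mathrm{an}}\sum_k\mathrm{d}_{CC}(p_k,p_{k+1})^\alpha\\
&\le N_0\,\mathrm{diam}_{CC}(M)^\alpha\,|f|_{C^\alpha_{CC}(M),\mathrm{an}}\lesssim|f|_{C^\alpha_{CC}(M),\mathrm{an}}\,\mathrm{d}_{CC}(x,y)^\alpha,
\end{align*}
where the last step uses $\mathrm{d}_{CC}(x,y)\ge\delta_0$. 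Taking suprema over $x\ne y$ then yields $|f|_{C^\alpha_{CC}(M)}\lesssim|f|_{C^\alpha_{CC}(M),\mathrm{an}}$.

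The step I expect to be the real obstacle --- and the only place where the \emph{privileged} nature of the coordinates is genuinely used --- is ensuring that all of the constants above can be chosen uniformly over the finite atlas. Comparing the anisotropic Koranyi gauges read off from two overlapping privileged charts is not a formal consequence of smoothness of the transition map; it relies on the fact that a change of privileged coordinates is, to the order relevant for the gauge, compatible with the dilations, which is precisely what makes the constant in Proposition~\ref{dccvskorga} locally uniform in the centre $u$. Once this uniformity is in hand, the chart-by-chart estimates combine with a single constant and the proof concludes. The restriction $s=\alpha\in(0,1]$ is the natural range here: for $\alpha>1$ the only functions with finite seminorm on a connected manifold are the constants, so the statement is vacuous there.
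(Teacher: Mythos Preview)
Your argument is correct and follows essentially the same route as the paper, which simply records that the proposition is a direct consequence of Lemma~\ref{equivalencesofmetrics}. Your chaining argument for pairs not lying in a common chart and your discussion of uniformity over the finite atlas make explicit what the paper leaves implicit in that one-line citation.
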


This proposition is a direct consequence of Lemma \ref{equivalencesofmetrics}.

\subsection{The Heisenberg calculus}
\label{subsectionschatten}

We briefly recall some relevant facts regarding the Heisenberg calculus. For a more detailed account, we refer to \cite{bg,ponge,taylorncom}. Closely related calculi are considered in \cite{christ, melin}. We follow the notations of \cite{ponge}. As above, let $M$ denote a $d+1$-dimensional closed sub-Riemannian $H$-manifold and $U$ a Heisenberg coordinate system on $M$ with Heisenberg frame $X_0,\ldots, X_d$. For a multi-index $\alpha=(\alpha_0,\alpha_1,\ldots, \alpha_d)\in \N^{d+1}$ we set $\langle \alpha\rangle:=2\alpha_0+\sum _{i=1}^d \alpha_i$. Also, recall the $\R_+$-action on $\R^{d+1}$ from Equation \eqref{actionr}, page \pageref{actionr}. The class of symbols relevant for the Heisenberg calculus is defined as follows.

\begin{deef}[Definition $3.1.4$ and $3.1.5$ of \cite{ponge}]
For $m\in \C$, the space $S_m(U\times \R^{d+1})$ of symbols homogeneous of degree $m$ is defined as the space of symbols $p\in C^\infty(U\times \R^{d+1}\setminus \{0\})$ that are homogeneous of degree $m$:
\[p(x,\lambda\cdot \xi)=\lambda^mp(x,\xi)\quad\forall\lambda>0,\;(x,\xi)\in U\times \R^{d+1}.\]
The space $S^m(U\times \R^{d+1})$ of symbols of degree $m$ is defined as the space of those $p\in C^\infty(U\times \R^{d+1})$ that admit a $1$-step polyhomogeneous asymptotic expansion; i.e.~there exists $p_k\in S_{m-k}(U\times \R^{d+1})$ for $k\in \N$ such that for any $N\in \N$ and $K\subseteq U$ compact
\[\left| \partial_x^\alpha\partial_\xi^\beta\left(p-\sum_{k=0}^Np_k\right)(x,\xi)\right|\leq C_{\alpha,\beta,K,N}|\xi|_H^{\Re(m)-\langle \beta\rangle-N},\quad\forall x\in K, \;|\xi|_H\geq 1,\]
for some constant $C_{\alpha,\beta,K,N}>0$.\\
\end{deef}

The frame $X_0,X_1,\ldots, X_d$ gives rise to functions $\sigma_j(x,\xi):=\sigma(X_j)$ -- their classical symbols as differential operators. We let $\sigma:=(\sigma_0,\sigma_1,\ldots,\sigma_d):U\times\R^{d+1}\to \R^{d+1}$ denote the vector of these symbols. Associated with a symbol $p\in S^m(U\times \R^{d+1})$ there is an operator $P:=p(x,-iX):C^\infty_c(U)\to C^\infty(U)$ defined by
\[Pf(x):=(2\pi)^{-d-1}\int \e^{ix\cdot \xi}p(x,\sigma(x,\xi))\hat{f}(\xi)\rd \xi.\]
A $\Psi_H$DO of order $m\in \C$ on $U$ is an operator of the form $p(x,-iX)+R$ for a $p\in S^m(U\times \R^{d+1})$ and an integral operator $R$ with smooth integral kernel. We write $R \in \Psi_H^{-\infty}(U)=\Psi^{-\infty}(U)$ and $p(x,-iX)+R \in \Psi_H^{m}(U)$. Whenever $P_1$ and $P_2$ are two $\Psi_H$DOs of order $m_1$ and $m_2$, respectively, and at least one of them is properly supported, the composition $P_1P_2:C^\infty_c(U)\to C^\infty(U)$ is again a $\Psi_H$DO of order $m_1+m_2$ whose $1$-step polyhomogeneous asymptotic expansion can be expressed by means of the fiberwise convolution product on the bundle of Lie groups $TU\to U$. This is proven in \cite[Theorem 4.7]{bg}. A symbol $p\in S^m(U\times \R^{d+1})$ is called $H$-elliptic, if there is a $\Psi_H$DO $Q$ of order $-m$ such that $p_0(x,-iX)Q-1$ and $Qp_0(x,-iX)-1$ are both $\Psi_H$DOs of order $-1$.

A consequence of the product formula in \cite[Theorem 4.7]{bg} is the following proposition.

\begin{prop}
\label{smoothcom}
Assume that $P \in \Psi_H^{m}(U)$ and $f\in C^\infty(U)$. Then $[P,f] \in \Psi_H^{m-1}(U)$.
\end{prop}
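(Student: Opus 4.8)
The plan is to reduce the statement to the composition formula for the Heisenberg calculus, Theorem 4.7 of \cite{bg}, which is quoted just above. First I would note that multiplication by $f\in C^\infty(U)$ is itself a $\Psi_H$DO of order $0$ on $U$: it is the operator attached to the symbol $p(x,\xi)=f(x)$, which is (trivially) a classical symbol of Heisenberg order $0$ since it is independent of $\xi$. Consequently both $Pf$ and $fP$ are $\Psi_H$DOs of order $m+0=m$ by the composition theorem (at least one factor, namely $P$ when $U$ is the whole chart, can be arranged to be properly supported, or one works modulo $\Psi^{-\infty}$). The content of the proposition is therefore the gain of one order in the \emph{difference} $Pf-fP$: the principal symbols cancel.

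The key step is to inspect the leading term of the asymptotic expansion of the fiberwise-convolution product that computes the symbol of a composition. For two symbols $p\in S^{m_1}$, $q\in S^{m_2}$, the symbol of the composite has an expansion whose top-order term (Heisenberg order $m_1+m_2$) is the fiberwise convolution of the principal symbols $p_0$ and $q_0$ on the osculating group $T_xU$, and the subleading terms have Heisenberg order $m_1+m_2-1$ or lower. When $q(x,\xi)=f(x)$ is $\xi$-independent, its associated convolution kernel on each fiber is $f(x)$ times a Dirac mass at the origin, so the fiberwise convolution is just pointwise multiplication: the principal symbol of $Pf$ is $f(x)p_0(x,\xi)$, and likewise the principal symbol of $fP$ is $f(x)p_0(x,\xi)$. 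Hence the principal symbols of $Pf$ and $fP$ agree, so their difference has vanishing principal symbol, i.e.\ lies in $\Psi_H^{m-1}(U)$.

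The main obstacle — really the only point requiring care — is to make precise that the principal symbol of $fP$, with $f$ on the \emph{left}, is also $f(x)p_0(x,\xi)$: in the Heisenberg product formula the roles of the two factors are not symmetric (the group is non-abelian), so one must check that multiplying on the left by a $\xi$-independent symbol again acts as pointwise multiplication at the level of principal symbols, which follows because convolution with a Dirac mass at the identity is the identity on both sides. An alternative, essentially equivalent route that avoids invoking the full product formula: differentiate the kernel identity $[P,f]u = \int (K_P(x,y))(f(y)-f(x))u(y)\,dy$ where $K_P$ is the Schwartz kernel of $P$; the extra factor $f(y)-f(x)$, which vanishes at $y=x$ to first order in the Riemannian sense and hence to \emph{Heisenberg} order one (this is exactly Proposition \ref{dccvskorga}, comparing $\rd_{CC}$ with the Koranyi gauge), improves the singularity of the kernel along the diagonal by one Heisenberg degree, giving an operator of order $m-1$. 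Either way, the proof is short and the crux is the cancellation of principal symbols.
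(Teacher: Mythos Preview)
Your proposal is correct and takes essentially the same approach as the paper: the paper simply states that the proposition is ``a consequence of the product formula in \cite[Theorem 4.7]{bg}'' without further detail, and your argument spells out precisely why --- the principal symbol of multiplication by $f$ is the $\xi$-independent function $f(x)$, whose model operator is central, so the fiberwise-convolution principal symbols of $Pf$ and $fP$ coincide and the commutator drops one Heisenberg order. One small quibble: in your alternative kernel argument the citation of Proposition~\ref{dccvskorga} is slightly off (that proposition compares $\rd_{CC}$ to the Koranyi gauge, whereas what you actually need is the Taylor expansion of $f(y)-f(x)$ in privileged coordinates to see that each term has Heisenberg weight $\geq 1$), but since your main route via the product formula is complete this does not affect the validity of the proof.
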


Since the fiber wise convolution product is non-commutative, the analogous assertion fails when $f \in C^\infty(U)$ is replaced by a general operator $Q \in \Psi_H^0(U)$.

$\Psi_H$DOs can also be characterized by kernel estimates. We define $S'_{\textnormal{reg}}(\R^{d+1})$ as the space of tempered distributions that are regular outside of the origin $0\in \R^{d+1}$, see \cite[Definition 3.1.10]{ponge}. We equip the space $S'_{\textnormal{reg}}(\R^{d+1})$ with the smallest topology making the natural mapping $S'_{\textnormal{reg}}(\R^{d+1})\to S'(\R^{d+1})\oplus C^\infty(\R^{d+1}\setminus\{0\})$ continuous. The following definition was originally due to Beals-Greiner, see \cite[Chapter 3, \S15]{bg}. We follow the notations of \cite{ponge}.

\begin{deef}[Definition $3.1.11$ and $3.1.13$ of \cite{ponge}]
\label{kerneldeff}
The space $\mathcal{K}_m(U\times \R^{d+1})\subseteq C^\infty(U)\hat{\otimes}S'_{\textnormal{reg}}(\R^{d+1})$ of homogeneous distributions of degree $m$ consists of those $K=K(x,z)\in C^\infty(U)\hat{\otimes}S'_{\textnormal{reg}}(\R^{d+1})$ such that
\begin{equation}
\label{homdist}
K(x,\lambda z)=\lambda^mK(x,z)+\lambda^m\log\lambda\sum_{\langle \alpha\rangle=m} c_\alpha(x)z^\alpha,
\end{equation}
for some functions $c_\alpha\in C^\infty(U)$. The space $\mathcal{K}^m(U\times \R^{d+1})$ of kernels of degree $m$ is defined as the space of $K\in \mathcal{D}'(U\times\R^{d+1})$ such that for any $N\in\N$, there are $K_l\in \mathcal{K}_{m-l}(U\times \R^{d+1})$, $l\in \N$, with
\[K-\sum_{l=0}^J K_j\in C^N(U\times \R^{d+1})\]
for a large enough $J$.
\end{deef}

\begin{remark}
Logarithmic terms can appear in \eqref{homdist} only when $m\in \N$.
\end{remark}

\begin{remark}
\label{PsiDOprop}
By \cite[Proposition 3.1.16]{ponge}, there is a one-to-one correspondence between $\Psi_H$DOs modulo smoothing operators and kernels as in Definition \ref{kerneldeff} modulo smooth kernels. Differential operators correspond to kernels supported in $z=0$. More precisely, if $P \in  \Psi_H^{m}(U)$, there is a $K_P\in \mathcal{K}^{-m-d-2}(U\times \R^{d+1})$ and a smoothing $R\in C^\infty(U\times U)$ such that the Schwartz kernel $k_P$ of $P$ can be written as
\[k_P(x,y)=|\epsilon'_x|K_P(x,-\epsilon_x(y))+R(x,y),\]
where $\epsilon_x$ denotes privileged coordinates which depends smoothly on $x$. The homogeneous expansion of the kernel $K_P$ guarantees that for any horizontal vector field $X$, the estimates
\begin{align*}
|K_P(x,-\epsilon_x(y))| &\lesssim \rd_{CC}(x,y)^{-(d+1+m)}\\
|X_{x} K_P(x,-\epsilon_x(y))| &\lesssim \rd_{CC}(x,y)^{-(d+2+m)}\\
|X_{y} K_P(x,-\epsilon_x(y))| &\lesssim \rd_{CC}(x,y)^{-(d+2+m)},
\end{align*}
are true for $x \neq y$ uniformly away from $\partial U$. Here $X_x$ and $X_y$ denote differentiation along $X$ in the $x$-variable and the $y$-variable, respectively. These properties are inherited by $k_P$. 
\end{remark}

Furthermore, by \cite[Proposition 3.1.18]{ponge}, the algebra of $\Psi_H$DOs is invariant under changes of Heisenberg charts, so we may define an algebra of $\Psi_H$DOs on a manifold. We let $\Psi_H^m(M)$ denote the space of Heisenberg operators of order $m$ on the sub-Riemannian $H$-manifold $M$. We summarize the main properties of Heisenberg operators in a theorem \cite{bg,ponge}.

\begin{thm}
\label{l2andwhatnot}
Let $M$ be a sub-Riemannian $H$-manifold. Then $\Psi^*_H(M):=\cup_{m\in \Z} \Psi^m_H(M)$ is a filtered algebra closed under formal adjoint. If $T\in \Psi^0_H(M)$, then $T$ extends to a bounded operator on $L^2(M)$.
\end{thm}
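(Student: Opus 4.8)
The plan is to reduce both assertions to the local statements about the Heisenberg symbol calculus in a single Heisenberg chart, which are already available from \cite{bg,ponge}, and then to globalise by a partition of unity; the only genuinely analytic point is the $L^2$-bound at order zero.

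For the \emph{filtered $*$-algebra} statement, I would cover $M$ by finitely many Heisenberg charts and choose a subordinate partition of unity, so that any $P\in\Psi^m_H(M)$ can be written, modulo a smoothing operator, as a finite sum of $\Psi_H$DOs of order $m$ each supported in a single chart — the off-diagonal contributions, where the cutoffs have disjoint supports, are smoothing by pseudolocality of the calculus. Since $M$ is closed every operator is automatically properly supported, so the local composition formula \cite[Theorem~4.7]{bg} gives $\Psi^{m_1}_H\cdot\Psi^{m_2}_H\subseteq\Psi^{m_1+m_2}_H$ in a chart, and the invariance under changes of Heisenberg coordinates \cite[Proposition~3.1.18]{ponge} makes this chart-independent; as the smoothing operators form a two-sided ideal, $\Psi^*_H(M)$ is a filtered algebra. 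The same bookkeeping applied to the local formula for the symbol of the formal adjoint shows $P^*\in\Psi^m_H(M)$ whenever $P\in\Psi^m_H(M)$, $m\in\Z$, so $\Psi^*_H(M)$ is a filtered $*$-algebra.

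For \emph{$L^2$-boundedness of $T\in\Psi^0_H(M)$}, I would first note that smoothing operators have smooth Schwartz kernel on the closed manifold $M$, hence are Hilbert--Schmidt and in particular $L^2$-bounded; so it suffices to prove the bound modulo $\Psi^{-\infty}_H(M)$. Then I would run the Hörmander square-root trick inside the calculus: choosing $c>0$ with $c^2$ exceeding the supremum of the fibrewise operator norm of the principal symbol, the operator $A:=c^2-T^*T\in\Psi^0_H(M)$ is formally self-adjoint with a positive invertible principal symbol, hence $H$-elliptic, and by the standard recursion in the symbol calculus — using that a positive invertible order-zero symbol has a self-adjoint order-zero square root, obtained via functional calculus in the fibrewise convolution algebra of the osculating group, cf.~\cite{bg,ponge} — one produces $B=B^*\in\Psi^0_H(M)$ with $B^2=A+R$ and $R\in\Psi^{-\infty}_H(M)$. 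For $u\in C^\infty(M)$ this yields $\|Tu\|^2=\langle(c^2-A)u,u\rangle=c^2\|u\|^2-\|Bu\|^2-\langle Ru,u\rangle\le(c^2+\|R\|_{\Bo(L^2)})\|u\|^2$, and $\|R\|_{\Bo(L^2)}<\infty$ by the previous sentence; density of $C^\infty(M)$ in $L^2(M)$ concludes. If one wishes to keep the square-root construction elementary, one can first use the kernel estimates of Remark~\ref{PsiDOprop} together with Schur's test (and the volume growth $|B_{CC}(x,r)|\sim r^{d+2}$) to see that operators of strictly negative order are $L^2$-bounded, after which it is enough to arrange $R$ of order $-1$ rather than $-\infty$.

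The main obstacle is exactly this $L^2$-estimate at the critical order zero, i.e.\ justifying the square-root step: one must know that the positive invertible leading symbol of $A$, an element of the \emph{non-commutative} fibrewise convolution algebra of the osculating group, admits a square root that is again a genuine Heisenberg symbol of order zero, and that the lower-order corrections can be solved away within the calculus. Equivalently, one is confronting the fact that an order-zero $\Psi_H$DO is a genuine Calder\'on--Zygmund singular integral with respect to the Carnot--Carath\'eodory quasi-metric — its kernel is homogeneous of degree $-(d+2)$, with no logarithmic term since $-(d+2)\notin\N$ — whose $L^2$-boundedness is the Knapp--Stein/Folland--Stein theorem on homogeneous groups combined with a freezing-of-coefficients (Cotlar--Stein) argument for the variable-coefficient case. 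All of this is carried out in \cite{bg,ponge}, which is why it is only summarised here.
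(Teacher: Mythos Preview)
Your proposal is correct and follows the standard route from \cite{bg,ponge}, but note that the paper does not actually supply a proof of this theorem: it is stated as a summary of known results with the sentence ``We summarize the main properties of Heisenberg operators in a theorem \cite{bg,ponge}'' and no proof environment follows. So there is no ``paper's own proof'' to compare against; your sketch is precisely the argument one finds in those references (localisation via partition of unity and pseudolocality for the filtered $*$-algebra structure, and the H\"ormander square-root trick or equivalently Cotlar--Stein/Knapp--Stein for $L^2$-boundedness at order zero).
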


The theory above also applies to operators between smooth vector bundles. Whenever $E\to M$ is a hermitean complex smooth vector bundle, the Serre-Swan theorem (cf. \cite[Theorem 2.10]{ponge}) gives the existence of a projection $p\in C^\infty(M,M_N(\C))$, for some $N$, and a $C^\infty(M)$-linear isomorphism $C^\infty(M,E)\cong pC^\infty(M;\C^N)$. Whenever $E_1,E_2\to M$ are two vector bundles corresponding to $p_1,p_2\in C^\infty(M,M_N(\C))$ we define
\[\Psi^m_H(M;E_1,E_2):=p_2[\Psi^m_H(M)\otimes M_N(\C)] p_1.\]
If $E_1=E_2$ we write $\Psi^m_H(M;E_1)=\Psi^m_H(M;E_1,E_1)$. It is clear that an element $P\in \Psi^m_H(M;E_1,E_2)$ induces a continuous operator $C^\infty(M;E_1)\to C^\infty(M;E_2)$, and if $m=0$ this operator extends to a continuous operator $L^2(M;E_1)\to L^2(M;E_2)$.

\begin{remark}
\label{szegoex}
Examples of Heisenberg operators include the Szeg\"o projections on a contact manifold, see \cite[Chapter III.6]{taylorncom}. Szeg\"o projections are certain orthogonal projections in $\Psi^0_H(M)$ with infinite dimensional range. The existence of a Szeg\"o projection is in general a subtle issue, see \cite[Chapter 4]{bg}. If $M=\partial \Omega$, where $\Omega$ is a strictly pseudo-convex domain in a complex manifold, the orthogonal projection onto the closed subspace
\[H^2(\partial\Omega):=\{f\in L^2(\partial\Omega):\;\exists F\in \mathcal{O}(\Omega) \mbox{    s.t.      } F|_{\partial \Omega}=f\},\]
is a Szeg\"o projection. We denote it by $P_{\partial\Omega}$. The projection $P_{\partial \Omega}$ will be referred to as \emph{the} Szeg\"o projection of $\partial \Omega$. For details, see the discussion in \cite[Section 4.1]{pongecrelle}.
\end{remark}

The prototypical $H$-elliptic differential operators are of sub-Laplacian type. We will make use of an auxiliary operator of sub-Laplacian type whose construction depends on a choice of horizontal vector-fields $(V_j)_{j=1}^N$ which span $H$ at every point of $M$. Consider the differential expression
\[\Delta_H:=\sum_{j=1}^N V_j^*V_j\in \Psi^2_H(M).\]
We define a self-adjoint closure of $\Delta_H$ as follows. Consider the quadratic form
\[\mathfrak{q}_H(f):=\sum_{j=1}^N \int_M |V_jf|^2\rd x,\]
with maximal domain \[\mathrm{Dom}(\mathfrak{q}_H)=\{f\in L^2(M):V_j f\in L^2(M)\forall j\}\ . \] Its domain is the form-closure of $C^\infty(M)$. The form $\mathfrak{q}_H$ is positive, and the unbounded operator $\Delta_H$ is defined to be the associated self-adjoint operator, which is positive since $\mathfrak{q}_H$ is. The resolvent of $\Delta_H$ is compact. For more details, see \cite{bgs}. We let $\lambda_0(\Delta_H) \leq \lambda_1(\Delta_H) \leq \cdots \to \infty$ denote the eigenvalues of $\Delta_H$, counted with multiplicity. By \cite{ponge}, the eigenvalues satisfy a Weyl law: there exists an explicit constant $\nu_0(\Delta_H)$ such that
\begin{equation}
\label{sublapasympt}
\lambda_k(\Delta_H)\sim \nu_0(\Delta_H)k^{\frac{2}{d+2}}.
\end{equation}
We also define a Sobolev scale:
\[W^s_H(M):=(1+\Delta_H)^{-s/2}L^2(M).\]
Since $1+\Delta_H$ is $H$-elliptic, $W^1_H(M)=\mathrm{Dom}(\mathfrak{q}_H)$. For a vector bundle $E\to M$ corresponding to the smooth projection $p$ we write $W^s_H(M;E):=pW^s_H(M;\C^N)$. We note the following consequence of Theorem \ref{l2andwhatnot} and Equation \eqref{sublapasympt}.

\begin{thm}
\label{variousheisenbergthings}
Let $M$ be a sub-Riemannian $H$-manifold, $E_1,E_2\to M$ smooth vector bundles thereon and $D\in \Psi^m_H(M;E_1,E_2)$. The operator $D$ extends to a continuous operator
\[D:W^s_H(M;E_1)\to W^{s-m}_H(M;E_2),\]
which is Fredholm if $D$ is $H$-elliptic. In this case $\ker D\subseteq C^\infty(M;E_1)$. Furthermore, if $E_1=E_2$ and $D$ is $H$-elliptic and self-adjoint, the resolvent of $D$ as an unbounded operator on $L^2(M;E_1)$ satisfies $(D+\lambda)^{-1}\in \mathcal{L}^{(d+2)/m,\infty}(L^2(M;E_1))$.
\end{thm}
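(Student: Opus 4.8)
The plan is to establish the three assertions of Theorem \ref{variousheisenbergthings} in sequence, each reducing to the scalar case $E_1=E_2=\C^N$ via the defining projections $p_1,p_2$ and then to local statements via a partition of unity subordinate to a cover by Heisenberg charts. For the boundedness statement, first I would note that $(1+\Delta_H)^{(s-m)/2}D(1+\Delta_H)^{-s/2}$ is, by the composition formula of \cite[Theorem 4.7]{bg} (which respects the Heisenberg order filtration and is closed under taking the $H$-elliptic parametrix of $1+\Delta_H$, cf. Theorem \ref{l2andwhatnot}), again an operator in $\Psi^0_H(M;E_1,E_2)$ modulo smoothing; hence by the $L^2$-boundedness of zeroth order operators in Theorem \ref{l2andwhatnot} it is $L^2$-bounded, which is exactly the claimed continuity $W^s_H(M;E_1)\to W^{s-m}_H(M;E_2)$ after unwinding the definition $W^s_H=(1+\Delta_H)^{-s/2}L^2$. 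The smoothing remainder maps any Sobolev space to $C^\infty$, so it is harmless.

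For the Fredholm and regularity assertions, suppose $D$ is $H$-elliptic. Then by definition there is a parametrix $Q\in\Psi^{-m}_H(M;E_2,E_1)$ with $QD-1$ and $DQ-1$ of Heisenberg order $-1$; iterating the standard Neumann-series construction inside the calculus upgrades these to order $-\infty$, i.e.\ $QD-1$ and $DQ-1$ are smoothing, hence compact on every $W^s_H$. Therefore $D:W^s_H(M;E_1)\to W^{s-m}_H(M;E_2)$ is Fredholm with parametrix $Q$. For elliptic regularity of the kernel: if $Du=0$ with $u\in W^s_H$, then $u=(QD-1)(-u)\in C^\infty(M;E_1)$ since $QD-1$ is smoothing; a bootstrap is not even needed here because the remainder is already infinitely smoothing. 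Thus $\ker D\subseteq C^\infty(M;E_1)$.

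For the final Schatten statement, assume $E_1=E_2=E$ and $D$ self-adjoint and $H$-elliptic on $L^2(M;E)$ with domain $W^m_H(M;E)$. Choose $\lambda$ outside the spectrum; then $(D+\lambda)^{-1}$ is bounded, and composing with $(1+\Delta_H)^{m/2}$ gives, again by the composition formula and $H$-ellipticity of $D+\lambda$, an operator in $\Psi^0_H(M;E)$ which is $L^2$-bounded. Hence the singular values of $(D+\lambda)^{-1}$ are controlled by those of $(1+\Delta_H)^{-m/2}$ up to a bounded factor: $s_k\big((D+\lambda)^{-1}\big)\lesssim s_k\big((1+\Delta_H)^{-m/2}\big)$. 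By the Weyl law \eqref{sublapasympt}, $\lambda_k(\Delta_H)\sim \nu_0 k^{2/(d+2)}$, so $s_k\big((1+\Delta_H)^{-m/2}\big)=(1+\lambda_k(\Delta_H))^{-m/2}\sim k^{-m/(d+2)}$, which places $(1+\Delta_H)^{-m/2}$, and therefore $(D+\lambda)^{-1}$, in $\mathcal{L}^{(d+2)/m,\infty}(L^2(M;E))$; the passage to vector bundles only multiplies eigenvalue counts by the constant rank of $E$ and does not change the asymptotic exponent. The main obstacle is the bookkeeping needed to justify that conjugating by powers of $(1+\Delta_H)$ stays within the Heisenberg calculus of the correct order — i.e.\ that $(1+\Delta_H)^{s/2}$ is itself a $\Psi_H$DO of order $s$ for all real $s$, or at least that its products with the operators at hand are — together with the comparison of singular values via the min-max principle after absorbing the $L^2$-bounded factor; once these are in place the Weyl law does the rest.
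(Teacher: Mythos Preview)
The paper does not give a proof of this theorem at all: it is stated as a direct consequence of Theorem \ref{l2andwhatnot} (the basic calculus properties) and the Weyl law \eqref{sublapasympt}, with the implicit understanding that these are standard facts from \cite{bg,ponge}. Your proposal is exactly the natural expansion of that one-line justification, and the argument is correct in all essential points.

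One small imprecision worth noting: in the Schatten part you write that $(1+\Delta_H)^{m/2}(D+\lambda)^{-1}$ is ``an operator in $\Psi^0_H(M;E)$''. The resolvent $(D+\lambda)^{-1}$ is not literally an element of the calculus, only a parametrix of $D+\lambda$ is; but this does not matter for your conclusion, since all you need is $L^2$-boundedness of the product, and that follows directly from the mapping property $(D+\lambda)^{-1}:L^2\to W^m_H$ (resolvent of a self-adjoint operator with domain $W^m_H$) composed with $(1+\Delta_H)^{m/2}:W^m_H\to L^2$. Your flagged ``main obstacle'' --- that real powers $(1+\Delta_H)^{s/2}$ belong to $\Psi^s_H(M)$ --- is precisely what is established in \cite[Chapter 5]{ponge} and is used elsewhere in the paper (e.g.\ in defining $D_F$ via $\sqrt{1+\Delta_H}$), so it is available here.
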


\begin{remark}
We note the well known fact that from Proposition \ref{smoothcom} and Theorem \ref{variousheisenbergthings}, both Theorem \ref{mainbdd} and Theorem \ref{mainweakschatten} follow for smooth functions $f$ and $a$, respectively.
\end{remark}

The next proposition is an important observation for the proof of Theorem \ref{mainbdd}.

\begin{prop}
\label{reductionprop}
Assume that $M$ is a closed sub-Riemannian $H$-manifold. Any $D\in \Psi^1_H(M)$ can be written as a finite sum
$$D=\sum_{j=1}^N T_jV_j+R,$$
where $T_j\in \Psi_H^0(M)$, $V_j$ are horizontal vector fields and $R$ is a smoothing operator.
\end{prop}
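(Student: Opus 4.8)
The plan is to reduce $D$ to a sum of order-zero operators composed with horizontal vector fields by working locally in Heisenberg charts and then gluing. First I would fix a finite cover of $M$ by Heisenberg coordinate charts $(U_k)_{k}$ with Heisenberg frames $X_0^{(k)}, X_1^{(k)}, \ldots, X_d^{(k)}$, and a subordinate partition of unity $(\phi_k)$ with $\phi_k \in C^\infty_c(U_k)$, together with functions $\psi_k \in C^\infty_c(U_k)$ with $\psi_k \equiv 1$ on $\supp \phi_k$. Since $\sum_k \phi_k = 1$, one writes $D = \sum_k \phi_k D \psi_k + \sum_k \phi_k D(1-\psi_k)$; the second sum has smooth Schwartz kernel (the kernels of the pieces are supported away from the diagonal), hence contributes to $R$. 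So it suffices to treat each localized operator $\phi_k D \psi_k$, which is (the extension by zero of) an operator in $\Psi^1_H(U_k)$.

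The core of the argument is local: given $P \in \Psi^1_H(U)$ with symbol $p \in S^1(U \times \R^{d+1})$, I want to write $P = \sum_{j=1}^d Q_j (-iX_j) + Q_0$ modulo smoothing, with $Q_j \in \Psi^0_H(U)$. Here the key point is that the symbol $\sigma_j(x,\xi)$ of the horizontal vector field $-iX_j$ is, to leading order, the $j$-th coordinate function $\xi_j$ on the fibre (in the privileged/Heisenberg frame), so that $\xi_1, \ldots, \xi_d$ together with $\xi_0$ generate, after multiplication by order-zero symbols, everything of positive Heisenberg order. Concretely: the top homogeneous part $p_1 \in S_1(U\times\R^{d+1})$ is smooth and homogeneous of degree $1$ away from $0$; away from $0$ one can write $p_1(x,\xi) = \sum_{j=1}^d a_j(x,\xi)\xi_j + a_0(x,\xi)\xi_0$ with $a_j$ homogeneous of degree $0$ and $a_0$ homogeneous of degree $-1$, simply by Taylor/Euler-type decomposition of a homogeneous function in terms of the coordinate directions (using a smooth partition of the unit Koranyi sphere subordinate to where each $\xi_j \neq 0$ or $\xi_0\neq 0$). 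Since $a_0 \xi_0$ is itself of Heisenberg order $1$ but $a_0$ of order $-1$, and $\xi_0 = \sigma_0$ has order $2$... — here one must be slightly careful, and instead absorb the $\xi_0$-direction by writing $p_1 - \sum_{j=1}^d a_j \xi_j$, which vanishes to infinite order in the $\xi_j$ directions near the $\xi_0$-axis, as a degree-$1$ symbol of the single variable pair $(x,\xi_0)$; on that region $\xi_0 \sim |\xi|_H^2$, so $p_1 - \sum a_j\xi_j$ divided by $\xi_0$ is a bounded (degree $-1$) symbol $b_0$, and $b_0 \xi_0$ is a well-defined operator. Quantizing: with $T_j := a_j(x,-iX)$ and $T_j X_j := $ the composition, Theorem~4.7 of \cite{bg} (the product formula, recalled before Proposition~\ref{smoothcom}) gives that $P - \sum_{j=1}^d T_j(-iX_j) - T_0 X_0$ lies in $\Psi^0_H(U)$, where $T_0 = b_0(x,-iX)$ and one notes $X_0 = [X_1, X_2]/L_{12}$-type expressions allow rewriting the single vertical vector field in terms of a horizontal one composed with order-zero operators — more cleanly, since $X_0 \in \Psi^2_H$ is itself $H$-elliptic-ish in its direction, and $T_0 X_0 \in \Psi^1_H$, we can iterate. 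Then I would iterate: the remainder $P - \sum T_j(-iX_j) \in \Psi^0_H(U)$ is already of order $0$, so we are in fact done in one step once the order-$1$ symbol is split, \emph{provided} the vertical direction is handled. The cleanest route is: first split off the genuinely horizontal part, leaving a $\Psi^1_H$ operator $P'$ whose symbol is supported (microlocally) near the $\xi_0$-axis; there $X_0$ can be written as a bracket of two horizontal vector fields from the frame (using $[X_j,X_k] = L_{jk}X_0$ with $L$ non-degenerate on $H$ after possibly enlarging the spanning set $(V_j)$), so $X_0 = \tfrac1{L_{jk}}(X_j X_k - X_k X_j)$, and $P' = S\cdot X_0$ with $S\in\Psi^{-1}_H$ becomes $P' = \tfrac1{L_{jk}}(SX_j)X_k - \tfrac1{L_{jk}}(SX_k)X_j$ with $SX_j, SX_k \in \Psi^0_H$ — exactly the required form.

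Finally I would reassemble: summing the local decompositions against the partition of unity, $D = \sum_k \phi_k D \psi_k = \sum_k \sum_{j} \phi_k T_{k,j} (-iX_j^{(k)}) \psi_k + (\text{smoothing})$, and each $\phi_k T_{k,j}(-iX_j^{(k)})\psi_k = (\phi_k T_{k,j}) \cdot (-i X_j^{(k)} \psi_k)$; since $-iX_j^{(k)}\psi_k$ differs from $\psi_k' (-iX_j^{(k)})$ by a multiplication operator (the derivative of $\psi_k$) and $\phi_k T_{k,j}\in\Psi^0_H(M)$, and since each $X_j^{(k)}$ (times a cutoff) is a globally defined horizontal vector field on $M$, we obtain the claimed global finite sum $D = \sum_j T_j V_j + R$ with $T_j \in \Psi^0_H(M)$, $V_j$ horizontal, $R$ smoothing. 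The main obstacle I anticipate is the careful bookkeeping of the vertical direction: a first-order Heisenberg operator is \emph{second} order in the classical sense in the $X_0$ direction does not quite happen, but rather the symbol $\sigma_0$ has Heisenberg degree $2$, so naively "dividing by $\xi_j$" only reaches part of the symbol, and one genuinely needs the bracket-generating hypothesis $[H,H]+H = TM$ to express $X_0$ through horizontal fields — this is where the geometry enters and where one must be most careful to stay within the calculus and control the resulting order-zero factors.
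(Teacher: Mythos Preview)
Your approach is genuinely different from the paper's, and considerably more laborious. The paper gives a two-line global argument using the sub-Laplacian already constructed in Subsection~\ref{subsectionschatten}: since $\Delta_H = \sum_{j=1}^N V_j^* V_j$ is $H$-elliptic (this is precisely where bracket-generation enters), it admits a parametrix $Q \in \Psi_H^{-2}(M)$ with $1 - Q\Delta_H \in \Psi^{-\infty}(M)$. Then
\[
D \;=\; DQ\Delta_H + D(1 - Q\Delta_H) \;=\; \sum_{j=1}^N (DQV_j^*)\, V_j \;+\; R,
\]
and $T_j := DQV_j^* \in \Psi_H^{1-2+1}=\Psi_H^0(M)$ by order-counting in the calculus. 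No charts, no partition of unity, no separate treatment of the vertical direction.

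Your local route can likely be made to work, but as written it has a real soft spot. In the Heisenberg calculus the principal-symbol product is the non-commutative convolution on the tangent group (this is exactly the content of \cite[Theorem~4.7]{bg} that you invoke), not pointwise multiplication. Hence a pointwise factorization $p_1(x,\xi) = \sum_j a_j(x,\xi)\xi_j + a_0(x,\xi)\xi_0$ does \emph{not} immediately yield $P - \sum_j a_j(x,-iX)X_j - a_0(x,-iX)X_0 \in \Psi_H^0$; you must check that the principal symbol of $T_j X_j$ is actually $a_j\cdot\xi_j$ in the relevant sense, or else solve the factorization directly in the $\#$-product. The same issue bites the step ``$P' = S\cdot X_0$ with $S\in\Psi_H^{-1}$'': $X_0$ is not $H$-elliptic, so it has no parametrix in $\Psi_H^*$, and ``dividing by $\xi_0$'' is not a legal move in the symbol algebra even microlocally. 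You correctly identify that bracket-generation is the crux, via $X_0 = L_{jk}^{-1}[X_j,X_k]$, but in the paper's argument this geometric input is packaged once and for all into the $H$-ellipticity of $\Delta_H$, after which only the formal properties of the calculus (composition and parametrices) are needed.
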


\begin{proof}
The operator $\Delta_H$ is $H$-elliptic and as such there is an operator $Q\in \Psi^{-2}_H(M)$ such that
$$1-Q\Delta_H, \;1-\Delta_HQ\in \Psi^{-\infty}(M).$$
Any $D\in \Psi_H^1(M)$ can be written as
\[D=DQ\Delta_H+D(1-Q\Delta_H)=\sum_{j=1}^N DQV_j^*V_j+D(1-Q\Delta_H).\]
The assertion follows with $T_j:=DQV_j^*\in \Psi^0_H(M)$ and $R=D(1-Q\Delta_H)$.
\end{proof}

\subsection{The $T1$-theorem of Hyt\"onen-Martikainen}
\label{t1subsection}

The proof of Theorem \ref{mainbdd} will rely on a $T1$-theorem from \cite{hytmart}. The setup is recalled in this section. We restrict our attention to sub-Riemannian $H$-manifolds equipped with its Carnot-Carath\'eodory metric. The results of \cite{hytmart} hold in the larger generality of a geometrically doubling regular quasimetric space. To simplify notation, for a measurable set $U\subseteq M$ we let $|U|$ denote its volume and write $B_{CC}(x,r)$ for the ball in the Carnot-Carath\'eodory metric of radius $r>0$ centered in $x$. If $B=B_{CC}(x,r)$ we let $\kappa B=B_{CC}(x,\kappa r)$ for $\kappa>0$. If $B\subseteq M$ is a ball contained in a Heisenberg chart and $\kappa>0$ is small enough, by Lemma \ref{equivalencesofmetrics} there is an $\epsilon\in (0,\kappa)$ such that $(\kappa-\epsilon) B\subseteq \delta_\kappa^*B\subseteq (\kappa+\epsilon)B$.

\begin{deef}
A function $f \in L^1_{loc}(M)$ belongs to $BMO^p_\kappa(M)$, when there exists a constant $C\geq 0$ such that, for all balls $B \subset M$, there exists an $f_B\in \mathbb{C}$ with
$$\int_B |f-f_B|^p \leq C^p |\kappa B| \ . $$
The seminorm $\|f\|_{BMO^p_\kappa}$ is defined to be the smallest such $C$.
\end{deef}

Following the notation of \cite{hytmart}, we define the function $\lambda(x,r):=|B_{CC}(x,r)|$.

\begin{prop}
\label{volumeasy}
The function $\lambda$ satisfies the asymptotics
$$\lambda(x,r)\sim r^{d+2}\quad\mbox{as}\quad r\to 0.$$
\end{prop}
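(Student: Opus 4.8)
The plan is to reduce the claim to a purely local statement about the volume of Koranyi balls in a privileged coordinate chart, and then compute the model volume by the scaling of the Heisenberg dilation. First I would fix $x\in M$ and work in a privileged chart $\epsilon_x\colon U\to\R^{d+1}$ centered at $x$, as in Proposition \ref{dccvskorga}. By that proposition there is a neighborhood $V$ of $x$ and a constant $C>0$ such that $\frac1C|\epsilon_x(y)|_H\le \rd_{CC}(x,y)\le C|\epsilon_x(y)|_H$ for all $y\in V$; equivalently, for $r$ small enough the Carnot-Carath\'eodory ball $B_{CC}(x,r)$ is squeezed between two Koranyi balls $\epsilon_x^{-1}\bigl(B_H(0,r/C)\bigr)\subseteq B_{CC}(x,r)\subseteq \epsilon_x^{-1}\bigl(B_H(0,Cr)\bigr)$, where $B_H(0,\rho)=\{\zeta\in\R^{d+1}:|\zeta|_H<\rho\}$. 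Hence it suffices to show that the pushforward of the Riemannian volume under $\epsilon_x$ has density bounded above and below near $0$ (so the volume of $\epsilon_x^{-1}(B_H(0,\rho))$ is comparable to the Lebesgue volume of $B_H(0,\rho)$), and then to compute the latter.

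The second step is the elementary scaling computation: under the Heisenberg dilation $\lambda.(t,z)=(\lambda^2 t,\lambda z)$ from Equation \eqref{actionr}, the Koranyi gauge satisfies $|\lambda.\zeta|_H=\lambda|\zeta|_H$, so $B_H(0,\rho)=\rho.B_H(0,1)$, and the Jacobian of $\zeta\mapsto\rho.\zeta$ is $\rho^{2}\cdot\rho^{d}=\rho^{d+2}$ (the $t$-direction scales like $\rho^2$, the $d$ directions in $z$ scale like $\rho$). Therefore the Lebesgue volume of $B_H(0,\rho)$ equals $c_d\,\rho^{d+2}$ with $c_d=\mathrm{vol}\bigl(B_H(0,1)\bigr)>0$. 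Combining this with the squeeze from Proposition \ref{dccvskorga} and the boundedness above and below of the coordinate density gives $\lambda(x,r)=|B_{CC}(x,r)|\sim r^{d+2}$ as $r\to 0$.

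For the density bound I would argue as follows: the Riemannian volume form, expressed in the privileged coordinates $\epsilon_x$, is $\rho_x(\zeta)\,\rd\zeta$ for a smooth positive function $\rho_x$ on the chart, so on a fixed small neighborhood of $0$ one has $0<c\le \rho_x\le C<\infty$; since the privileged charts can be chosen depending smoothly on $x$ (again Proposition \ref{dccvskorga}) and $M$ is compact, these bounds, the radius $r_0$ below which the squeeze is valid, and the constant $C$ in Proposition \ref{dccvskorga} can all be taken uniform in $x$. This uniformity is what makes the asymptotics $\lambda(x,r)\sim r^{d+2}$ hold with constants independent of $x$, which is the form needed later for the $T(1)$-theorem.

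The main obstacle is not any single hard estimate but making sure the comparison constants are genuinely uniform in the base point $x$: one must invoke that the privileged coordinates depend smoothly on $x$, that the chart neighborhoods can be chosen of a uniform size by compactness, and that Proposition \ref{dccvskorga} already delivers the key metric comparison with a locally uniform constant. Once that bookkeeping is in place, everything reduces to the one-line dilation Jacobian computation $\rho^{2}\rho^{d}=\rho^{d+2}$, which gives the exponent $d+2$ (the homogeneous dimension of the Heisenberg group). I would also note in passing that since $\lambda(x,r)$ is automatically comparable to $r^{d+2}$ only as $r\to 0$ — for large $r$ it saturates at $|M|$ — the statement is genuinely an asymptotic one, consistent with how it is phrased.
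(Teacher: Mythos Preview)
Your proposal is correct and follows essentially the same route as the paper, which simply remarks that the result follows from a direct computation in the Koranyi gauge together with the equivalence of $\rd_{CC}$ and the Koranyi quasi-metric (Lemma \ref{equivalencesofmetrics}, equivalently Proposition \ref{dccvskorga}). Your write-up merely spells out the dilation Jacobian computation and the uniformity-in-$x$ bookkeeping that the paper leaves implicit.
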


The proof of the proposition follows from a direct computation in the Koranyi gauge and Lemma \ref{equivalencesofmetrics}. Proposition \ref{volumeasy} shows that the Riemannian measure on $M$ is doubling with respect to the Carnot-Carath\'eodory metric. We denote the diagonal in $M \times M$ by $\Delta$.

\begin{deef}
\label{calzydef}
A function $K : M\times M \setminus \Delta \to \mathbb{C}$ is called a Calder\'{o}n--Zygmund kernel, provided there are $\alpha \in (0,1]$ and $c,C>0$ such that
\begin{align*}
\big|K(x,y)\big| &\leq C \cdot\rd_{CC}(x,y)^{-d-2} \ ,\\
\big|K(x,y)-K(x',y)\big| &\leq C \frac{\rd_{CC}(x,x')^\alpha}{\rd_{CC}(x,y)^{\alpha+d+2}} \quad \forall x,y\mbox{   such that   }\rd_{CC}(x,y)\geq c \rd_{CC}(x,x') \ ,\\
\big|K(x,y)-K(x,y')\big| &\leq C \frac{\rd_{CC}(y,y')^\alpha}{\rd_{CC}(x,y)^{\alpha+d+2}} \quad \forall x,y\mbox{   such that   }\rd_{CC}(x,y)\geq c \rd_{CC}(y,y') \ .
\end{align*}
The smallest such constant $C$ is denoted by $\|K\|_{CZ_\alpha}$.

An operator $f\mapsto Tf$ acting on suitable functions $f$ is called a Calder\'{o}n--Zygmund operator, provided there exists a Calder\'{o}n--Zygmund kernel $K$ such that $$Tf(x) = \int_M K(x,y)\ f(y)\ dy$$
for all $x \not \in \mathrm{supp}\ f$.
\end{deef}

\begin{remark}
\label{CZ1} 
On a sub-Riemannian $H$-manifold, the norm $\|K\|_{CZ_1}$ is bounded by the optimal constant $C$ that satisfies the first estimate $\big|K(x,y)\big| \leq C \cdot\rd_{CC}(x,y)^{-d-2}$ of Definition \ref{calzydef} as well as the two estimates:
$$|X_{x} K(x,y)| \leq C \frac{1}{\rd_{CC}(x,y)^{d+3}} \quad\mbox{and}\quad |X_{y} K(x,y)| \leq C \frac{1}{\rd_{CC}(x,y)^{d+3}}\ ,$$
for any horizontal vector field $X$.
\end{remark}

\begin{deef}
For every ball $B\subset M$ and every $\varepsilon\in (0,1]$, fix a smooth cut-off function $\chi_{B,\varepsilon} \in C^\infty(M)$ with $\chi_B \leq \chi_{B,\varepsilon} \leq \chi_{(1+\varepsilon)B}$. An operator $T$ is called weakly bounded, if there exists $\Lambda>0$ and a function $S : (0,1] \to (0,\infty)$
 with
$\langle T \chi_{B,\varepsilon}, \chi_{B,\varepsilon}\rangle \leq C S(\varepsilon)  |\Lambda B| \ .$
The smallest such $C$ is denoted by $\|T\|_{WBP_{\Lambda, S}}$.
\end{deef}

In our setting, the main theorem of \cite{hytmart} says:

\begin{thm}
\label{thmhytmartikain}
Let $T$ be an $L^2$-bounded integral operator with Calder\'{o}n--Zygmund kernel $K$, $b_1, b_2\in L^\infty(M)$ with $\mathrm{Re}\ b_1, \mathrm{Re}\ b_2 > c > 0$, $\kappa, \Lambda>1$ and $S: (0,1]\to (0,\infty)$. Then $$\|T\|_{\Bo(L^2(M))} \lesssim \|Tb_1\|_{BMO^2_\kappa} + \|T^*b_2\|_{BMO^2_\kappa} + \|b_2 T b_1\|_{WBP_{\Lambda, S}} + \|K\|_{CZ_\alpha} \ .$$
\end{thm}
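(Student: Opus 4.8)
\emph{Proof strategy.} This statement is the local $Tb$-theorem of Hyt\"onen--Martikainen \cite{hytmart} specialized to the Carnot--Carath\'eodory metric measure space attached to $M$, and the plan is to deduce it by verifying that this space meets the structural hypotheses under which \cite{hytmart} is established, and then matching the definitions of the present subsection with those of \cite{hytmart}. First I would record that $(M,\rd_{CC})$ is a genuine metric space --- $\rd_{CC}$ separates points and satisfies the triangle inequality by concatenation of horizontal paths, invoking Chow's theorem --- so whatever quasi-metric regularity \cite{hytmart} requires holds trivially with quasi-triangle constant $1$, and that $M$, being closed, carries a finite Riemannian measure. The one geometric fact that has to be extracted is that $(M,\rd_{CC})$ is \emph{geometrically doubling}, i.e.\ every $\rd_{CC}$-ball of radius $r$ is covered by at most $N_0$ balls of radius $r/2$ with $N_0$ independent of centre and radius. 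By Proposition \ref{volumeasy}, together with compactness of $M$, the Riemannian volume is doubling with respect to $\rd_{CC}$ (as already noted after that proposition); a standard packing argument --- a maximal $r$-separated subset of a $\rd_{CC}$-ball of radius $2r$ has the property that the concentric balls of radius $r/2$ are pairwise disjoint and lie in a fixed dilate of the original ball, so volume doubling bounds their cardinality --- then upgrades this to geometric doubling of the metric space, uniformly in the centre and the radius.

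With the ambient space placed inside the framework of \cite{hytmart}, the second step is purely a translation of vocabulary. The Calder\'on--Zygmund kernels of Definition \ref{calzydef} (with smoothness exponent $\alpha$), the spaces $BMO^2_\kappa(M)$, and the weak boundedness property introduced above are exactly the corresponding objects used in \cite{hytmart} once $\rd_{CC}$ and the Riemannian measure are substituted for their abstract metric and measure; the only convention to flag is that $T$ is assumed to be represented by integration against $K$ only for $x\notin\supp f$, which is precisely the standing assumption in \cite{hytmart}. Feeding the data $(T,K,b_1,b_2)$ and the parameters $\kappa,\Lambda,S$ into the main theorem of \cite{hytmart} then yields $\|T\|_{\Bo(L^2(M))}\lesssim \|Tb_1\|_{BMO^2_\kappa}+\|T^*b_2\|_{BMO^2_\kappa}+\|b_2Tb_1\|_{WBP_{\Lambda,S}}+\|K\|_{CZ_\alpha}$. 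For the application to Theorem \ref{mainbdd} one may always take $\alpha=1$, the relevant kernel bound being then expressible through the horizontal-derivative estimates of Remark \ref{CZ1}.

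The point I would make explicit is that essentially all of the difficulty is imported rather than reproduced: the construction of adjacent dyadic systems adapted to $\rd_{CC}$, the random dyadic grids, the good/bad-cube decomposition, and the stopping-time and paraproduct estimates that convert the three testing quantities into an $L^2$ bound constitute the content of \cite{hytmart} at the level of geometrically doubling spaces, and we invoke them as a black box. The only work on our side is the geometric-doubling verification above, which rests solely on the volume asymptotics of Proposition \ref{volumeasy} and, through them, on the comparison of $\rd_{CC}$ with the Koranyi quasi-metric furnished by Lemma \ref{equivalencesofmetrics}; I do not foresee any substantive obstacle there.
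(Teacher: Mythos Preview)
Your proposal is correct and matches the paper's treatment: the paper does not prove this theorem but cites it directly from \cite{hytmart}, having already noted (immediately after Proposition~\ref{volumeasy}) that the Riemannian measure is doubling with respect to $\rd_{CC}$ and that the results of \cite{hytmart} apply to geometrically doubling regular quasimetric spaces. Your write-up simply makes explicit the verification that the paper leaves implicit, so there is nothing to add or correct.
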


\large
\section{Commutators of Lipschitz functions with first order $\Psi_H$DOs}
\normalsize
\label{sectiontwo}

In this Section we will prove Theorem \ref{mainbdd}. As in the previous Section, let $M$ denote a closed sub-Riemannian $H$-manifold. It suffices to prove Theorem \ref{mainbdd} for $E=E'=\C$ -- the trivial bundle. We are going to show that the operator $[D,f]$ fits into the framework of Subsection \ref{t1subsection}.

\begin{lem}
\label{bmocontinuity}
Let $T \in \Psi^0_H(M)$. Then $T : L^\infty(M) \to BMO^2(M)$ is continuous.
\end{lem}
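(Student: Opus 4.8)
The plan is to reduce the statement to a standard Calderón--Zygmund estimate using the kernel bounds recorded in Remark \ref{PsiDOprop} together with the volume asymptotics of Proposition \ref{volumeasy}. First I would observe that it suffices to prove the estimate $\|Tf\|_{BMO^2(M)} \lesssim \|f\|_{L^\infty(M)}$, and that by a partition of unity subordinate to Heisenberg charts, combined with the fact that $\Psi^0_H(M)$ is invariant under changes of Heisenberg charts, one may work locally inside a single chart $U$; the off-diagonal pieces $\chi_i T \chi_j$ with disjoint supports are integral operators with smooth, hence bounded, kernels and are harmless. So the core case is $T = p(x,-iX) + R$ acting on functions supported in a fixed Heisenberg chart.

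Next I would record the two ingredients needed to run the classical argument. By Remark \ref{PsiDOprop} applied with $m=0$, the Schwartz kernel $k_T(x,y)$ of $T$ satisfies $|k_T(x,y)| \lesssim \rd_{CC}(x,y)^{-(d+1)}$ and $|X_x k_T(x,y)| + |X_y k_T(x,y)| \lesssim \rd_{CC}(x,y)^{-(d+2)}$ for $x\neq y$, with a possible logarithmic term in the degree-$0$ homogeneous component of the underlying $\Ko^{-d-2}$ kernel; since $\int_{B_{CC}(x,r)} \rd_{CC}(x,y)^{-(d+1)}(1 + |\log \rd_{CC}(x,y)|)\,\rd y$ converges by Proposition \ref{volumeasy}, these bounds make $k_T$ a weakly singular, "Calderón--Zygmund-type" kernel. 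Meanwhile $T$ is $L^2$-bounded by Theorem \ref{l2andwhatnot}. The standard reasoning then goes: fix a ball $B = B_{CC}(x_0, r)$, split $f = f_1 + f_2$ with $f_1 = f\chi_{2B}$ and $f_2 = f\chi_{M\setminus 2B}$; for $f_1$ use $L^2$-boundedness of $T$ and the doubling property to bound $\int_B |Tf_1|^2 \lesssim \|f_1\|_{L^2}^2 \lesssim |B|\,\|f\|_{L^\infty}^2$; for $f_2$ set $f_B := Tf_2(x_0)$ (which is well defined since $x_0 \notin \supp f_2$) and estimate, for $x\in B$,
\[
|Tf_2(x) - Tf_2(x_0)| \leq \int_{M\setminus 2B} |k_T(x,y) - k_T(x_0,y)|\,|f(y)|\,\rd y \lesssim \|f\|_{L^\infty}\int_{\rd_{CC}(x_0,y)\geq 2r} \frac{\rd_{CC}(x,x_0)^{\alpha}}{\rd_{CC}(x_0,y)^{d+1+\alpha}}\,\rd y,
\]
where the Hölder-regularity in the first slot with any $\alpha\in(0,1]$ follows from the gradient bound via a Carnot--Carathéodory mean value argument along horizontal paths. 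The last integral is $\lesssim r^\alpha \cdot r^{-\alpha} = O(1)$ by Proposition \ref{volumeasy}, giving $|Tf_2(x) - f_B| \lesssim \|f\|_{L^\infty}$ uniformly on $B$. Adding the two contributions and averaging over $B$ yields $\fint_B |Tf - f_B|^2 \lesssim \|f\|_{L^\infty}^2$, i.e. $Tf \in BMO^2(M)$ with $\|Tf\|_{BMO^2} \lesssim \|f\|_{L^\infty}$.

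The main obstacle is purely technical and concerns the degree of homogeneity: a zeroth order $\Psi_H$DO has a kernel of homogeneous degree $-(d+2)+\big((d+2)-2\big)$... more precisely its associated Beals--Greiner kernel lies in $\Ko^{-(d+2)}$, whose leading term is homogeneous of degree $-(d+1)$ times the Koranyi gauge plus a genuinely order-$0$ (and possibly $\log$) piece, so $k_T$ is only \emph{weakly} singular rather than a true Calderón--Zygmund kernel in the sense of Definition \ref{calzydef}, and one must check carefully that the $L^1$-integrability near the diagonal (needed to define $Tf_2(x_0)$ and to run the splitting) survives the logarithm — which it does, by Proposition \ref{volumeasy}. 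One must also be slightly careful that the "Hölder in the $x$-slot" estimate uses $\rd_{CC}$-geodesics staying in the region where the gradient bound holds; shrinking $B$ (equivalently taking $r$ small, the large-$r$ case being trivial since $M$ is compact and $\rd_{CC}$ bounded) handles this. Modulo these bookkeeping points the argument is the textbook one.
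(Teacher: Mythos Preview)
Your strategy is exactly the paper's: localize to a chart, fix a ball $B=B_{CC}(x_0,r)$, split $f$ into a near piece supported on a dilate of $B$ (controlled by the $L^2$-boundedness of $T$) and a far piece (controlled by the kernel-difference estimate, subtracting the constant $f_B=Tf_2(x_0)$). The paper uses a smooth cutoff instead of your sharp $\chi_{2B}$ and phrases the constant subtraction distributionally, but these are cosmetic.

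The one substantive issue is that you have the kernel exponents wrong, and this drives the confused ``main obstacle'' paragraph. A kernel in $\mathcal{K}^{-(d+2)}$ has leading term homogeneous of degree $-(d+2)$ in the Koranyi gauge, so for $T\in\Psi^0_H$ one has $|k_T(x,y)|\lesssim \rd_{CC}(x,y)^{-(d+2)}$ and $|X_xk_T(x,y)|\lesssim \rd_{CC}(x,y)^{-(d+3)}$: this is a \emph{genuine} Calder\'on--Zygmund kernel matching the homogeneous dimension $d+2$, not a weakly singular one. (The exponent $-(d+1+m)$ stated in Remark~\ref{PsiDOprop} is a typo for $-(d+2+m)$; observe that the paper's own proof of the lemma uses the difference bound $\rd_{CC}(x,x_0)/\rd_{CC}(x,y)^{d+3}$, consistent with the correct exponent.) With the right exponents the far-part integral is simply
\[
\int_{\rd_{CC}(x_0,y)\geq 2r} \frac{r}{\rd_{CC}(x_0,y)^{d+3}}\,\rd y \;\sim\; r\cdot r^{-1}\;=\;O(1),
\]
which is also where your arithmetic ``$r^\alpha\cdot r^{-\alpha}$'' implicitly lands. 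Consequently there is no ``order-$0$ homogeneous plus $\log$'' piece to worry about, and no $L^1$-integrability issue near the diagonal is relevant anyway, since $Tf_2(x_0)$ only sees the kernel on $\{\rd_{CC}(x_0,y)\geq 2r\}$ where it is smooth. Once the exponents are corrected your sketch and the paper's proof coincide.
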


\begin{proof}
After a choice of privileged coordinates and a partition of unity, the problem is reduced to a local problem in a chart $U$. It suffices to prove that $T: L^\infty_c(U)\to BMO^2(M)/\C$ is bounded. Fix $x_0\in M$. Take $r>0$. Let $\chi \in C_c^\infty(U)$ be such that $\chi \equiv 1$ on $B_{CC}(x_0, 2r)$, $\chi \equiv 0$ outside $B_{CC}(x_0, 3r)$. For $f \in L^\infty_c(U)$ consider $g_{1,\chi} = T(f \chi)$ and $g_{2,\chi} = T(f (1-\chi))$. As $T$ is continuous on $L^2(U)$,
\begin{align*}
\frac{1}{|B(x_0, r)|}\int_{B(x_0, r)}|g_{1,\chi}|^2 &\leq C \frac{\|\chi f\|_{L^2(U)}^2}{|B_{CC}(x_0, r)|}\leq  C \frac{3^{d+1}|B_{CC}(x_0, r)|}{|B_{CC}(x_0, r)|}\|f\|_{L^\infty(U)}^2    \leq  3^{d+1}C \|f\|_{L^\infty(U)}^2\ .
\end{align*}
If $k_T(x,y)$ is the distributional kernel of $T$ and $T'$ is the operator with integral kernel $k_T(x,y)-k_T(x_0,y)$, then $g_{2,\chi} = T' f\ (1-\chi)$ modulo constants. Indeed, for any $\phi \in C^\infty_c(U)$ with $\int_U \phi = 0$,
\begin{align*}
\langle g_{2,\chi},\phi\rangle & = \langle f(1-\chi),T^*\phi\rangle = \int_U\left[ \mbox{p.v.}\int_U f(y)(1-\chi(y)) k_T(x,y) \phi(x) \rd x\right]\rd y\\
& = \int_U \left[ \mbox{p.v.}\int_U f(y)(1-\chi(y))(k_T(x,y)-k_T(x_0,y))\ \phi(x)\rd x\right]\rd y\\
&= \langle T' f(1-\chi),\phi\rangle  \ .
\end{align*}
This determines the distribution $g_{2,\chi}$ up to a constant. Using Remark \ref{PsiDOprop}, we obtain for a.e.~$x \in B_{CC}(x_0,r)$
\begin{align*}|g_{2,\chi}(x)| &= \big|\mbox{p.v.}\int_U  (k_T(x,y)-k_T(x_0,y)) f(y)(1-\chi(y))\rd y \big|\\
&\leq \int_{\mathrm{supp}(f) \setminus B_{CC}(x_0, 2r)}  \ |k_T(x,y)-k_T(x_0,y)| |f(y)|\rd y\\
&\lesssim \int_{\mathrm{supp}( f) \setminus B_{CC}(x_0, 2r)} \ \frac{\rd_{CC}(x,x_0)}{\rd_{CC}(x,y)^{d+3}} |f(y)|\rd y\\
&\leq \|f\|_{L^\infty(U)}\ \int_{U \setminus B_{CC}(x_0, 2r)} \frac{r}{\rd_{CC}(x,y)^{d+3}}\rd y \lesssim \|f\|_{L^\infty(U)} \ .
\end{align*}
\end{proof}

If $U\subseteq V$ is a precompact open subset of $V\subseteq M$, we write $U\prec \chi\prec V$ if $\chi\in C^\infty_c(V)$ satisfies $0\leq \chi\leq 1$ and $\chi=1$ on $U$. If $V$ lies in a Heisenberg coordinate chart we write $\chi_\epsilon:=\delta_\epsilon^*\chi$. If $U=B_{CC}(x_0,r_1)$ and $V=B_{CC}(x_0,r_2)$ then $U\prec \chi\prec V$ implies $B_{CC}(x_0,(1-\epsilon_0)\epsilon r_1)\prec \chi_\epsilon \prec B_{CC}(x_0,(1+\epsilon_0)\epsilon r_2)$ for some $\epsilon_0>0$ and all small enough $\epsilon>0$.

\begin{lem}
\label{weakbddness}
Assume that $T \in \Psi^0_H(M)$. For $\chi \in C^\infty(M)$ supported in a privileged coordinate chart centered in $u\in M$ with $\chi \equiv 1$ near $u$, there is a $C>0$ such that for $j=1,\ldots ,d$ and any $f \in \textnormal{Lip}_{CC}(M)$,
\begin{equation}
\label{weakequation}
|\langle [T X_j,f]\chi_\varepsilon, \chi_\varepsilon\rangle| \leq C \varepsilon^{-(d+2)}\|f\|_{\textnormal{Lip}_{CC}(M)}\quad\mbox{as}\quad \epsilon\to \infty ,
\end{equation}
where $X_0,X_1,\ldots, X_d$ denotes the frame in the Heisenberg chart.
\end{lem}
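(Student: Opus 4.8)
The plan is to bound the pairing by hand, using only the $L^2$-boundedness of $T$ from Theorem~\ref{l2andwhatnot}, after stripping off the two scalings hidden in the dilated cut-off. Write $B_\varepsilon:=\mathrm{supp}\,\chi_\varepsilon$. For $\varepsilon$ large, $\delta_\varepsilon$ concentrates $\chi_\varepsilon$ near $u$, and (using Propositions~\ref{volumeasy} and~\ref{dccvskorga}) $B_\varepsilon$ is contained in a small Carnot--Carath\'eodory ball about $u$ with
\[
\sup_{x\in B_\varepsilon}\rd_{CC}(u,x)\lesssim \varepsilon^{-1},\qquad \|\chi_\varepsilon\|_{L^2(M)}^2\sim\varepsilon^{-(d+2)},\qquad \|X_j\chi_\varepsilon\|_{L^\infty(M)}\lesssim\varepsilon,
\]
the last estimate because $X_j$ has Heisenberg order one while $\chi_\varepsilon$ varies on the Carnot--Carath\'eodory scale $\varepsilon^{-1}$ in the privileged chart. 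The crucial first step is the observation that $[TX_j,f]=[TX_j,\tilde f]$ for $\tilde f:=f-f(u)$, since commutators annihilate constants; this $\tilde f$ is small on $B_\varepsilon$, namely $\|\tilde f\|_{L^\infty(B_\varepsilon)}\le |f|_{\textnormal{Lip}_{CC}(M)}\sup_{B_\varepsilon}\rd_{CC}(u,\cdot)\lesssim\varepsilon^{-1}\|f\|_{\textnormal{Lip}_{CC}(M)}$.

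Next I would use the Leibniz identity $[TX_j,\tilde f]=T\,M_{X_j f}+[T,\tilde f]\,X_j$, where $M_{X_j f}$ is multiplication by $X_j f=X_j\tilde f\in L^\infty(M)$, with $\|X_j f\|_{L^\infty}\lesssim |f|_{\textnormal{Lip}_{CC}(M)}$ by the characterisation of $\textnormal{Lip}_{CC}(M)$ via horizontal derivatives. For the first summand, Cauchy--Schwarz and Theorem~\ref{l2andwhatnot} give
\[
|\langle T\,M_{X_j f}\,\chi_\varepsilon,\chi_\varepsilon\rangle|\le \|T\|_{\Bo(L^2(M))}\,\|X_j f\|_{L^\infty}\,\|\chi_\varepsilon\|_{L^2}^2\lesssim \varepsilon^{-(d+2)}\|f\|_{\textnormal{Lip}_{CC}(M)}.
\]
For the second summand I expand
\[
\langle [T,\tilde f]\,X_j\chi_\varepsilon,\chi_\varepsilon\rangle=\langle T(\tilde f\,X_j\chi_\varepsilon),\chi_\varepsilon\rangle-\langle T(X_j\chi_\varepsilon),\overline{\tilde f}\,\chi_\varepsilon\rangle
\]
and estimate both terms by Cauchy--Schwarz and Theorem~\ref{l2andwhatnot}, using $\|\tilde f\,X_j\chi_\varepsilon\|_{L^2}\le\|\tilde f\|_{L^\infty(B_\varepsilon)}\|X_j\chi_\varepsilon\|_{L^2}$ and $\|\overline{\tilde f}\,\chi_\varepsilon\|_{L^2}\le\|\tilde f\|_{L^\infty(B_\varepsilon)}\|\chi_\varepsilon\|_{L^2}$ together with $\|X_j\chi_\varepsilon\|_{L^2}\lesssim\varepsilon\|\chi_\varepsilon\|_{L^2}$. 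In each of these terms the factor $\varepsilon^{-1}$ from $\tilde f$ exactly cancels the factor $\varepsilon$ coming from differentiating $\chi_\varepsilon$, leaving a bound $\lesssim\|f\|_{\textnormal{Lip}_{CC}(M)}\|\chi_\varepsilon\|_{L^2}^2\lesssim\varepsilon^{-(d+2)}\|f\|_{\textnormal{Lip}_{CC}(M)}$. Adding up the three contributions yields~\eqref{weakequation}.

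The estimate sits precisely at the critical scaling, and this is the point that needs care: a naive application of Cauchy--Schwarz with $f$ in place of $\tilde f$ would lose a full power of $\varepsilon$, which is fatal as $\varepsilon\to\infty$. What rescues the argument is the interplay of two facts — that the commutator is unchanged if we subtract the constant $f(u)$, so that the relevant multiplier is $O(\varepsilon^{-1})$ on $\mathrm{supp}\,\chi_\varepsilon$, and that in privileged coordinates $X_j$ has Heisenberg order one, so that $X_j\chi_\varepsilon$ is only $O(\varepsilon)$ larger than $\chi_\varepsilon$ in $L^2$ (rather than $O(\varepsilon^2)$, which a vertical direction would give). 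The remaining bookkeeping — the lower-order error terms produced when $X_j$ acts on the dilate $\chi_\varepsilon$ and the passage between the chart and the manifold — is routine.
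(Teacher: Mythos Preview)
Your argument is correct and in fact cleaner than the paper's. Both proofs start with the same Leibniz decomposition $[TX_j,f]=T\,M_{X_jf}+[T,f]X_j$ and handle the first summand identically via the $L^2$-boundedness of $T$ and $\|X_jf\|_{L^\infty}\lesssim|f|_{\textnormal{Lip}_{CC}}$. The difference lies in the treatment of $\langle[T,f]X_j\chi_\varepsilon,\chi_\varepsilon\rangle$. The paper reduces $T$ to its principal part $T_0$ (absorbing the order $-1$ remainder into a bounded operator $\widetilde{T_1}X_j$), uses the exact homogeneity $X_j\chi_\varepsilon=\varepsilon(X_j\chi)_\varepsilon$ of the model frame, and then proves the pointwise bound $\|[T_0,f](X_j\chi)_\varepsilon\|_{L^\infty}\lesssim\varepsilon^{-1}\|f\|_{\textnormal{Lip}_{CC}}$ by a direct estimate on the Schwartz kernel of $T_0$. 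You instead subtract the constant $f(u)$, so that $\tilde f$ is $O(\varepsilon^{-1})$ on $\mathrm{supp}\,\chi_\varepsilon$, and then estimate the two halves of $\langle[T,\tilde f]X_j\chi_\varepsilon,\chi_\varepsilon\rangle$ by Cauchy--Schwarz and nothing more than the $L^2$-boundedness of $T$ from Theorem~\ref{l2andwhatnot}. Your route is more elementary: it never touches the kernel of $T$, needs no reduction to the principal part, and makes the scaling balance ($\varepsilon^{-1}$ from $\tilde f$ against $\varepsilon$ from $X_j\chi_\varepsilon$) completely transparent. The paper's kernel computation, on the other hand, is consistent with the Calder\'on--Zygmund framework used in the surrounding lemmas and yields a slightly stronger $L^\infty$ bound as a byproduct.
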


We denote the optimal constant in \eqref{weakequation} by $C_{\ref{weakbddness}}$.

\begin{proof}
Again, the problem can be localized in a chart $U$. From two short computations, we have $\|\chi_\varepsilon\|_{L^2(U)}^2 \leq C \varepsilon^{-(d+2)}$ and
$$\langle [T X_j,f]\chi_\varepsilon, \chi_\varepsilon\rangle = \langle [T,f]X_j\chi_\varepsilon, \chi_\varepsilon\rangle+\langle TX_j(f)\chi_\varepsilon, \chi_\varepsilon\rangle\ .$$
The operator $T$ is $L^2$-bounded and $|\langle TX_j(f)\chi_\varepsilon, \chi_\varepsilon\rangle|\leq \|T\|_{\Bo(L^2(M))} \|X_j(f)\|_{L^{\infty}(M)}\|\chi_\varepsilon\|_{L^2(M)}^2$. It therefore suffices to consider the first term.

The kernel of $T$ minus its principal part $T_0$ is the kernel of an operator $\widetilde{T_1} \in \Psi^{-1}_H(U)$. As $\widetilde{T_1}X_j \in \Psi^0_H(U)$ is bounded, we may assume $T = T_0$. Write the Schwartz kernel as $k_T(x,y) = K_T(x,x-y)$.
Further, we have $[T,f]X_j \chi_\epsilon = \epsilon[T,f](X_j\chi)_\epsilon$.
Altogether, it suffices to show the estimate
$$\|[T_0,f](X_j \chi)_\varepsilon\|_{L^\infty(M)} \lesssim \varepsilon^{-1}\|f\|_{\textnormal{Lip}_{CC}(M)}\ .$$
For $\varepsilon>1$, we estimate
\begin{align*}
|([T_0,f](X_j& \chi)_\varepsilon)_{\varepsilon^{-1}}(x)| = \left|\int  \left[f(y) - f_{\varepsilon^{-1}}(x)\right] K_T\left(\frac{x}{\epsilon}, \frac{x-y}{\epsilon}\right) \left[X_j \chi\right](\varepsilon\cdot y)\rd y\right|\\
& = \varepsilon^{-d-2}\left|\int_U \left[f_{\varepsilon^{-1}}(y)-f_{\varepsilon^{-1}}(x)\right]\varepsilon^{d+2} K_T(\varepsilon^{-1}\cdot x, x-y) \left[X_j \chi\right](y)\rd y\right|\\
& \lesssim \varepsilon^{-1}\|f\|_{\textnormal{Lip}_{CC}(M)} \|X_j \chi\|_{L^\infty(M)}\int_{\rd_{CC}(x,y)<1} \rd_{CC}(x,y)^{-d-1}\rd y \lesssim \varepsilon^{-1}\|f\|_{\textnormal{Lip}_{CC}(M)}\ .
\end{align*}
This concludes the proof.
\end{proof}

\begin{lem}
\label{calderonestimates}
For $D\in \Psi^1_H(M)$ and $f\in \textnormal{Lip}_{CC}(M)$, the integral kernel of $[D,f]$ satisfies the Calder\'{o}n-Zygmund estimate:
\[\|[D,f]\|_{CZ}\leq C_{CZ_{1}}(D)\|f\|_{\textnormal{Lip}_{CC}(M)}\ ,\]
for a constant $C_{CZ_1}(D)>0$.
\end{lem}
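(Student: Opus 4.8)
The plan is to reduce to the model case and then use the kernel estimates of Remark \ref{PsiDOprop} together with the reformulation of the $CZ_1$ norm in Remark \ref{CZ1}. First I would note that by Proposition \ref{reductionprop} one may write $D=\sum_j T_jV_j+R$ with $T_j\in\Psi^0_H(M)$, $V_j$ horizontal vector fields, and $R$ smoothing; since $[R,f]$ has a smooth (hence trivially Calder\'on--Zygmund) kernel with norm controlled by $\|f\|_{L^\infty}$, and the $CZ$ estimate is additive, it suffices to treat a single summand $[T_jV_j,f]$. Localizing in a Heisenberg chart via a partition of unity (off-diagonal pieces again contribute smooth kernels), I reduce to estimating the kernel of $[TX,f]$ where $T\in\Psi^0_H(U)$ and $X$ is a horizontal vector field from the frame.

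Next I would compute the kernel explicitly. Writing $k_{TX}(x,y)$ for the Schwartz kernel of $TX$, the kernel of $[TX,f]$ is $(f(x)-f(y))k_{TX}(x,y)$. Since $TX\in\Psi^1_H(U)$, Remark \ref{PsiDOprop} gives the bounds
\begin{align*}
|k_{TX}(x,y)|&\lesssim \rd_{CC}(x,y)^{-(d+2)},\\
|X_x k_{TX}(x,y)|&\lesssim \rd_{CC}(x,y)^{-(d+3)},\\
|X_y k_{TX}(x,y)|&\lesssim \rd_{CC}(x,y)^{-(d+3)},
\end{align*}
for any horizontal vector field $X$, uniformly away from $\partial U$. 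The size estimate $|(f(x)-f(y))k_{TX}(x,y)|\lesssim \|f\|_{\textnormal{Lip}_{CC}}\rd_{CC}(x,y)\cdot\rd_{CC}(x,y)^{-(d+3)}=\|f\|_{\textnormal{Lip}_{CC}}\rd_{CC}(x,y)^{-(d+2)}$ is immediate from Lemma and the definition of $\textnormal{Lip}_{CC}$. For the derivative bounds, by Remark \ref{CZ1} it suffices to control $X_x$ and $X_y$ of the kernel: applying the Leibniz rule, $X_x\big((f(x)-f(y))k_{TX}(x,y)\big)=(Xf)(x)\,k_{TX}(x,y)+(f(x)-f(y))\,X_x k_{TX}(x,y)$, and each term is bounded by $\|f\|_{\textnormal{Lip}_{CC}}\rd_{CC}(x,y)^{-(d+3)}$ using $\|Xf\|_{L^\infty}\lesssim|f|_{\textnormal{Lip}_{CC}}$ (Lemma on the equivalence of the Lipschitz seminorms) for the first term and the Lipschitz bound on $f(x)-f(y)$ together with the kernel derivative estimate for the second. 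The same argument applies verbatim to $X_y$. Summing over the finitely many chart and frame contributions and absorbing the smoothing remainders yields the claimed bound with $C_{CZ_1}(D)$ depending only on $D$.

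The main technical point to be careful about is the reduction to the model operator $TX$: one must check that the passage from $D$ to $\sum_j T_jV_j$, the localization by the partition of unity, and the replacement of $T$ by its principal part all produce only errors whose kernels are smooth (or at least have $CZ_1$ norm controlled by $\|f\|_{L^\infty}\le\|f\|_{\textnormal{Lip}_{CC}}$), so that nothing worse than a bounded constant times $\|f\|_{\textnormal{Lip}_{CC}(M)}$ is lost. Once this bookkeeping is in place — and it is essentially the same reduction already used in Lemma \ref{weakbddness} — the remaining work is the routine Leibniz-rule computation above, and the fact that $X$ differentiating the scalar factor $f(x)-f(y)$ in a horizontal direction gains exactly one power of $\rd_{CC}$ while the kernel estimates lose exactly one power, matching the $CZ_1$ homogeneity.
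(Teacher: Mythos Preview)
Your proof is correct and the core argument---writing the kernel of $[D,f]$ as $(f(x)-f(y))k_D(x,y)$, applying the Leibniz rule for horizontal derivatives, and invoking Remark~\ref{CZ1}---is exactly what the paper does. However, the paper's proof is more direct: the kernel estimates of Remark~\ref{PsiDOprop} apply to \emph{any} $D\in\Psi^1_H(M)$, so there is no need to first decompose $D=\sum_j T_jV_j+R$ via Proposition~\ref{reductionprop} or to localize in charts; the paper simply works with $k_D$ globally from the outset. Your reduction to the model operator $TX$ is thus unnecessary bookkeeping (though it is needed elsewhere, in the weak boundedness and $BMO$ estimates). Also note a small slip in your size estimate: you quoted $|k_{TX}|\lesssim \rd_{CC}^{-(d+2)}$ but then used $\rd_{CC}^{-(d+3)}$ in the displayed computation; either exponent gives a bound that suffices for the $CZ$ size condition on a compact manifold, and the paper in fact records the sharper bound $|k_D(x,y)(f(y)-f(x))|\lesssim \|f\|_{\textnormal{Lip}_{CC}}\rd_{CC}(x,y)^{-(d+1)}$.
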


\begin{proof}
If $k_D(x,y)$ denotes the Schwartz kernel of $D$, the Schwartz kernel of $[D,f]$ is given by $k_D(x,y)(f(y)-f(x))$. From
Remark \ref{PsiDOprop}, for any horizontal vector field $X$ we have
$$|k_D(x,y)| \lesssim \frac{1}{\rd_{CC}(x,y)^{d+2}}\quad\mbox{and} \quad |X_{x} k_D(x,y)| \lesssim \frac{1}{\rd_{CC}(x,y)^{d+3}}.$$
This results in
$$\left|k_D(x,y)(f(y)-f(x))\right| \lesssim \frac{\|f\|_{\textnormal{Lip}_{CC}(M)}}{\rd_{CC}(x,y)^{d+1}}\ .$$
On the other hand, for a horizontal vector field $X$,
$$|X_{x}\!\!\left( k_D(x,y)(f(y)-f(x))\right)| \lesssim \frac{|f(y)-f(x)|}{\rd_{CC}(x,y)^{d+3}} + \frac{|X_{x} f(x)|}{\rd_{CC}(x,y)^{d+2}}\lesssim \frac{\|f\|_{\textnormal{Lip}_{CC}(M)}}{\rd_{CC}(x,y)^{d+2}}$$
almost everywhere. Concerning $|X_{y}\left( k(x,x-y)(f(y)-f(x))\right)|$, the estimate
$$|X_{y} k_D(x,y)| \lesssim \frac{1}{\rd_{CC}(x,y)^{d+3}}$$
similarly allows us to deduce the estimate
$$|X_{y}\left( k_D(x,y)(f(y)-f(x))\right)| \lesssim \frac{\|f\|_{\textnormal{Lip}_{CC}(M)}}{\rd_{CC}(x,y)^{d+2}}$$
almost everywhere. By Remark \ref{CZ1}, these estimates imply the assertion.
\end{proof}

Based on the lemmas above and Theorem \ref{thmhytmartikain}, we can now prove Theorem \ref{mainbdd}. Let us start by considering how Lemma \ref{bmocontinuity} and Lemma \ref{weakbddness} fit into the set up of Subsection \ref{t1subsection}. Take a $T\in \Psi^0_H(M)$ and a horizontal vector field $X_j$. By Lemma \ref{bmocontinuity}, there is a constant $C_{\ref{bmocontinuity}}(T)>0$ only depending on $T$ such that
\[\|[TX_j,f]1\|_{BMO^2(M)}= \|T(X_j(f))\|_{BMO^2(M)}\leq C_{\ref{bmocontinuity}}(T)\|X_j(f)\|_{L^\infty(M)}\leq C_{\ref{bmocontinuity}}(T)|f|_{\textnormal{Lip}_{CC}(M)}.\]
Furthermore, $[TX_j,f]$ is weakly bounded by Lemma \ref{weakbddness}, and in the notation of \cite[Section 2.5]{hytmart},
\[\|[TX_j,f]\|_{WBP_1}\leq C_{\ref{weakbddness}}(T)\|f\|_{\textnormal{Lip}_{CC}(M)}.\]
Here $C_{\ref{weakbddness}}(T)$ denotes the constant of Lemma \ref{weakbddness}.

\begin{proof}
For simplicity assume that $f$ is real-valued. By Proposition \ref{reductionprop} we can assume that $D=\sum_j T_jV_j$ and $D^*=\sum_j T_j'V_j$ for $T_j, T_j'\in \Psi^0_H(M)$. Take a function $\chi\in C^\infty(\R)$ such that $\chi(r)=1$ for $r>1$ and $\chi(r)=0$ for $r<1/2$. Define $T_\epsilon$ as the operator whose integral kernel is
$$\chi\left(\epsilon^{-1}\rd_{CC}(x,y)\right)\left(f(x)-f(y)\right)k_D(x,y),$$
where $k_D$ is the Calder\'{o}n-Zygmund kernel of $D$. For a fixed $\epsilon>0$, $T_\epsilon$ is bounded; even of Hilbert-Schmidt class. Combining the observations above with \cite[Theorem 2.3]{hytmart}, there is a constant $C$ only depending on $d$ such that
\begin{align*}
\|T_\epsilon\|&_{\Bo(L^2(M))} \leq C(\|[D,f]1\|_{BMO^2}+\|[D^*,f]1\|_{BMO^2}+\|[TV_j,f]\|_{WBP_1}+\|[D,f]\|_{CZ})\leq \\
&\leq C\cdot \left(\sum_j \left[C_{\ref{bmocontinuity}}(T_j)+C_{\ref{bmocontinuity}}(T_j')+C_{\ref{weakbddness}}(T_j)+C_{\ref{weakbddness}}(T'_j)\right]+C_{KZ}(D)\right)\|f\|_{\textnormal{Lip}_{CC}(M)}.
\end{align*}
Let $T$ be any strong limit of a strongly convergent subsequence of $T_\epsilon$ as $\epsilon \to 0$ and denote the distributional kernel of $D$ by $\tilde{k}_D$. The kernel of $[D,f]-T$ is of the form $(\tilde{k}_D(x,y) - k_D(x,y))\left(f(x)-f(y)\right)$, where $\tilde{k}_D(x,y) - k_D(x,y)$ is a homogeneous distribution of order $1$ supported in $x=y$. The norm of the associated operator is hence norm bounded by a multiple of $\|f\|_{\textnormal{Lip}_{CC}(M)}$. The theorem follows.
\end{proof}

\large
\section{Commutators of Lipschitz functions with zeroth order $\Psi_H$DOs}
\normalsize
\label{provingweakschattenlipschitz}

In this section we prove that Theorem \ref{mainbdd} implies the first part of Theorem \ref{mainweakschatten}. Namely, we prove that for any $T\in \Psi^0_H(M;E)$ there is a constant $C_{T}>0$ such that any $a\in \textnormal{Lip}_{CC}(M)$ satisfies the estimate
\begin{equation}
\label{tcomma}
 \|[T,a]\|_{\Bo(W^{-1/2}_H(M;E),W^{1/2}_H(M;E))}\leq C_{T}\|a\|_{\textnormal{Lip}_{CC}(M)}.
\end{equation}
The Lipschitz case of Theorem \ref{mainweakschatten} follows from Equation \eqref{sublapasympt} and Equation \eqref{tcomma} above. We will throughout this section assume that $M$ is a closed sub-Riemannian $H$-manifold of dimension $d+1$ and let $\Delta_H\in \Psi^2_H(M)$ denote the sub-Laplacian operator constructed in Subsection \ref{subsectionschatten}. It suffices to consider the case $E=\C^l$ -- a trivial bundle.

\begin{lem}
If there are estimates of the form \eqref{tcomma} for any $l\in \N$ and any $F\in \Psi^0_H(M;\C^l)$ such that
$$F^*-F=F^2-1=0,$$
then there are estimates of the form \eqref{tcomma} for all $l\in \N$ and $T\in \Psi^0_H(M;\C^l)$.
\end{lem}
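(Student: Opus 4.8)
The plan is to reduce the case of a general $T\in \Psi^0_H(M;\C^l)$ to that of a self-adjoint involution by a standard doubling-and-dilation trick modelled on the Bott projection construction. First I would replace $T$ by the selfadjoint operator $\frac{1}{2}\left(\begin{smallmatrix} 0 & T \\ T^* & 0\end{smallmatrix}\right)\in \Psi^0_H(M;\C^{2l})$; since $\|[\,\cdot\,,a]\|$ and $\|a\|_{\textnormal{Lip}_{CC}(M)}$ only grow by harmless constants under passing to the adjoint, to a direct summand, or to a $2\times 2$ matrix, it suffices to prove \eqref{tcomma} for selfadjoint $T$. So from now on assume $T=T^*\in \Psi^0_H(M;\C^l)$.

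Next I would upgrade a selfadjoint $T$ to a selfadjoint involution. After rescaling we may assume $\|T\|_{\Bo(L^2)}\leq 1$, hence $0\leq 1-T^2\leq 1$ as an operator (and, being a selfadjoint element of $\Psi^0_H$, the operator $\sqrt{1-T^2}$ again lies in $\Psi^0_H(M;\C^l)$ by the functional calculus within the Heisenberg calculus — one may also simply work with the real-analytic function $\sqrt{1-t^2}$ on a neighbourhood of the spectrum). Set
\[
F:=\begin{pmatrix} T & \sqrt{1-T^2} \\ \sqrt{1-T^2} & -T\end{pmatrix}\in \Psi^0_H(M;\C^{2l}).
\]
Then $F^*=F$ and a direct computation gives $F^2=1$. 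By hypothesis, \eqref{tcomma} holds for this $F$ with some constant $C_F$, i.e.
\[
\|[F,a]\|_{\Bo(W^{-1/2}_H(M;\C^{2l}),W^{1/2}_H(M;\C^{2l}))}\leq C_F\|a\|_{\textnormal{Lip}_{CC}(M)}.
\]
Writing out the matrix $[F,a]$ (with $a$ acting diagonally) shows that its $(1,1)$-corner is exactly $[T,a]$, and restricting a bounded operator to a diagonal block of the module decomposition $\C^{2l}=\C^l\oplus\C^l$ does not increase the $\Bo(W^{-1/2}_H,W^{1/2}_H)$-norm. Hence \eqref{tcomma} holds for $T$ with constant $C_T:=C_F$, after tracking the rescaling factor $\|T\|_{\Bo(L^2)}$ (or $1$, whichever is larger) absorbed into the constant.

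The only genuine point requiring care — and what I would regard as the main obstacle — is that $\sqrt{1-T^2}$ really is again a Heisenberg $\Psi$DO of order zero, so that $F\in\Psi^0_H(M;\C^{2l})$ and the hypothesis applies to it; this is where one uses that $\Psi^0_H(M)$ is closed under the holomorphic (indeed real-analytic, for selfadjoint elements) functional calculus, which follows from Theorem \ref{l2andwhatnot} together with the spectral invariance of the Heisenberg calculus as a $\Psi^*$-subalgebra of $\Bo(L^2(M))$. The remaining steps — permanence of \eqref{tcomma} under adjoints, finite direct sums, and compression to diagonal blocks — are immediate from the definition of the operator norm on $\Bo(W^{-1/2}_H,W^{1/2}_H)$ and the fact that $a$ acts as a scalar (hence commutes with the matrix units), so I would not spell them out in detail beyond noting the constants.
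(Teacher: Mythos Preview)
Your argument is correct and takes a genuinely different route from the paper's. One small repair: you should rescale to $\|T\|_{\Bo(L^2)}<1$ strictly, not $\le 1$; otherwise $0$ may lie in the spectrum of $1-T^2$, the function $t\mapsto\sqrt{1-t^2}$ fails to be analytic on a neighbourhood of $\sigma(T)$, and the functional-calculus step placing $\sqrt{1-T^2}$ in $\Psi^0_H$ breaks down. With $\|T\|<1$ the operator $1-T^2$ is strictly positive and $H$-elliptic (its principal symbol $1-\sigma_0(T)^2$ is bounded below since $\|\sigma_0(T)\|\le\|T\|<1$), so the square root lies in $\Psi^0_H$ by the complex-powers machinery of \cite[Chapter 5]{ponge}; your appeal to spectral invariance is then justified.

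The paper proceeds differently. It first passes to the self-adjoint, strictly positive, $H$-elliptic $\tilde T=\left(\begin{smallmatrix}\lambda & T\\ T^* & \lambda\end{smallmatrix}\right)$ (large $\lambda$) and builds a self-adjoint involution on $\C^{4l}$ from the Bott-projection matrix
\[
F_T=2(1+\tilde T^2)^{-1}\begin{pmatrix}\tilde T^2 & \tilde T\\ \tilde T & 1\end{pmatrix}-1\ .
\]
Reading off the $(2,2)$-corner of $[F_T,a]$ gives the estimate for $[\tilde T^2,a]$, and a Cauchy integral $\tilde T=\tfrac{1}{2\pi i}\oint_\Gamma\lambda^{1/2}(\lambda-\tilde T^2)^{-1}\,\rd\lambda$ then transfers it to $[\tilde T,a]$. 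The advantage of this route is that $F_T$ is built from \emph{rational} functions of $\tilde T$ only---one inverts the manifestly $H$-elliptic $1+\tilde T^2$ and never needs to argue that a non-rational expression like $\sqrt{1-T^2}$ stays in the calculus; the square root enters only at the very end, as an operator-norm contour integral, after the commutator bound is already established. Your construction is shorter, stays in $\C^{2l}$, and avoids the contour-integral detour from $\tilde T^2$ back to $\tilde T$, at the price of invoking a slightly less elementary functional-calculus fact up front.
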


\begin{proof}
Any element $T\in \Psi^0_H(M;\C^l)$ admits an estimate of the form \eqref{tcomma} if and only if the operator
\[\tilde{T}:=\begin{pmatrix} \lambda& T\\T^*& \lambda\end{pmatrix}\in \Psi^0_H(M;\C^{2l})\]
admits an estimate of the form \eqref{tcomma}. Taking $\lambda>0$ large enough, we can assume that $\tilde{T}$ is an $H$-elliptic operator on $L^2(M;\C^{2l})$, furthermore self-adjoint and strictly positive, i.e. there is an $\epsilon>0$ such that $\tilde{T}-\epsilon$ is positive. Consider the self-adjoint operator
\[F_T:=2(1+\tilde{T}^2)^{-1}\begin{pmatrix} \tilde{T}^2& \tilde{T}\\\tilde{T}& 1\end{pmatrix}-1\in \Psi^0_H(M;\C^{4l}).\]
A direct computation gives 
\[\left((1+\tilde{T}^2)^{-1}\begin{pmatrix} \tilde{T}^2& \tilde{T}\\\tilde{T}& 1\end{pmatrix}\right)^2=(1+\tilde{T}^2)^{-2}\begin{pmatrix} \tilde{T}^4+\tilde{T}^2& \tilde{T}^3+\tilde{T}\\\tilde{T}^3+\tilde{T}& 1+\tilde{T}^2\end{pmatrix}=(1+\tilde{T}^2)^{-1}\begin{pmatrix} \tilde{T}^2& \tilde{T}\\\tilde{T}& 1\end{pmatrix},\]
so $F_T^2=1$.

If an estimate of the form \eqref{tcomma} holds true for any $l\in \N$ and $F\in \Psi^0_H(M;\C^l)$ such that $F^*-F=F^2-1=0$, then it does in particular hold true for $F_T$. However,
\[[F_T,a]=\begin{pmatrix} *&*\\ * & 2[(1+\tilde{T}^2)^{-1},a]\end{pmatrix}=\begin{pmatrix} *&*\\ * & -2(1+\tilde{T}^2)^{-1}[\tilde{T}^2,a](1+\tilde{T}^2)^{-1}\end{pmatrix}.\]
We obtain the estimate
\begin{align*}
 \|[\tilde{T}^2,a]& \|_{\Bo(W^{-1/2}(M;\C^{2l}),W^{1/2}_H(M;\C^{2l}))} \\
&  \leq\|1+\tilde{T}^2\|_{\Bo(W^{1/2}_H,W^{1/2}_H)}\|1+\tilde{T}^2\|_{\Bo(W^{-1/2}_H,W^{-1/2}_H)}\|[F_T,a]\|_{\Bo(W^{-1/2}_H,W^{1/2}_H)}\\
& \leq C_{F_T}\|a\|_{\textnormal{Lip}_{CC}(M)}.
\end{align*}

On the other hand, since $\tilde{T}$ is strictly positive, there is a compact contour $\Gamma$ in the right half plane such that
\[\tilde{T}=\frac{1}{2\pi i}\int_\Gamma \lambda^{1/2}(\lambda-\tilde{T}^2)^{-1}\rd \lambda.\]
Therefore, the commutator $[\tilde{T},a]$ can be expressed as
\[[\tilde{T},a]=\frac{1}{2\pi i}\int_\Gamma \lambda^{1/2}(\lambda-\tilde{T}^2)^{-1}[\tilde{T}^2,a](\lambda-\tilde{T}^2)^{-1}\rd \lambda.\]
Since $\Gamma$ is compact, we arrive at the estimate
$$ \|[\tilde{T},a]\|_{\Bo(W^{-1/2}_H(M;\C^{2l}),W^{1/2}_H(M;\C^{2l}))}\leq C \|[\tilde{T}^2,a]\|_{\Bo(W^{-1/2}_H(M;\C^{2l}),W^{1/2}_H(M;\C^{2l}))}$$
for some $C=C_{T,\Gamma}>0$.
\end{proof}

\begin{lem}
For any $l\in \N$ and $F\in \Psi^0_H(M;\C^l)$ such that $F^2-1=F^*-F=0$ there is an estimate of the form \eqref{tcomma}.
\end{lem}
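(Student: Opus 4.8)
The plan is to lift $F$ to a first-order operator in the Heisenberg calculus and apply Theorem \ref{mainbdd}. Write $\Lambda:=\sqrt{1+\Delta_H}\in\Psi^1_H(M;\C^l)$, acting diagonally on $\C^l$; it is positive, self-adjoint and $H$-elliptic. Since $F^*=F$ and $F^2=1$, i.e.\ $F$ is a symmetry, the matrix
\[\mathfrak{D}_F:=\begin{pmatrix}0&F\Lambda\\ \Lambda F&0\end{pmatrix}\in\Psi^1_H(M;\C^{2l})\]
is self-adjoint and $H$-elliptic: its square is $\mathrm{diag}\bigl(F(1+\Delta_H)F,\,1+\Delta_H\bigr)$, which differs from $(1+\Delta_H)\oplus(1+\Delta_H)$ only by an $L^2$-bounded symmetry-conjugation, so that also $(1+\mathfrak{D}_F^2)^{-1/2}\sim (1+\Delta_H)^{-1/2}\otimes 1_{2l}$. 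This is the operator of Equation \eqref{dffromt}. In particular $F\Lambda$ and $\Lambda F$ lie in $\Psi^1_H(M;\C^l)$, so Theorem \ref{mainbdd} applies to each of $F\Lambda$, $\Lambda F$ and $\Lambda$: for $f\in\textnormal{Lip}_{CC}(M)$ the commutators $[F\Lambda,f]$, $[\Lambda F,f]$ and $[\Lambda,f]$ extend to bounded operators on $L^2(M;\C^l)$ with norms $\lesssim\|f\|_{\textnormal{Lip}_{CC}(M)}$.

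Next I would extract a two-sided estimate for $[F,f]$ itself. On $C^\infty(M;\C^l)$ one has the algebraic identities $[F\Lambda,f]=[F,f]\Lambda+F[\Lambda,f]$ and $[\Lambda F,f]=\Lambda[F,f]+[\Lambda,f]F$. Combining them with the $L^2$-boundedness of $F$ (Theorem \ref{l2andwhatnot}) and the previous step shows that both $[F,f]\Lambda$ and $\Lambda[F,f]$ extend to bounded operators on $L^2(M;\C^l)$ with norms $\lesssim\|f\|_{\textnormal{Lip}_{CC}(M)}$. Equivalently, $[F,f]$, initially defined on $C^\infty(M;\C^l)$, extends to bounded operators $W^{-1}_H(M;\C^l)\to L^2(M;\C^l)$ and $L^2(M;\C^l)\to W^1_H(M;\C^l)$, the respective norms being controlled by $\|f\|_{\textnormal{Lip}_{CC}(M)}$.

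Finally I would interpolate. The scale $W^s_H(M;\C^l)$ is the Hilbert scale of the positive self-adjoint operator $1+\Delta_H$, so $[W^{-1}_H,L^2]_{1/2}=W^{-1/2}_H$ and $[L^2,W^1_H]_{1/2}=W^{1/2}_H$, and the two bounded extensions of $[F,f]$ are compatible, since they agree on the common dense subspace $C^\infty(M;\C^l)$. Complex interpolation then yields $\|[F,f]\|_{\Bo(W^{-1/2}_H(M;\C^l),W^{1/2}_H(M;\C^l))}\lesssim\|f\|_{\textnormal{Lip}_{CC}(M)}$, which is \eqref{tcomma}. For $i\R$-valued $f$ this is exactly the operator inequality \eqref{lipincl}: the bound $\Lambda^{1/2}[F,f]\Lambda^{1/2}\in\Bo(L^2)$ together with self-adjointness of $[F,f]$ gives $\pm[F,f]\leq C\|f\|_{\textnormal{Lip}_{CC}(M)}(1+\Delta_H)^{-1/2}$, and packaging it this way through the self-adjoint $H$-elliptic lift $\mathfrak{D}_F$ is where \cite{sww} enters. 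The substantive work has already been done in Theorem \ref{mainbdd}; the main remaining obstacle is the bookkeeping around the interpolation step, namely verifying the compatibility of the two extensions and that the $\Delta_H$-Sobolev scale interpolates as claimed, together with checking self-adjointness and $H$-ellipticity of $\mathfrak{D}_F$ so that the operator-inequality form \eqref{lipincl} may be quoted from \cite{sww} for the applications.
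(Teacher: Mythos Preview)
Your argument is correct but takes a genuinely different route from the paper. The paper proceeds by computing the phase $\tilde{F}=D_F|D_F|^{-1}=\begin{pmatrix}0&F\\F&0\end{pmatrix}$ and then invoking the operator inequality of \cite{sww}, namely $-\|[D_F,a]\|\,|D_F|^{-1}\leq[\tilde{F},a]\leq\|[D_F,a]\|\,|D_F|^{-1}$ for $a\in\textnormal{Lip}_{CC}(M,i\R)$; a quadratic-form argument converts this into the $W^{-1/2}_H\to W^{1/2}_H$ bound. You instead apply Theorem~\ref{mainbdd} directly to the three first-order operators $F\Lambda$, $\Lambda F$, $\Lambda$, use the Leibniz rule to extract $L^2$-bounds on $[F,f]\Lambda$ and $\Lambda[F,f]$, and then interpolate in the Hilbert scale of $1+\Delta_H$. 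This is more elementary: it avoids the \cite{sww} result entirely and handles complex-valued $f$ without splitting into real and imaginary parts. Notably, your core argument (steps two and three) uses neither $F^2=1$ nor $F^*=F$ --- only that $F\in\Psi^0_H$ is $L^2$-bounded --- so it in fact proves \eqref{tcomma} for an arbitrary $T\in\Psi^0_H(M;\C^l)$ and renders the preceding reduction lemma unnecessary. The matrix $\mathfrak{D}_F$ you introduce plays no role in your main argument and could be omitted; you only need that $\Lambda\in\Psi^1_H$ (which is \cite[Chapter~5]{ponge}) so that $F\Lambda,\Lambda F\in\Psi^1_H$ by the filtered algebra structure. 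What the paper's route buys is the explicit operator-inequality form \eqref{lipincl}, which is conceptually tied to the spectral-triple picture; your route buys simplicity and a shorter path to \eqref{tcomma}.
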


\begin{proof}
Consider the $H$-elliptic self-adjoint operator
\begin{equation}
\label{dffromt}
D_F:=\begin{pmatrix} 0& \sqrt{1+\Delta_H}F\\ F\sqrt{1+\Delta_H}& 0\end{pmatrix}.
\end{equation}
By \cite[Chapter 5]{ponge}, $D_F\in \Psi^1_H(M;\C^{2l})$. The square of $D_F$ is easily computed to be
\[D_F^2=\begin{pmatrix} 1+\Delta_H&0\\ 0&F(1+\Delta_H)F\end{pmatrix}.\]
Thus, $D_F^2$ is a strictly positive self-adjoint operator, and $|D_F|:=(D_F^2)^{1/2}$ is invertible. We define the bounded operator
\[\tilde{F}:=D_F|D_F|^{-1}=\begin{pmatrix} 0&F\\ F& 0\end{pmatrix}\ .\]

The following operator inequality was proven in \cite[Proposition 1]{sww} for any $a\in \textnormal{Lip}_{CC}(M,i\mathbb{R})$:
\[-\|[D_F,a]\|_{\Bo(L^2(M;\C^{2l}))}|D_F|^{-1}\leq [\tilde{F},a]\leq \|[D_F,a]\|_{\Bo(L^2(M;\C^{2l}))}|D_F|^{-1}.\]
The quadratic form $f\mapsto \langle |D|^{-1}f,f\rangle_{L^2(M;\C^{2l})}$ defines a norm on $W_H^{-1/2}(M;\C^{2l})$, which is equivalent to its usual norm. Hence, an argument using quadratic forms implies that for suitable constants $C_D,C_D'>0$,
\begin{align*}
\| [\tilde{F},a]& \|_{\Bo(W^{-1/2}_H(M;\C^{2l}),W^{1/2}_H(M;\C^{2l}))}\\
&  \leq C_D\|[D_F,a]\|_{\Bo(L^2(M;\C^{2l}))}\leq C_D'\|a\|_{\textnormal{Lip}_{CC}(M)}.
\end{align*}
\end{proof}

\large
\section{Interpolation of Lipschitz and Besov spaces on Heisenberg manifolds}
\normalsize
\label{interpolationsection}

In order to deduce Theorem \ref{mainweakschatten} from the results of Section \ref{provingweakschattenlipschitz} we will interpolate between $C(M)$ and $\textnormal{Lip}_{CC}(M)$. We will do this by means of an anisotropic Besov space scale. The proofs in this section are standard techniques of Besov spaces and interpolation and are included for the convenience of the reader. The main theorem of this section is as follows.

\begin{thm}
\label{interpolatingwithlipschitz}
For a closed sub-Riemannian $H$-manifold,
\[[C(M),\textnormal{Lip}_{CC}(M)]_{\theta,\infty}=C^\theta_{CC}(M),\quad\theta\in (0,1).\]
\end{thm}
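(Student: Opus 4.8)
The plan is to prove Theorem~\ref{interpolatingwithlipschitz} by reducing the interpolation identity to a local, coordinate statement via the charts $(U_j)$ and the partition of unity $(\chi_j)$ introduced before Lemma~\ref{equivalencesofmetrics}, and then identifying the relevant local space with a classical anisotropic Besov space on $\R^{d+1}$ for which the interpolation property is known. By Lemma~\ref{equivalencesofmetrics} and the proposition identifying $|\cdot|_{C^s_{CC}(M)}$ with the Koranyi semi-norm, it suffices to work with the quasi-metric $\rd_K$ (or $\rd_{an}$) in each chart; since multiplication by $\chi_j$ and pullback by $\epsilon_j$ are bounded operations on $C(M)$, on $\textnormal{Lip}_{CC}(M)$, and on $C^\theta_{CC}(M)$ simultaneously, the full statement follows from the corresponding statement for compactly supported functions on $(\R^{d+1},|\cdot|_H)$. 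So the first step is to set up this retraction/co-retraction (in the sense of interpolation theory) so that $[C(M),\textnormal{Lip}_{CC}(M)]_{\theta,\infty}$ is a complemented subspace of a finite product of local spaces.

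Next I would introduce the anisotropic Besov scale $B^{s}_{\infty,\infty}(\R^{d+1})$ adapted to the dilations $\lambda.(t,z)=(\lambda^2 t,\lambda z)$, defined via a Littlewood--Paley decomposition that is anisotropically homogeneous (dyadic annuli in the Koranyi gauge), as is standard e.g.\ in \cite{muellerbesov}. The two facts I would invoke or prove are: (i) for $s\in(0,1)$, $B^{s}_{\infty,\infty}$ coincides with the H\"older--Zygmund-type space defined by the finite-difference semi-norm $\sup_{x\neq y}|f(x)-f(y)|/|x-y|_H^{s}$, i.e.\ with the local model of $C^{s}_{CC}$, with equivalence of norms; and (ii) the endpoint identifications $B^0_{\infty,\infty}\supseteq C$ and $B^1_{\infty,\infty}\supseteq\textnormal{Lip}_{CC}$ together with the real interpolation identity $[B^{0}_{\infty,\infty},B^{1}_{\infty,\infty}]_{\theta,\infty}=B^{\theta}_{\infty,\infty}$. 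The interpolation identity for the Besov scale itself is a routine consequence of the retraction onto a weighted $\ell^\infty(L^\infty)$ sequence space provided by the Littlewood--Paley projections, exactly as in the isotropic Euclidean theory; the anisotropy only changes the frequency scaling, not the structure of the argument.

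The subtlety—and the step I expect to be the main obstacle—is the endpoint at $\theta=1$: $C(M)$ is not $B^0_{\infty,\infty}$ and $\textnormal{Lip}_{CC}(M)$ is not literally $B^1_{\infty,\infty}$ (the Zygmund class is strictly larger than the Lipschitz class), so one cannot simply quote a textbook identity. The standard remedy is to use the characterization of real interpolation spaces by the $K$-functional directly: one shows that for $f\in C(M)$, $K(\tau,f;C,\textnormal{Lip}_{CC})\sim \omega_{\rd_K}(f,\tau)$, the modulus of continuity in the Koranyi quasi-metric, by constructing an explicit near-optimal decomposition $f=f_0+f_1$ with $f_1$ a suitable mollification (convolution against a bump dilated in the Koranyi sense) so that $\|f_1\|_{\textnormal{Lip}_{CC}}\lesssim \tau^{-1}\omega(f,\tau)$ and $\|f-f_1\|_{C}\lesssim\omega(f,\tau)$. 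Then $[C,\textnormal{Lip}_{CC}]_{\theta,\infty}$ is exactly the set of $f$ with $\sup_\tau \tau^{-\theta}\omega(f,\tau)<\infty$, i.e.\ $C^\theta_{CC}(M)$, with equivalence of norms, for every $\theta\in(0,1)$; the Besov language above is then just a convenient packaging of this modulus-of-continuity estimate rather than a black box to be cited. The only genuinely $H$-specific input needed here is Proposition~\ref{dccvskorga} and Lemma~\ref{equivalencesofmetrics}, which let the mollification be performed chart-by-chart against the homogeneous group dilations while controlling the $\rd_{CC}$-modulus of continuity globally.
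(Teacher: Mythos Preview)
Your localization step via charts and partition of unity is exactly what the paper does, and your introduction of an anisotropic Besov scale with Littlewood--Paley decomposition adapted to the dilations $\lambda.(t,z)=(\lambda^2 t,\lambda z)$ mirrors the paper's construction of $HB^s_{p,q}(\R^{d+1})$ and Lemma~\ref{csstatcs}. The divergence comes precisely at the endpoint issue you flag. You are right that the one-sided inclusions $BC\subseteq HB^0_{\infty,\infty}$ and $\textnormal{Lip}_{CC}\subseteq HB^1_{\infty,\infty}$ alone do not suffice, and you resolve this by computing the $K$-functional directly as the modulus of continuity via a mollification decomposition. This is a correct and self-contained argument.

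The paper instead stays within the Besov framework and closes the gap by a \emph{two-sided} sandwiching: it proves (Propositions~\ref{sandwichinglip} and~\ref{sandwichingc}) that $HB^s_{\infty,1}\hookrightarrow X_s\hookrightarrow HB^s_{\infty,\infty}$ for $X_0=BC$ and $X_1=\textnormal{Lip}_{CC}$, and then uses that real interpolation with parameter $(\theta,\infty)$ of the pair $(HB^0_{\infty,1},HB^1_{\infty,1})$ and of the pair $(HB^0_{\infty,\infty},HB^1_{\infty,\infty})$ both yield $HB^\theta_{\infty,\infty}=C^\theta_{CC}$ (Proposition~\ref{interpolatingbesov}), so the sandwiched spaces interpolate to the same thing. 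Your $K$-functional route is more elementary and avoids introducing the auxiliary scale $HB^s_{\infty,1}$; the paper's route has the virtue of making the Besov interpolation identity do all the work once the sandwiching is established, and the estimates needed for the $HB^s_{\infty,1}$ inclusions (Propositions~\ref{sandwichinglip}, \ref{sandwichingc}) are essentially the same mollification bounds you would use for the $K$-functional anyway.
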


\subsection{Local situation}

We first study the local situation, modeled on a given Heisenberg group structure on $\R^{d+1}$ as in Section \ref{Heisenbergsection}. \\

Take a positive function $\varphi\in C^\infty(\R^{d+1})$ such that $\supp(\varphi)\subseteq B_H(0,1)$ (the ball in the Koranyi gauge) and $\int \varphi=1$. We define a sequence $(\varphi_j)_{j\in \N}\subseteq C^\infty_c(\R^{d+1})$ by means of $\varphi_0=\varphi$ and
\[\varphi_j(x)=\varphi(2^{-j}.x).\]
The sequence $(\varphi_j)_{j\in \N}$ satisfies that $\varphi_j\to 1$ pointwise. We also set $\phi_0:=\varphi$ and $\phi_j:=\varphi_j-\varphi_{j-1}$ for $j\geq 1$. It can be verified that $B_H(0,2^{-(j+1)})\subseteq \supp \phi_j\subseteq B_H(0,2^{-j})$ and $\sum_{j=0}^\infty\phi_j=1$ (in pointwise convergence). We define the operators $\Phi_j:=Op(\phi_j)$, i.e. $\Phi_jf:=\check{\phi}_j*f$. Here we use the notation
\[\check{g}(x):=\int g(\xi)\e^{ix\xi}\rd \xi.\]

\begin{prop}
\label{phijalgebra}
The operators $\Phi_j\in \Psi^{-\infty}(\R^{d+1})$ satisfy the identity
$$\Phi_j=(\Phi_{j-1}+\Phi_j+\Phi_{j+1})\Phi_j.$$
\end{prop}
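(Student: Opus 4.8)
The plan is to pass to the frequency side, where each $\Phi_j$ is multiplication by $\phi_j$, and to reduce the operator identity to a pointwise identity of symbols.

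First I would note that $\Phi_j=Op(\phi_j)$ is a Fourier multiplier: $\widehat{\Phi_jf}=\phi_j\hat f$ up to the fixed normalising constant implicit in the definition $\Phi_jf=\check\phi_j*f$. Hence for any indices $k,l$ the composition $\Phi_k\Phi_l$ is again a Fourier multiplier, now with symbol the pointwise product $\phi_k\phi_l$; in particular $\Phi_k\Phi_l=0$ whenever $\supp\phi_k\cap\supp\phi_l=\emptyset$, and since $Op$ is injective on (continuous) symbols the asserted identity is equivalent to the scalar identity
\[\phi_j=(\phi_{j-1}+\phi_j+\phi_{j+1})\,\phi_j\]
(with the convention that the $\phi_{-1}$ term is dropped when $j=0$).

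To prove this I would invoke the two properties of the family $(\phi_j)$ recorded above: the support condition $B_H(0,2^{-(j+1)})\subseteq\supp\phi_j\subseteq B_H(0,2^{-j})$, which is arranged precisely so that $\supp\phi_k\cap\supp\phi_l=\emptyset$ whenever $|k-l|\geq 2$, together with the partition of unity $\sum_{k\geq 0}\phi_k=1$. These give
\[(\phi_{j-1}+\phi_j+\phi_{j+1})\,\phi_j=\Big(\sum_{k\geq 0}\phi_k\Big)\phi_j=\phi_j,\]
since $\phi_k\phi_j\equiv 0$ for every $k\notin\{j-1,j,j+1\}$. Applying $Op$ to both sides yields $\Phi_j=(\Phi_{j-1}+\Phi_j+\Phi_{j+1})\Phi_j$.

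There is no serious obstacle here: the only point to watch is the bookkeeping between symbols and operators, i.e.\ that composing the convolution operators $\Phi_k,\Phi_l$ really multiplies the symbols, with the same normalising constant occurring on both sides of the claimed identity so that it cancels. Everything else is a formal consequence of the dyadic (Littlewood--Paley-type) construction and the support separation of the $\phi_j$.
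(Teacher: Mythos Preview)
Your argument follows the same route as the paper: pass to the symbol level using that the $\Phi_j$ are Fourier multipliers, and then verify the pointwise identity $\phi_j=(\phi_{j-1}+\phi_j+\phi_{j+1})\phi_j$. The paper does the second step by telescoping directly from the definition, writing $\phi_{j-1}+\phi_j+\phi_{j+1}=\varphi_{j+1}-\varphi_{j-2}$ and observing this equals $1$ on $\supp\phi_j$; you instead invoke the full partition of unity $\sum_k\phi_k=1$ and then discard all but three terms by support separation. These are equivalent in spirit.

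There is, however, a small gap in your justification. You write that the inclusion $B_H(0,2^{-(j+1)})\subseteq\supp\phi_j\subseteq B_H(0,2^{-j})$ is ``arranged precisely so that $\supp\phi_k\cap\supp\phi_l=\emptyset$ whenever $|k-l|\geq 2$''. Read literally, this inclusion says each $\supp\phi_j$ is a set containing the ball $B_H(0,2^{-(j+1)})$; in particular all the supports contain a neighbourhood of the origin and are nested rather than disjoint. So the disjointness of supports does not follow from the inclusion you cite. What actually gives the separation (equivalently, what makes $\phi_{j-1}+\phi_j+\phi_{j+1}=1$ on $\supp\phi_j$) is the telescoping structure $\phi_j=\varphi_j-\varphi_{j-1}$ together with the behaviour of the dilated bumps $\varphi_k$, which is exactly what the paper exploits directly. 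If you want to keep your partition-of-unity phrasing, you should justify $\phi_k\phi_j\equiv 0$ for $|k-j|\geq 2$ from the definition of the $\phi_k$ rather than from the quoted ball inclusions.
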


\begin{proof}
Since $\Phi_j$ are Fourier multipliers, this follows from the identity
$$\phi_j=(\phi_{j-1}+\phi_j+\phi_{j+1})\phi_j,$$
which in turn holds true as $\phi_{j-1}+\phi_j+\phi_{j+1}=\varphi_{j+1}-\varphi_{j-1}=1$ on $\supp \phi_j$.
\end{proof}

The estimates in the following proposition will be crucial in relating the Besov scale to the H\"older scale.

\begin{prop}
Let $X$ be a horizontal vector field. The operators $\Phi_j$ satisfy the following estimates
\begin{align}
\label{phijplonkest}
\|\Phi_j f\|_{L^\infty}&\leq 2\|\check{\varphi}\|_{L^1} \|f\|_{L^\infty}\\
\label{phijplonkderivativeest}
\|X\Phi_jf\|_{L^\infty}&\leq  2^jC_\varphi \|f\|_{L^\infty},
\end{align}
where the constant $C_\varphi$ only depends on $\varphi$ and $X$.
\end{prop}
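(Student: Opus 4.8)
The plan is to prove the two estimates \eqref{phijplonkest} and \eqref{phijplonkderivativeest} directly from the convolution representation $\Phi_j f = \check{\phi}_j * f$, using the scaling structure of the $\phi_j$ inherited from the $\R_+$-action $\lambda.(t,z)=(\lambda^2 t,\lambda z)$. The key point is that $\check{\phi}_j$ is an $L^1$ function whose $L^1$-norm is controlled uniformly in $j$, while its horizontal derivatives pick up a factor $2^j$.

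\textbf{Step 1: uniform $L^1$-bound on $\check\phi_j$.} Since $\phi_0 = \varphi$ and $\phi_j = \varphi_j - \varphi_{j-1}$ with $\varphi_j(x)=\varphi(2^{-j}.x)$, the inverse Fourier transform scales as $\check{\varphi_j}(x) = 2^{j(d+2)}\,\check\varphi(2^j.x)$ (the Jacobian of $x\mapsto 2^{-j}.x$ is $2^{-j(d+2)}$, where $d+2$ is the homogeneous dimension), hence $\|\check{\varphi_j}\|_{L^1} = \|\check\varphi\|_{L^1}$ for every $j$. Therefore for $j\geq 1$, $\|\check\phi_j\|_{L^1} \leq \|\check{\varphi_j}\|_{L^1} + \|\check{\varphi_{j-1}}\|_{L^1} = 2\|\check\varphi\|_{L^1}$, and the same bound (with room to spare) holds for $j=0$. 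Young's inequality $\|\check\phi_j * f\|_{L^\infty} \leq \|\check\phi_j\|_{L^1}\|f\|_{L^\infty}$ then gives \eqref{phijplonkest}.

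\textbf{Step 2: the derivative estimate.} For a horizontal vector field $X$ (a constant-coefficient combination of the $X_k = \partial_{z_k} + \tfrac12\sum_l L_{kl}z_l\partial_t$, say $X=\sum a_k X_k$), one has $X\Phi_j f = (X\check\phi_j)*f$ if one moves the derivative onto the kernel — more carefully, since $X$ has coefficients linear in $z$, one writes $X = \sum_k a_k\partial_{z_k}$ up to the term $\tfrac12\sum_{k,l}a_k L_{kl} z_l\,\partial_t$; on the image of $\Phi_j$, which consists of band-limited functions, both kinds of terms are controlled. The cleanest route is to use homogeneity: $X$ is homogeneous of degree $1$ for the $\R_+$-action, so $X\check{\varphi_j}(x) = 2^{j(d+2)}\,2^{j}(X\check\varphi)(2^j.x)$, whence $\|X\check{\varphi_j}\|_{L^1} = 2^j\|X\check\varphi\|_{L^1}$. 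Consequently $\|X\check\phi_j\|_{L^1}\leq 2^j\|X\check\varphi\|_{L^1} + 2^{j-1}\|X\check\varphi\|_{L^1} \leq 2^j\cdot\tfrac{3}{2}\|X\check\varphi\|_{L^1}$, and Young's inequality again yields $\|X\Phi_j f\|_{L^\infty}\leq 2^j C_\varphi\|f\|_{L^\infty}$ with $C_\varphi$ depending only on $\varphi$ and on the coefficients $a_k$ of $X$, i.e. on $\varphi$ and $X$. This proves \eqref{phijplonkderivativeest}.

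\textbf{Main obstacle.} The one genuine subtlety is handling the vertical-derivative contribution hidden in $X_k = \partial_{z_k}+\tfrac12\sum_l L_{kl}z_l\partial_t$: the factor $z_l$ is unbounded, so one cannot naively bound $X\check\phi_j$ in $L^1$ by differentiating under the integral. This is exactly what the homogeneity argument circumvents — since $X_k$ is homogeneous of degree $1$ and $\check\varphi\in\mathcal{S}(\R^{d+1})$ (so $X\check\varphi\in L^1$, the polynomial weight being absorbed by Schwartz decay), the scaling computation $X\check{\varphi_j}(x)=2^{j(d+2)+j}(X\check\varphi)(2^j.x)$ is valid pointwise and the $L^1$-norm transforms by the stated power of $2$ with no boundary or growth issues. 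I would make sure to state explicitly that $\check\varphi$ and $X\check\varphi$ lie in $L^1(\R^{d+1})$ (a consequence of $\varphi\in C_c^\infty$, so $\check\varphi\in\mathcal S$) before invoking Young's inequality, which is the only place integrability is used.
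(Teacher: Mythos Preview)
Your proof is correct and follows essentially the same approach as the paper: both compute $\|\check{\phi}_j\|_{L^1}$ and $\|X\check{\phi}_j\|_{L^1}$ via the homogeneous scaling of $\check{\varphi}_j$ and the degree-$1$ homogeneity of $X$, then conclude by Young's inequality. The paper, like you, passes from the $L^1$ bound on $X\check{\phi}_j$ directly to the $L^\infty$ estimate without further comment on the variable-coefficient subtlety you flag in your ``main obstacle'' paragraph.
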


\begin{proof}
A direct computation gives us the identity
\[(\phi_j)\,\check{}\,(x)=2^{j(d+2)}\check{\varphi}(2^j.x)-2^{(j-1)(d+2)}\check{\varphi}(2^{j-1}.x).\]
The estimate $\|(\phi_j)\,\check{}\,\|_{L^1}\leq 2\|\check{\varphi}\|_{L^1}$ follows, proving \eqref{phijplonkest}.

The homogeneity of $X$ guarantees that
$$X(\phi_j)\,\check{}\,(x)=2^{j(d+3)}T_{2^j}X\check{\varphi}-2^{(j-1)(d+3)}T_{2^{j-1}}X\check{\varphi}.$$
We arrive at the estimate $\|X(\phi_j)\,\check{}\,\|_{L^1}\leq C_\varphi 2^j$, where the constant is proportional to $\|X\check{\varphi}\|_{L^1}$. This proves \eqref{phijplonkderivativeest}.
\end{proof}

\begin{deef}[Besov spaces]
For $s\geq0$ and $p,q\in [1,\infty]$ we define
\[\|f\|_{HB^s_{p,q}}:=\left\|\left(2^{sj}\|\Phi_jf(\cdot)\|_{L^p(\R^{d+1})}\right)_{j\in \N}\right\|_{\ell^q(\N)}.\]
The Besov space corresponding to these parameters is defined by
\[HB^s_{p,q}(\R^{d+1}):=\{f:\|f\|_{HB^s_{p,q}}<\infty\}.\]
If $p=q=\infty$, we write
\[C^{s,*}_H(\R^{d+1}):=HB^s_{\infty,\infty}(\R^{d+1}).\]
\end{deef}

The linear spaces $HB^s_{p,q}(\R^{d+1})$ are Banach spaces in the norm $\|\cdot\|_{HB^s_{p,q}}$. We will only use $HB^s_{\infty,p}$ in this paper. Whenever $s\in (0,1)$, we let $C^s_{CC}(\R^{d+1})$ denote the Banach algebra of bounded functions H\"older continuous in the Carnot-Carath\'eodory metric on $\R^{d+1}$. Similarly, we use the notation $C^0_{CC}(\R^{d+1})=BC(\R^{d+1})$ for the Banach algebra of bounded continuous functions and $C^1_{CC}(\R^{d+1})=\textnormal{Lip}_{CC}(\R^{d+1})$ for the Banach algebra of bounded Lipschitz functions.

\begin{lem}
\label{csstatcs}
For $s\in (0,1)$, $C^{s,*}_H(\R^{d+1})=C^{s}_{CC}(\R^{d+1})$ as Banach spaces.
\end{lem}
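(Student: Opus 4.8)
The goal is the identification $C^{s,*}_H(\R^{d+1}) = C^s_{CC}(\R^{d+1})$ for $s \in (0,1)$, with equivalence of norms. By Lemma \ref{equivalencesofmetrics} and the Proposition comparing $|\cdot|_{C^s_{CC}}$ with its Koranyi version $|\cdot|_{C^s_{CC},K}$, it suffices to work throughout with the Koranyi gauge $|\cdot|_H$: a bounded $f$ lies in $C^s_{CC}(\R^{d+1})$ iff $\|f\|_\infty + \sup_{x \neq y}|f(x)-f(y)|\,|xy^{-1}|_H^{-s} < \infty$. So the plan is to prove the two-sided estimate
\[
\|f\|_\infty + \sup_{j}2^{sj}\|\Phi_j f\|_{L^\infty} \;\sim\; \|f\|_\infty + \sup_{x\neq y}\frac{|f(x)-f(y)|}{|xy^{-1}|_H^{s}},
\]
which is the classical Besov--H\"older identity transported to the anisotropic Heisenberg dilation structure. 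The key analytic inputs are already recorded as \eqref{phijplonkest} and \eqref{phijplonkderivativeest}, together with $\sum_j \phi_j = 1$ and the near-orthogonality relation $\Phi_j = (\Phi_{j-1}+\Phi_j+\Phi_{j+1})\Phi_j$ of Proposition \ref{phijalgebra}.

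\textbf{Step 1: H\"older continuity implies Besov control.} Suppose $f \in C^s_{CC}(\R^{d+1})$. For $j \geq 1$, the Fourier multiplier $\phi_j = \varphi_j - \varphi_{j-1}$ has the property that $(\phi_j)\check{\ } $ integrates to $0$ (since $\phi_j(0) = 0$), so
\[
\Phi_j f(x) = \int (\phi_j)\check{\ }(y)\,\bigl(f(x y^{-1}) - f(x)\bigr)\,\rd y .
\]
Here $(\phi_j)\check{\ }(y) = 2^{j(d+2)}\check{\varphi}(2^j . y) - 2^{(j-1)(d+2)}\check{\varphi}(2^{j-1}.y)$, so each term is an $L^1$-normalized bump concentrated on $|y|_H \lesssim 2^{-j}$. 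Using $|f(xy^{-1}) - f(x)| \leq |f|_{C^s_{CC},K}\,|y|_H^s$ and the change of variables $y \mapsto 2^{-j}.y$, one gets $|\Phi_j f(x)| \lesssim |f|_{C^s_{CC},K}\,2^{-js}\int |\check{\varphi}(y)|\,|y|_H^s\,\rd y$, where the last integral converges because $\check\varphi$ is Schwartz and $|y|_H$ grows at most polynomially. Hence $2^{js}\|\Phi_j f\|_\infty \lesssim |f|_{C^s_{CC}}$; the case $j = 0$ is covered by \eqref{phijplonkest}. Taking the supremum gives the $\lesssim$ direction.

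\textbf{Step 2: Besov control implies H\"older continuity.} Conversely, suppose $\|f\|_{C^{s,*}_H} < \infty$. Write $f = \sum_{j=0}^\infty \Phi_j f$ (convergence in $L^\infty$ since $2^{js}\|\Phi_j f\|_\infty$ is bounded and $s > 0$), and estimate $|f(x) - f(y)|$ by splitting the sum at an index $N$ chosen with $2^{-N} \sim |xy^{-1}|_H$. For the low-frequency part $j \leq N$, use the mean-value bound along a horizontal path together with \eqref{phijplonkderivativeest}: $|\Phi_j f(x) - \Phi_j f(y)| \lesssim |xy^{-1}|_{H}\cdot \|X\Phi_j f\|_\infty \lesssim |xy^{-1}|_H\,2^j 2^{-js}\|f\|_{C^{s,*}_H}$, and summing the geometric series in $j \leq N$ (legitimate since $1 - s > 0$) yields $\lesssim |xy^{-1}|_H\,2^{N(1-s)}\|f\|_{C^{s,*}_H} \sim |xy^{-1}|_H^s\|f\|_{C^{s,*}_H}$. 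For the high-frequency part $j > N$, simply bound $|\Phi_j f(x) - \Phi_j f(y)| \leq 2\|\Phi_j f\|_\infty \lesssim 2^{-js}\|f\|_{C^{s,*}_H}$ and sum the geometric series in $j > N$ (legitimate since $s > 0$) to get $\lesssim 2^{-Ns}\|f\|_{C^{s,*}_H} \sim |xy^{-1}|_H^s\|f\|_{C^{s,*}_H}$. Adding the two parts gives $|f(x)-f(y)| \lesssim |xy^{-1}|_H^s\,\|f\|_{C^{s,*}_H}$, and $\|f\|_\infty \leq \|f\|_{C^{s,*}_H}$ trivially, completing the $\gtrsim$ direction.

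\textbf{Main obstacle.} The routine-but-delicate point is Step 2: one needs the mean-value estimate $|g(x) - g(y)| \lesssim |xy^{-1}|_H\,\sup_X \|Xg\|_\infty$ for smooth $g$ along horizontal curves — this is exactly where the bracket-generating structure and the equivalence of $\rd_{CC}$ with the Koranyi gauge (Lemma \ref{equivalencesofmetrics} and Proposition \ref{dccvskorga}) enter, since a priori $\|Xg\|_\infty$ only controls variation along horizontal directions, and one must invoke Chow connectivity with length comparable to $\rd_{CC}$. Once that is in hand, the two geometric series converge precisely because $0 < s < 1$ (the low-frequency sum needs $s < 1$, the high-frequency sum needs $s > 0$), which is why the statement is restricted to this range. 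All remaining manipulations — the cancellation $\int(\phi_j)\check{\ } = 0$, the anisotropic change of variables, convergence of $\int|\check\varphi||y|_H^s$ — are standard.
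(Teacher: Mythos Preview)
Your proof is correct and follows essentially the same approach as the paper: the cancellation $\int(\phi_j)\check{\ }=0$ for the H\"older-to-Besov direction, and the high/low frequency split with the Bernstein-type bound \eqref{phijplonkderivativeest} for the converse. One small slip: since $\Phi_j$ is defined as a \emph{Euclidean} Fourier multiplier, the convolution formula reads $\Phi_j f(x)=\int(\phi_j)\check{\ }(y)\,(f(x-y)-f(x))\,\rd y$ with ordinary subtraction, not $f(xy^{-1})$; accordingly one should use the anisotropic seminorm $|\cdot|_{C^s_{CC},an}$ (equivalent by Lemma \ref{equivalencesofmetrics}) rather than the Koranyi one, exactly as the paper does.
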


The proof is along the lines of the proof of \cite[Theorem 6.1]{abels}. We recall it for the convenience of the reader.

\begin{proof}
We prove the inclusion $C^s_{CC}(\R^{d+1})\subseteq C^{s,*}_H(\R^{d+1})$ for $s\in [0,1]$. Assume that $f\in C^s_{CC}(\R^{d+1})$. We have
\[|f(x+y)-f(x)|\leq \|f\|_{C^s_{CC}(\R^{d+1})} |y|_H^s.\]
For $j\geq 1$, we compute
\[\int_{\R^{d+1}}(\phi_j)\,\check{}\,(y)\rd y=\phi_j(0)=0.\]
In particular, we can rewrite $\Phi_j f$ as
\[\Phi_jf(x)=\int_{\R^{d+1}}(\phi_j)\,\check{}\,(y)f(x-y)\rd y=\int_{\R^{d+1}}(\phi_j)\,\check{}\,(y)(f(x-y)-f(x))\rd y.\]
As a consequence, if $s\in [0,1]$
\begin{align}
\sup_{j\in \N}2^{js}\|\Phi_jf\|_{L^\infty}&\leq\sup_{j\in \N}2^{js}\|f\|_{C^s_{CC}}\int_{\R^{d+1}} |y|_H^s|(\phi_j)\,\check{}\,(y)| \rd y\leq\\
\nonumber
&\leq 2\sup_{j\in \N}\|f\|_{C^s_{CC}}\int_{\R^{d+1}} |y|_H^s|\check{\phi}(y)| \rd y\leq C\|f\|_{C^s_{CC}}.
\end{align}

Conversely, assume $f\in C^{s,*}_H(\R^{d+1})$ where $s\in (0,1)$. For $|y|\leq 1$,
\[f(x-y)-f(x)=\sum_{2^j>|y|_H^{-1}}\left[\Phi_jf(x-y)-\Phi_jf(x)\right]+\sum_{2^j\leq |y|_H^{-1}}\left[\Phi_jf(x-y)-\Phi_jf(x)\right].\]
The first term is estimated as follows;
\[\sum_{2^j>|y|_H^{-1}}|\Phi_jf(x-y)-\Phi_jf(x)|\leq 2\sum_{2^j>|y|_H^{-1}}\|\Phi_jf\|_{L^\infty}\leq 2\|f\|_{C^{s,*}_H}\sum_{2^j>|y|_H^{-1}} 2^{-js}\leq C|y|_H^s\|f\|_{C^{s,*}_H}.\]

It is a direct consequence of Proposition \ref{phijalgebra} and  \eqref{phijplonkderivativeest} that
\begin{equation}
\label{phijlipext}
\|\Phi_j f\|_{\textnormal{Lip}_{CC}}= \|(\Phi_{j-1}+\Phi_j+\Phi_{j+1})\Phi_j f\|_{\textnormal{Lip}_{CC}}\leq C 2^j\|\Phi_j f\|_{L^\infty}.
\end{equation}
From these estimates and \eqref{phijplonkderivativeest}, we have
\begin{align*}
\sum_{2^j\leq |y|_H^{-1}}|\Phi_jf(x-y)-\Phi_jf(x)|&\leq \sum_{2^j\leq |y|_H^{-1}}|y|_H\|\Phi_jf\|_{\textnormal{Lip}_{CC}}\leq\\
&\leq  |y|_H\|f\|_{C^{s,*}_H}\sum_{2^j\leq |y|_H^{-1}}2^{j(1-s)}\leq C|y|_H^s\|f\|_{C^{s,*}_H},
\end{align*}
where the last estimate follows from the estimate $\sum_{2^j\leq |y|_H^{-1}}2^{j(1-s)}\leq C|y|_H^{s-1}$.
\end{proof}

\begin{prop}
\label{sandwichinglip}
There are continuous inclusions
$$HB^1_{\infty,1}(\R^{d+1})\hookrightarrow \textnormal{Lip}_{CC}(\R^{d+1})\hookrightarrow C^{1,*}_H(\R^{d+1}).$$
\end{prop}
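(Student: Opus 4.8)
The statement chains two inclusions; I would prove each separately. The second inclusion, $\textnormal{Lip}_{CC}(\R^{d+1})\hookrightarrow C^{1,*}_H(\R^{d+1})$, is essentially contained in the proof of Lemma \ref{csstatcs}: the argument there establishing $C^s_{CC}\subseteq C^{s,*}_H$ was written for $s\in[0,1]$, and the case $s=1$ gives exactly $\sup_j 2^j\|\Phi_j f\|_{L^\infty}\leq C\|f\|_{\textnormal{Lip}_{CC}}$ using the vanishing moment $\int(\phi_j)\check{\ }=\phi_j(0)=0$ for $j\geq1$ together with the rescaling bound $\|(\phi_j)\check{\ }\|_{L^1}\leq 2\|\check\varphi\|_{L^1}$ and $|f(x-y)-f(x)|\leq \|f\|_{\textnormal{Lip}_{CC}}|y|_H$. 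So this half just amounts to pointing to that computation.

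\textbf{First inclusion.} For $HB^1_{\infty,1}(\R^{d+1})\hookrightarrow \textnormal{Lip}_{CC}(\R^{d+1})$, I would start from $f=\sum_{j\geq0}\Phi_j f$ (convergence in $L^\infty$, which already follows from $\|f\|_{HB^1_{\infty,1}}<\infty$ since $2^j\|\Phi_j f\|_{L^\infty}$ is summable, a fortiori $\|\Phi_j f\|_{L^\infty}$ is summable). To estimate the Lipschitz seminorm of $f$ it suffices, by the characterization of $\textnormal{Lip}_{CC}$ via horizontal derivatives (the Lemma following Definition \ref{cantotdef}'s function-space subsection, i.e. $|f|_{\textnormal{Lip}_{CC}}\sim \sup_X\|Xf\|_{L^\infty}$ over unit horizontal vector fields $X$), to bound $\|X\sum_j\Phi_j f\|_{L^\infty}$. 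Apply \eqref{phijplonkderivativeest}: $\|X\Phi_j f\|_{L^\infty}\leq 2^j C_\varphi\|f\|_{L^\infty}$, but that $\|f\|_{L^\infty}$ should be replaced by $\|\Phi_j f\|_{L^\infty}$ via the reproducing identity of Proposition \ref{phijalgebra}, $\Phi_j=(\Phi_{j-1}+\Phi_j+\Phi_{j+1})\Phi_j$, exactly as in \eqref{phijlipext}: $\|X\Phi_j f\|_{L^\infty}=\|X(\Phi_{j-1}+\Phi_j+\Phi_{j+1})\Phi_j f\|_{L^\infty}\leq C2^j\|\Phi_j f\|_{L^\infty}$. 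Summing over $j$ gives $\|Xf\|_{L^\infty}\leq C\sum_j 2^j\|\Phi_j f\|_{L^\infty}=C\|f\|_{HB^1_{\infty,1}}$. A parallel (in fact easier) estimate using \eqref{phijplonkest} controls $\|f\|_{L^\infty}\leq 2\|\check\varphi\|_{L^1}\sum_j\|\Phi_j f\|_{L^\infty}\leq C\|f\|_{HB^1_{\infty,1}}$, so the full norm is controlled.

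\textbf{Main obstacle.} There is no deep difficulty here — all the ingredients (the $\Phi_j$ algebra identity, the $L^\infty\to L^\infty$ and derivative bounds, the horizontal-derivative characterization of $\textnormal{Lip}_{CC}$) are already in hand. The one point requiring a little care is justifying that $\sum_j\Phi_j f$ converges to $f$ and that one may differentiate term by term: the termwise $L^\infty$ bound on $X\Phi_j f$ is summable, so $\sum_j X\Phi_j f$ converges uniformly and equals $Xf$ in the distributional sense, whence $f$ is genuinely CC-Lipschitz rather than merely having a finite seminorm on a dense subspace. I would also remark that the strict inclusion (that these embeddings are not equalities) is not claimed, so no counterexamples are needed — the proposition asserts only continuity of the two maps, which the norm estimates above deliver directly.
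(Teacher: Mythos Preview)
Your proposal is correct and follows essentially the same route as the paper: both invoke the $s=1$ case of the computation in Lemma~\ref{csstatcs} for the second inclusion, and both prove the first inclusion by decomposing $f=\sum_j\Phi_jf$ and summing the estimate \eqref{phijlipext}, $\|\Phi_jf\|_{\textnormal{Lip}_{CC}}\leq C2^j\|\Phi_jf\|_{L^\infty}$. The only cosmetic difference is that the paper bounds $|f(x-y)-f(x)|$ directly by $|y|_H\sum_j\|\Phi_jf\|_{\textnormal{Lip}_{CC}}$ via the triangle inequality, whereas you pass through $\|Xf\|_{L^\infty}$ and the horizontal-derivative characterization of $\textnormal{Lip}_{CC}$; the paper's variant is marginally more self-contained since that characterization was stated only for closed manifolds, but on the Heisenberg group it is standard and your argument is fine.
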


\begin{proof}
The proof of the fact that the inclusion $\textnormal{Lip}_{CC}(\R^{d+1})\hookrightarrow C^{1,*}_H(\R^{d+1})$ is continuous is carried out in the proof of Lemma \ref{csstatcs}. To prove continuity of the inclusion $HB^1_{\infty,1}(\R^{d+1})\hookrightarrow \textnormal{Lip}_{CC}(\R^{d+1})$, take an $f\in HB^1_{\infty,1}(\R^{d+1})$. The triangle inequality implies the estimate
\[|f(x-y)-f(x)|\leq \sum_{j=0}^\infty|\Phi_jf(x-y)-\Phi_jf(x)|\leq |y|_H\sum_{j=0}^\infty \|\Phi_j f\|_{\textnormal{Lip}_{CC}}.\]
By \eqref{phijlipext},
\[\sum_{j=0}^\infty \|\Phi_j f\|_{\textnormal{Lip}_{CC}}\leq C\sum_{j=0}^\infty 2^j\|\Phi_j f\|_{L^\infty}=C\|f\|_{HB^1_{\infty,1}}.\]

\end{proof}

\begin{prop}
\label{interpolatingbesov}
For any $s\neq t>0$, $\theta\in (0,1)$ and $p\in [1,\infty]$,
\[[HB^s_{\infty,1}(\R^{d+1}),HB^t_{\infty,\infty}(\R^{d+1})]_{\theta,p}=HB^{s_\theta}_{\infty,p}(\R^{d+1})\]
and \[[HB^s_{\infty,1}(\R^{d+1}),HB^s_{\infty,\infty}(\R^{d+1})]_{\theta,p}=HB^{s}_{\infty,p_\theta}(\R^{d+1}).\]
Here $s_\theta=(1-\theta)s+\theta t$, $\frac{1}{p_\theta}=1-\theta$. In particular,
\[[C^{s,*}(\R^{d+1}),C^{t,*}(\R^{d+1})]_{\theta,\infty}=C^{s_\theta,*}(\R^{d+1}),\]
\end{prop}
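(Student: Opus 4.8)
The plan is to prove the two interpolation identities for the anisotropic Besov spaces by the standard retraction/coretraction argument, which reduces everything to known interpolation identities for vector-valued sequence spaces $\ell^q(\N, L^p)$. First I would set up the map $J : HB^s_{\infty,q}(\R^{d+1}) \to \ell^q_s(\N, L^\infty(\R^{d+1}))$, $Jf := (\Phi_j f)_{j\in\N}$, where $\ell^q_s$ denotes the weighted sequence space with weight $(2^{sj})_j$; by definition of the Besov norm, $J$ is an isometry onto a closed subspace. The crucial point is the existence of a bounded left inverse (a retraction) $P : \ell^q_s(\N,L^\infty) \to HB^s_{\infty,q}(\R^{d+1})$, which I would define by $P((g_j)_j) := \sum_j (\Phi_{j-1}+\Phi_j+\Phi_{j+1})g_j$ (with the convention $\Phi_{-1}=0$). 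That $PJ = \mathrm{id}$ is exactly Proposition \ref{phijalgebra}; that $P$ is bounded on the relevant scale of sequence spaces follows from the localization of the Fourier supports of $\phi_{j-1}+\phi_j+\phi_{j+1}$ together with estimates \eqref{phijplonkest} (each summand $(\Phi_{j-1}+\Phi_j+\Phi_{j+1})g_j$ has $L^\infty$-norm $\lesssim \|g_j\|_{L^\infty}$, and the overlap in frequency is bounded, so the $\Phi_k$-block of $P((g_j)_j)$ only sees $g_{k-1},g_k,g_{k+1}$). This is the step I expect to require the most care, since one must check that $P$ is simultaneously bounded $\ell^1_s(L^\infty)\to HB^s_{\infty,1}$ and $\ell^\infty_t(L^\infty)\to HB^t_{\infty,\infty}$, i.e. on both endpoint spaces of the interpolation couple.

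Once $(J,P)$ is a coretraction/retraction pair for every pair of indices appearing in the statement, the abstract interpolation principle (a retract of an interpolation space is an interpolation space, see e.g. Bergh--L\"ofstr\"om, Theorem 6.4.2) gives
\[
[HB^s_{\infty,1}, HB^t_{\infty,\infty}]_{\theta,p} = P\Big( [\ell^1_s(\N,L^\infty), \ell^\infty_t(\N,L^\infty)]_{\theta,p}\Big),
\]
so it remains to compute the real interpolation space of the weighted vector-valued sequence spaces. For this I would invoke the classical formula
\[
[\ell^{q_0}_{s_0}(\N,A), \ell^{q_1}_{s_1}(\N,A)]_{\theta,p} = \ell^p_{s_\theta}(\N,A)
\]
valid for any Banach space $A$ when $s_0\neq s_1$ (and the analogous statement $[\ell^{q_0}_s(\N,A),\ell^{q_1}_s(\N,A)]_{\theta,p}=\ell^{p_\theta}_s(\N,A)$ with $\tfrac1{p_\theta}=\tfrac{1-\theta}{q_0}+\tfrac{\theta}{q_1}$ when the weights coincide), with $A=L^\infty(\R^{d+1})$, $s_0=s$, $s_1=t$, $q_0=1$, $q_1=\infty$, so that $s_\theta = (1-\theta)s+\theta t$ and $\tfrac1{p_\theta}=1-\theta$ as stated. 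Applying $P$ and using that $J$ identifies $HB^{s_\theta}_{\infty,p}$ isometrically with its image then yields $[HB^s_{\infty,1},HB^t_{\infty,\infty}]_{\theta,p}=HB^{s_\theta}_{\infty,p}$, and likewise for the equal-weight case.

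Finally, the last displayed identity $[C^{s,*}(\R^{d+1}),C^{t,*}(\R^{d+1})]_{\theta,\infty}=C^{s_\theta,*}(\R^{d+1})$ is the special case $q_0=q_1=\infty$... but note $HB^s_{\infty,1}$ rather than $HB^s_{\infty,\infty}$ appears as the first endpoint in the first identity; so to get the $C^{\cdot,*}$ statement I would instead use the identity with both endpoints $HB^s_{\infty,\infty}=C^{s,*}$ and $HB^t_{\infty,\infty}=C^{t,*}$, taking $q_0=q_1=\infty$ and $p=\infty$ in the weighted-sequence-space formula (which still gives $\ell^\infty_{s_\theta}$ since $s\neq t$), so $[C^{s,*},C^{t,*}]_{\theta,\infty}=HB^{s_\theta}_{\infty,\infty}=C^{s_\theta,*}$ by Lemma \ref{csstatcs}. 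The only genuinely non-formal input, beyond the standard sequence-space interpolation, is the boundedness of the retraction $P$ on the endpoint spaces, and that is where I would spend the bulk of the write-up; everything else is bookkeeping with the abstract retraction lemma and the definitions.
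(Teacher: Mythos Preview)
Your approach is correct and is precisely the standard retraction/coretraction argument from \cite[Chapter 6]{berghlofstrom} that the paper invokes (the paper gives no further details beyond that citation). One small remark: in your final line the identification $C^{s_\theta,*}=HB^{s_\theta}_{\infty,\infty}$ holds by definition, not by Lemma \ref{csstatcs}; and you are right that the ``In particular'' clause does not literally follow from the two displayed identities as stated, so your fix of rerunning the retraction argument with $q_0=q_1=\infty$ (using the $s_0\neq s_1$ sequence-space formula) is the clean way to obtain it.
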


The proof follows from the techniques in \cite[Chapter 6]{berghlofstrom}.

\begin{prop}
\label{sandwichingc}
There are continuous inclusions
 $$HB^0_{\infty,1}(\R^{d+1})\hookrightarrow BC(\R^{d+1})\hookrightarrow C^{0,*}_H(\R^{d+1}).$$
\end{prop}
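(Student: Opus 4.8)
The plan is to mimic the structure of the proof of Proposition~\ref{sandwichinglip}, replacing the Lipschitz estimate by the cruder $L^\infty$ bound \eqref{phijplonkest}. First I would treat the inclusion $BC(\R^{d+1})\hookrightarrow C^{0,*}_H(\R^{d+1})$; this is already covered by the argument in the proof of Lemma~\ref{csstatcs}, where it was shown for $s\in[0,1]$, hence in particular for $s=0$, that
\[
\sup_{j\in\N}\|\Phi_j f\|_{L^\infty}\leq 2\|\check\varphi\|_{L^1}\,\|f\|_{L^\infty},
\]
which is exactly continuity of $BC(\R^{d+1})\hookrightarrow HB^0_{\infty,\infty}(\R^{d+1})=C^{0,*}_H(\R^{d+1})$; one only needs to note that the uniform limit $\sum_j\Phi_j f=f$ together with continuity of each $\Phi_j f$ recovers that $f$ itself is bounded continuous, so the spaces are compatible.

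For the inclusion $HB^0_{\infty,1}(\R^{d+1})\hookrightarrow BC(\R^{d+1})$, the key point is that $\sum_{j=0}^\infty\Phi_j f$ converges uniformly: by \eqref{phijplonkest} each $\Phi_j f\in L^\infty$ (in fact each $\Phi_j f$ is smooth and bounded), and
\[
\sum_{j=0}^\infty\|\Phi_j f\|_{L^\infty}=\|f\|_{HB^0_{\infty,1}}<\infty,
\]
so the partial sums $\sum_{j=0}^N\Phi_j f$ form a Cauchy sequence in the sup norm. Since $\sum_{j=0}^\infty\phi_j=1$ pointwise, the limit is $f$; being a uniform limit of bounded continuous functions (each $\check\phi_j$ is Schwartz, so $\Phi_j f=\check\phi_j * f$ is bounded and continuous for $f\in L^\infty$), $f$ is itself bounded and continuous, and $\|f\|_{L^\infty}\leq\|f\|_{HB^0_{\infty,1}}$. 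This gives the continuous inclusion.

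I do not expect any genuine obstacle here: both inclusions are soft consequences of \eqref{phijplonkest} and the pointwise partition-of-unity identity $\sum_j\phi_j=1$, exactly paralleling Proposition~\ref{sandwichinglip} with the trivial horizontal-derivative bound removed. The only point requiring a little care is the compatibility of the Banach spaces — i.e. that an element of $HB^0_{\infty,1}$, a priori a tempered distribution, is genuinely represented by a bounded continuous function — but this is handled precisely by the uniform convergence of $\sum_j\Phi_j f$ just described.
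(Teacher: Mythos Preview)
Your proposal is correct and follows essentially the same approach as the paper: the second inclusion is referred back to the proof of Lemma~\ref{csstatcs}, and the first is obtained by noting that $\sum_j\Phi_j f$ converges uniformly (since $\sum_j\|\Phi_j f\|_{L^\infty}=\|f\|_{HB^0_{\infty,1}}<\infty$) to $f$, placing $f$ in the uniform closure of bounded smooth functions. If anything, your remark about compatibility of the distributional and function-space pictures is slightly more careful than the paper's one-line version.
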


\begin{proof}
The second inclusion was shown to be continuous in the proof of Lemma \ref{csstatcs}. To prove the first inclusion, we take an $f\in HB^0_{\infty,1}(\R^{d+1})$ and note
\[\left\|f-\sum_{j=0}^N \Phi_j f\right\|_{L^\infty}\leq \sum_{j=N+1}^\infty \|\Phi_jf\|_{L^\infty}\to 0\quad\mbox{as}\quad N\to \infty.\]
Therefore, $f$ is in the $L^\infty$-closure of $BC^\infty(\R^{d+1})$, i.e.~in $BC(\R^{d+1})$.

\end{proof}

\begin{cor}
\label{interpolatingwithlipschitzrd}
For any $\theta\in (0,1)$,
\[[BC(\R^{d+1}),\textnormal{Lip}_{CC}(\R^{d+1})]_{\theta,\infty}=C^\theta_{CC}(\R^{d+1}).\]
\end{cor}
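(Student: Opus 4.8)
The plan is to deduce this corollary by sandwiching the spaces $BC(\R^{d+1})$ and $\textnormal{Lip}_{CC}(\R^{d+1})$ between Besov spaces for which the interpolation identity of Proposition \ref{interpolatingbesov} is already available, and then use the standard fact that the real interpolation functor is stable under such sandwiching (a retract/reiteration-type argument). Concretely, Proposition \ref{sandwichingc} gives continuous inclusions $HB^0_{\infty,1}(\R^{d+1})\hookrightarrow BC(\R^{d+1})\hookrightarrow C^{0,*}_H(\R^{d+1})=HB^0_{\infty,\infty}(\R^{d+1})$, and Proposition \ref{sandwichinglip} together with Lemma \ref{csstatcs} gives $HB^1_{\infty,1}(\R^{d+1})\hookrightarrow \textnormal{Lip}_{CC}(\R^{d+1})\hookrightarrow C^{1,*}_H(\R^{d+1})=HB^1_{\infty,\infty}(\R^{d+1})$. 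So both endpoint spaces lie between an $HB^s_{\infty,1}$ and an $HB^s_{\infty,\infty}$ with the same smoothness index.

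First I would record the elementary monotonicity/interpolation principle: if $X_0\hookrightarrow Y_0\hookrightarrow Z_0$ and $X_1\hookrightarrow Y_1\hookrightarrow Z_1$ are continuous inclusions, and if $[X_0,X_1]_{\theta,q}$ and $[Z_0,Z_1]_{\theta,q}$ coincide as a set with equivalent norms (call this common space $W$), then $[Y_0,Y_1]_{\theta,q}=W$ as well, since $[X_0,X_1]_{\theta,q}\hookrightarrow [Y_0,Y_1]_{\theta,q}\hookrightarrow [Z_0,Z_1]_{\theta,q}$ by the interpolation property of continuous inclusions (the $K$-functional is monotone). Then I would apply this with $X_0=HB^0_{\infty,1}$, $Y_0=BC$, $Z_0=HB^0_{\infty,\infty}$ and $X_1=HB^1_{\infty,1}$, $Y_1=\textnormal{Lip}_{CC}$, $Z_1=HB^1_{\infty,\infty}$. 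By the first identity in Proposition \ref{interpolatingbesov} (with $s=0$, $t=1$, $p=\infty$), $[HB^0_{\infty,1},HB^1_{\infty,\infty}]_{\theta,\infty}=HB^\theta_{\infty,\infty}=C^{\theta,*}_H(\R^{d+1})$, and likewise $[HB^0_{\infty,\infty},HB^1_{\infty,\infty}]_{\theta,\infty}$ — here I would either invoke the same Besov interpolation formula (noting that $HB^0_{\infty,\infty}=C^{0,*}_H$ and the computation in \cite[Chapter 6]{berghlofstrom} also covers the case with the softer left endpoint space) or, more cleanly, use reiteration: $C^{0,*}_H\hookrightarrow HB^{-\varepsilon}_{\infty,1}$ would not be needed; instead one observes $[HB^0_{\infty,\infty},HB^1_{\infty,\infty}]_{\theta,\infty}=C^{\theta,*}_H$ follows from Proposition \ref{interpolatingbesov} applied with $s=0,t=1$ after replacing the left endpoint $HB^0_{\infty,1}$ by $HB^0_{\infty,\infty}$, which only changes a fine index that is irrelevant when $s\neq t$ and the output index is $p=\infty$. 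So both the outer and inner interpolation spaces equal $C^{\theta,*}_H(\R^{d+1})$, and finally Lemma \ref{csstatcs} identifies $C^{\theta,*}_H(\R^{d+1})=C^\theta_{CC}(\R^{d+1})$ for $\theta\in(0,1)$, yielding the claim.

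The step I expect to require the most care is justifying that the left-endpoint fine index can be relaxed from $1$ to $\infty$ without changing the real interpolation space when $s\neq t$ — i.e. that $[HB^s_{\infty,1},HB^t_{\infty,\infty}]_{\theta,\infty}$ and $[HB^s_{\infty,\infty},HB^t_{\infty,\infty}]_{\theta,\infty}$ coincide. This is a standard consequence of the density/embedding $HB^s_{\infty,1}\hookrightarrow HB^s_{\infty,\infty}$ combined with the power embedding $HB^t_{\infty,\infty}\hookrightarrow HB^{s}_{\infty,1}$ for $t>s$ (a few steps in the Besov scale lose a full derivative's worth of summability), so that the two couples are "close" in the sense needed for the interpolation spaces to agree; alternatively one invokes the fact that for a quasi-linearizable couple the secondary index on the smoother-or-rougher endpoint is washed out. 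I would phrase this as: since $HB^{t}_{\infty,\infty}\hookrightarrow HB^{s}_{\infty,1}\hookrightarrow HB^{s}_{\infty,\infty}$ for $t>s$, the couples $(HB^s_{\infty,1},HB^t_{\infty,\infty})$ and $(HB^s_{\infty,\infty},HB^t_{\infty,\infty})$ have the same real interpolation spaces in the range $0<\theta<1$, by a standard reiteration argument as in \cite[Chapter 6]{berghlofstrom}. Everything else is a formal consequence of the monotonicity of the $K$-functional under continuous inclusions, and no genuinely new estimate is needed beyond Propositions \ref{sandwichinglip}, \ref{interpolatingbesov}, \ref{sandwichingc} and Lemma \ref{csstatcs}.
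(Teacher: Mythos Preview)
Your proposal is correct and follows essentially the same route as the paper: sandwich $BC$ and $\textnormal{Lip}_{CC}$ between $HB^s_{\infty,1}$ and $HB^s_{\infty,\infty}$ for $s=0,1$ via Propositions \ref{sandwichingc} and \ref{sandwichinglip}, use monotonicity of the real interpolation functor, and then identify both bounding interpolation spaces with $C^{\theta,*}_H=C^\theta_{CC}$ via Proposition \ref{interpolatingbesov} and Lemma \ref{csstatcs}. The paper's proof is terser --- it simply records the sandwiches and concludes $[BC,\textnormal{Lip}_{CC}]_{\theta,\infty}=[HB^0_{\infty,\infty},HB^1_{\infty,\infty}]_{\theta,\infty}$ without spelling out the monotonicity step --- while you make the sandwich/reiteration argument explicit; there is no substantive difference in strategy. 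One small slip: your inner couple is $(X_0,X_1)=(HB^0_{\infty,1},HB^1_{\infty,1})$, not $(HB^0_{\infty,1},HB^1_{\infty,\infty})$ as you write when invoking Proposition \ref{interpolatingbesov}, but this is harmless since the fine-index-irrelevance argument you give covers $[HB^0_{\infty,1},HB^1_{\infty,1}]_{\theta,\infty}=HB^\theta_{\infty,\infty}$ just as well.
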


\begin{proof}
By Proposition \ref{sandwichinglip},
\begin{align*}
HB^1_{\infty,1}(\R^{d+1})=[HB^1_{\infty,1}(\R^{d+1}),&HB^1_{\infty,\infty}(\R^{d+1})]_{1,1}\subseteq \textnormal{Lip}_{CC}(\R^{d+1})\subseteq \\
&\subseteq [HB^1_{\infty,1}(\R^{d+1}),HB^1_{\infty,\infty}(\R^{d+1})]_{1,\infty}=HB^1_{\infty,\infty}(\R^{d+1}).
\end{align*}
By Proposition \ref{sandwichingc},
\begin{align*}
HB^0_{\infty,1}(\R^{d+1})=[HB^0_{\infty,1}(\R^{d+1}),&HB^0_{\infty,\infty}(\R^{d+1})]_{0,1}\subseteq BC(\R^{d+1})\subseteq \\
&\subseteq [HB^0_{\infty,1}(\R^{d+1}),HB^0_{\infty,\infty}(\R^{d+1})]_{0,\infty}=HB^0_{\infty,\infty}(\R^{d+1}).
\end{align*}
As a consequence, for $\theta\in (0,1)$
\[[BC(\R^{d+1}),\textnormal{Lip}_{CC}(\R^{d+1})]_{\theta,\infty}=[HB^0_{\infty,\infty}(\R^{d+1}),HB^1_{\infty,\infty}(\R^{d+1})]_{\theta,\infty}=C^\theta_{CC}(\R^{d+1}),\]
where the last identity follows from Proposition \ref{interpolatingbesov}.

\end{proof}

\subsection{Global situation and proof of Theorem \ref{interpolatingwithlipschitz} }

In this section we will prove Theorem \ref{interpolatingwithlipschitz}. Let $M$ denote a closed sub-Riemannian $H$-manifold. We cover $M$ with Heisenberg charts $(U_i,\kappa_i)_{i=1}^N$. Here we assume that the Heisenberg diffeomorphisms $\kappa_i:U_i\xrightarrow{\sim} V_i\subseteq \R^{d+1}$ extend as diffeomorphisms to slightly bigger domains, preserving the Heisenberg structure. We also take a partition of unity $(\chi_i)_{i=1}^N\subseteq C^\infty(M)$ subordinate to the cover $(U_i)_{i=1}^N$.

\begin{proof}[Proof of Theorem \ref{interpolatingwithlipschitz}]
The choice of Heisenberg charts and partition of unity induces the following factorization of the identity operator on $C(M)$:
\[C(M)\to \bigoplus_{i=1}^N BC(\R^{d+1})\to C(M).\]
Similarly, we factor the identity operator on $\textnormal{Lip}_{CC}(M)$:
\[\textnormal{Lip}_{CC}(M)\to \bigoplus_{i=1}^N \textnormal{Lip}_{CC}(\R^{d+1})\to \textnormal{Lip}_{CC}(M).\]
By Corollary \ref{interpolatingwithlipschitzrd}, we can factor the identity operator on $[C(M),\textnormal{Lip}_{CC}(M)]_{\theta,\infty}$ as
\[[C(M),\textnormal{Lip}_{CC}(M)]_{\theta,\infty}\to \bigoplus_{i=1}^N C_{CC}^{\theta}(\R^{d+1})\to [C(M),\textnormal{Lip}_{CC}(M)]_{\theta,\infty}.\]
The theorem follows.
\end{proof}

\subsection{Proof of Theorem \ref{mainweakschatten}}

We saw in Section \ref{provingweakschattenlipschitz} that Theorem \ref{mainweakschatten} holds true in the case of Lipschitz functions. We will now proceed to prove the general case using interpolation.

\begin{prop}
\label{interpolatingwithcompactsforealthistime}
For any separable Hilbert space $\He$ and $p\in (1,\infty)$,
\[[\Ko(\He),\ellL^{p,\infty}(\He)]_{\theta,\infty}=\ellL^{p/\theta,\infty}(\He).\]
\end{prop}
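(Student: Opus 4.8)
The claim is the real interpolation identity
$[\Ko(\He),\ellL^{p,\infty}(\He)]_{\theta,\infty}=\ellL^{p/\theta,\infty}(\He)$ for $p\in(1,\infty)$ and $\theta\in(0,1)$. The natural approach is to reduce everything to the commutative model via the singular value sequence: the map $T\mapsto (s_k(T))_{k\in\N}$ is a (nonlinear, but norm-preserving on each space) correspondence under which $\Ko(\He)$ corresponds to $c_0(\N)$ and $\ellL^{p,\infty}(\He)$ corresponds to the weak-$\ell^p$ space $\ell^{p,\infty}(\N)$ with quasi-norm $\|(a_k)\|_{\ell^{p,\infty}}=\sup_k (k+1)^{1/p}a_k^*$, where $a^*$ is the decreasing rearrangement. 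Both $\ell^{p,\infty}$ and $\ellL^{p,\infty}$ are Lorentz spaces: $\ell^{p,\infty}=\ell^{p,\infty}$ and $\ellL^{p,\infty}$ is the noncommutative Lorentz space $\mathcal{L}^{p,\infty}=\mathcal{L}^{(p,\infty)}$. So the first step is to invoke (or recall) the identification of these weak Schatten ideals as noncommutative Lorentz spaces associated to the trace on $\Bo(\He)$.

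First I would record the commutative interpolation fact: for $1<p<\infty$ and $\theta\in(0,1)$, one has $[c_0(\N),\ell^{p,\infty}(\N)]_{\theta,\infty}=\ell^{p/\theta,\infty}(\N)$. This follows from the classical computation of the $K$-functional for the couple $(L^\infty,L^{p,\infty})$ — or more elementarily $(c_0,\ell^{p,\infty})$ — on a measure space; the $K$-functional $K(t,a;c_0,\ell^{p,\infty})$ is equivalent to $\sup_{s>0}\min(1,t s^{-1/p}) a^*(s)$-type expressions, and the $(\theta,\infty)$ norm reproduces exactly the weak-$\ell^{p/\theta}$ quasi-norm; see Bergh--L\"ofstr\"om \cite{berghlofstrom} and the theory of Lorentz spaces. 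Alternatively one can cite the general result of Lions--Peetre type that real interpolation of Lorentz spaces $[L^{p_0,q_0},L^{p_1,q_1}]_{\theta,q}=L^{p,q}$ with $1/p=(1-\theta)/p_0+\theta/p_1$, taking $p_0=\infty$, so $1/p=\theta/p_1$, i.e. $p=p_1/\theta$, and $q=q=\infty$.

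The second step is to transfer this to the noncommutative setting. The cleanest route is to use the general theorem (due to Arazy, and in the form needed here appearing in the literature on symmetrically normed ideals, e.g. Dodds--Dodds--de Pagter or the references in \cite{simon}) that for a symmetric function space $E$ and its associated ideal $\mathcal{C}_E$ of operators on $\He$, real interpolation commutes with the passage from $E$ to $\mathcal{C}_E$: $[\mathcal{C}_{E_0},\mathcal{C}_{E_1}]_{\theta,q}=\mathcal{C}_{[E_0,E_1]_{\theta,q}}$. Applying this with $E_0=c_0$, $E_1=\ell^{p,\infty}$, and using the commutative identity from step one, gives $[\Ko(\He),\ellL^{p,\infty}(\He)]_{\theta,\infty}=\mathcal{C}_{\ell^{p/\theta,\infty}}=\ellL^{p/\theta,\infty}(\He)$, which is the claim. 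If one prefers to avoid quoting the transfer theorem as a black box, the standard argument is to prove the $\lesssim$ inclusion by a direct $K$-functional estimate — given $T$ with $\|T\|_{\ellL^{p/\theta,\infty}}\le 1$, decompose $T=A+B$ using spectral projections onto the singular values above/below a threshold depending on $t$, estimate $\|A\|_{\Ko}$ and $\|B\|_{\ellL^{p,\infty}}$ — and the reverse inclusion $\gtrsim$ by the lower bound on the $K$-functional obtained from testing against rank-one or finite-rank operators; the hypothesis $p>1$ is used to ensure $\ellL^{p,\infty}$ is a genuine (quasi-)Banach ideal and that the $K$-functional computations converge.

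\textbf{Main obstacle.} The only real subtlety is the passage between the commutative and noncommutative pictures: real interpolation spaces are defined via the $K$-functional, and although $T\mapsto (s_k(T))_k$ is not linear, the key point — that $K(t,T;\Ko,\ellL^{p,\infty})\sim K(t,(s_k(T))_k;c_0,\ell^{p,\infty})$ — requires the fact that optimal decompositions $T=A+B$ can be taken "diagonal" with respect to the singular value decomposition of $T$, which is a form of the Calkin/Ky Fan correspondence for symmetrically normed ideals. Once this is in hand, everything reduces to the scalar Lorentz-space computation, which is entirely classical. I would therefore structure the proof as: (i) cite the Lorentz-ideal identification of $\ellL^{p,\infty}$ and $\Ko$; (ii) cite or sketch the $K$-functional transfer; (iii) quote the classical $[c_0,\ell^{p,\infty}]_{\theta,\infty}=\ell^{p/\theta,\infty}$.
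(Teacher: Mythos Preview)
Your argument is correct, but the paper takes a much shorter route. Rather than transferring the problem to the commutative model via singular values and invoking an Arazy/Dodds--Dodds--de Pagter transfer principle, the paper simply cites the known endpoint identity
\[
[\Ko(\He),\ellL^{1}(\He)]_{\theta,\infty}=\ellL^{1/\theta,\infty}(\He)
\]
(from \cite[Appendix B, Chapter IV]{c}) and then applies the Reiteration Theorem \cite[Section 3.5]{berghlofstrom}: since $\ellL^{p,\infty}(\He)=[\Ko(\He),\ellL^{1}(\He)]_{1/p,\infty}$ is of class $1/p$ and $\Ko(\He)$ is of class $0$ with respect to the couple $(\Ko(\He),\ellL^{1}(\He))$, reiteration gives $[\Ko(\He),\ellL^{p,\infty}(\He)]_{\theta,\infty}=[\Ko(\He),\ellL^{1}(\He)]_{\theta/p,\infty}=\ellL^{p/\theta,\infty}(\He)$.

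The trade-off: the paper's proof is two lines and uses only standard real-interpolation machinery once the $\ellL^1$ endpoint is granted, whereas your approach is more self-contained in that it does not presuppose that specific endpoint result, but at the cost of importing the noncommutative-to-commutative $K$-functional transfer (which, as you correctly note, is the only nontrivial step in your argument). Both are valid; the reiteration argument is the more economical one here.
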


\begin{proof}
Recall, for instance from \cite[Appendix B, Chapter IV]{c}, for any separable Hilbert space $\He$
\[[\Ko(\He),\ellL^1(\He)]_{\theta,\infty}=\ellL^{1/\theta,\infty}(\He).\]
The proposition follows from this and the Re-iteration Theorem, see \cite[Section 3.5]{berghlofstrom}.
\end{proof}

\begin{proof}[Proof of Theorem \ref{mainweakschatten}]
Let $M$ be a closed sub-Riemannian $H$-manifold of dimension $d+1$, $E\to M$ a vector bundle and $T\in \Psi^0_H(M;E)$.
By Theorem \ref{variousheisenbergthings}, the inclusions $W^{1/2}_H(M,E)\to L^2(M,E)$ and $L^2(M,E)\to W^{-1/2}_H(M,E)$ induce a continuous injection
$$ \Bo(W^{-1/2}_H(M;E),W^{1/2}_H(M;E))\to \mathcal{L}^{d+2,\infty}(L^2(M;E)).$$
The results of Section \ref{provingweakschattenlipschitz} imply the fact that $\mathfrak{C}_T:\textnormal{Lip}_{CC}(M)\to\mathcal{L}^{d+2,\infty}(L^2(M;E))$, $a\mapsto [T,a]$ is continuous. Using the fact that $\mathfrak{C}_T:C(M)\to \Ko(L^2(M))$, $a\mapsto [T,a]$ is continuous, Theorem \ref{mainweakschatten} follows from Theorem \ref{interpolatingwithlipschitz} and Proposition \ref{interpolatingwithcompactsforealthistime}.
\end{proof}

In the following proposition we improve the weak Schatten norm estimate of Theorem \ref{mainweakschatten} for symbols $a\in [C(M),\textnormal{Lip}_{CC}(M)]_\alpha\subseteq C^\alpha_{CC}(M)$ -- the slightly smaller complex interpolation space. The inclusion $[C(M),\textnormal{Lip}_{CC}(M)]_\alpha\subseteq C^\alpha_{CC}(M)$ is strict if $\alpha\in (0,1)$, since $\textnormal{Lip}_{CC}(M)$ is dense in $[C(M),\textnormal{Lip}_{CC}(M)]_\alpha$ by \cite[Theorem 4.2.2]{berghlofstrom}.

\begin{prop}
\label{complexinterandsobreg}
Assume that $T\in \Psi^0_H(M;E)$. There is a constant $C_{T}>0$ such that for any $a\in [C(M),\textnormal{Lip}_{CC}(M)]_\alpha$,
\[ \|[T,a]\|_{W^{-\alpha/2}_H(M,E)\to W^{\alpha/2}_H(M;E)}\leq 2^{1-\alpha}\|T\|_{\Bo(L^2(M))}^{1-\alpha}C_{T}^\alpha\|a\|_{[C(M),\textnormal{Lip}_{CC}(M)]_\alpha}.\]
\end{prop}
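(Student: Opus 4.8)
The plan is to derive this estimate by complex interpolation between two endpoint bounds that are already at our disposal: the elementary inequality $\|[T,a]\|_{\Bo(L^2(M;E))}\le 2\|T\|_{\Bo(L^2(M;E))}\|a\|_{C(M)}$, and the Lipschitz estimate \eqref{tcomma} established in Section \ref{provingweakschattenlipschitz}, namely $\|[T,a]\|_{\Bo(W^{-1/2}_H(M;E),W^{1/2}_H(M;E))}\le C_T\|a\|_{\textnormal{Lip}_{CC}(M)}$. As in Section \ref{provingweakschattenlipschitz} we may assume $E=\C^l$, and we fix the self-adjoint positive $H$-elliptic operator $\Delta_H$ so that $\|v\|_{W^s_H}=\|(1+\Delta_H)^{s/2}v\|_{L^2}$. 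The point is that
\[\|[T,a]\|_{\Bo(W^{-\alpha/2}_H,W^{\alpha/2}_H)}=\big\|(1+\Delta_H)^{\alpha/4}\,[T,a]\,(1+\Delta_H)^{\alpha/4}\big\|_{\Bo(L^2(M;E))},\]
so one only needs to control a single conjugated operator on $L^2$.

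The first step is to construct an analytic family interpolating the two endpoints. Given $a\in[C(M),\textnormal{Lip}_{CC}(M)]_\alpha$ and $\varepsilon>0$, I would pick an admissible function $F$ on the closed strip $\bar S=\{z:0\le\Re z\le 1\}$, holomorphic in the interior and bounded and continuous on $\bar S$ with values in $C(M)+\textnormal{Lip}_{CC}(M)$, with $F(\alpha)=a$, $\sup_t\|F(it)\|_{C(M)}\le\|a\|_{[C(M),\textnormal{Lip}_{CC}(M)]_\alpha}+\varepsilon$ and $\sup_t\|F(1+it)\|_{\textnormal{Lip}_{CC}(M)}\le\|a\|_{[C(M),\textnormal{Lip}_{CC}(M)]_\alpha}+\varepsilon$, and set $G(z):=(1+\Delta_H)^{z/4}[T,F(z)](1+\Delta_H)^{z/4}$. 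Since $\Delta_H\ge 0$ is self-adjoint, $(1+\Delta_H)^{it/4}$ is unitary on $L^2(M;E)$ (and isometric on every $W^s_H(M;E)$). I would test $G(z)$ only against finite linear combinations $u,v$ of eigenfunctions of $\Delta_H$; these are smooth by $H$-ellipticity and dense in every $W^s_H(M;E)$, and $(1+\Delta_H)^{z/4}u$, $(1+\Delta_H)^{\bar z/4}v$ are again such finite combinations, entire in $z$ and bounded on $\bar S$. Using that multiplication by $F(z)$ carries smooth functions into $C(M)\subseteq L^2(M;E)$ holomorphically in $z$ and that $T$ is $L^2$-bounded, one checks that $z\mapsto\langle G(z)u,v\rangle$ is holomorphic in the interior of $\bar S$ and bounded and continuous on $\bar S$, so the three-lines principle applies with no growth issues.

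Then I would apply Hadamard's three-lines theorem to each $z\mapsto\langle G(z)u,v\rangle$. On $\Re z=0$, unitarity gives $\|G(it)\|_{\Bo(L^2)}=\|[T,F(it)]\|_{\Bo(L^2)}\le 2\|T\|_{\Bo(L^2)}(\|a\|_{[C(M),\textnormal{Lip}_{CC}(M)]_\alpha}+\varepsilon)$. On $\Re z=1$, writing $(1+\Delta_H)^{(1+it)/4}=(1+\Delta_H)^{it/4}(1+\Delta_H)^{1/4}$ and peeling off the unitary imaginary powers, $\|G(1+it)\|_{\Bo(L^2)}=\|[T,F(1+it)]\|_{\Bo(W^{-1/2}_H,W^{1/2}_H)}\le C_T(\|a\|_{[C(M),\textnormal{Lip}_{CC}(M)]_\alpha}+\varepsilon)$ by \eqref{tcomma}. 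Hence $|\langle G(\alpha)u,v\rangle|\le(2\|T\|_{\Bo(L^2)})^{1-\alpha}C_T^\alpha(\|a\|_{[C(M),\textnormal{Lip}_{CC}(M)]_\alpha}+\varepsilon)\|u\|_{L^2}\|v\|_{L^2}$ for $u,v$ in the dense set; taking the supremum over such $u,v$, letting $\varepsilon\to 0$, and recalling $G(\alpha)=(1+\Delta_H)^{\alpha/4}[T,a](1+\Delta_H)^{\alpha/4}$ yields the claimed estimate (and, in particular, shows $[T,a]$ extends boundedly to an operator $W^{-\alpha/2}_H(M;E)\to W^{\alpha/2}_H(M;E)$).

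The main obstacle is making the second step rigorous: $F(z)$ is merely continuous for $0<\Re z<1$ and $(1+\Delta_H)^{z/4}$ is unbounded on $L^2$ off the imaginary axis, so $G(z)$ is not a priori a bounded operator in the interior of the strip. Restricting all pairings to finite linear combinations of eigenfunctions of $\Delta_H$ — on which every complex power acts entirely and boundedly in $z$ — resolves this, and the identification of the two boundary bounds with the elementary $L^2$ estimate and with \eqref{tcomma} is then immediate. One could instead invoke Stein's interpolation theorem directly, but carrying out the three-lines argument by hand makes the precise constant $2^{1-\alpha}\|T\|_{\Bo(L^2)}^{1-\alpha}C_T^\alpha$ transparent.
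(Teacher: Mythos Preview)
Your proof is correct and rests on the same complex-interpolation strategy as the paper, with the identical endpoint bounds $\|[T,a]\|_{\Bo(L^2)}\le 2\|T\|_{\Bo(L^2)}\|a\|_{C(M)}$ and \eqref{tcomma}. The execution differs in one notable way: the paper invokes Seeley's theorem \cite{seeley71} to identify $[\Bo(L^2(M)),\Bo(W^{-1/2}_H,W^{1/2}_H)]_\alpha$ with $\Bo(W^{-\alpha/2}_H,W^{\alpha/2}_H)$ --- via the operator $L(S)=D^{1/2}SD^{1/2}$ on $\Bo(L^2)$, which has bounded imaginary powers because $D=(1+\Delta_H)^{1/2}$ does --- and then applies abstract complex interpolation to the commutator map $a\mapsto[T,a]$. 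You instead run the three-lines argument by hand, testing the analytic family $G(z)=(1+\Delta_H)^{z/4}[T,F(z)](1+\Delta_H)^{z/4}$ against finite linear combinations of eigenfunctions of $\Delta_H$, on which all complex powers act entirely. Your route is more elementary and self-contained, avoiding the domain-of-complex-powers machinery altogether; the paper's route is shorter to state and situates the result in a cleaner functional-analytic framework. Both deliver the same explicit constant $2^{1-\alpha}\|T\|_{\Bo(L^2)}^{1-\alpha}C_T^\alpha$.
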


\begin{proof}
As above, we can assume that $E$ is the trivial line bundle. We let $D:=\sqrt{\Delta_H+1}$, so $W^\alpha_H(M)=\mathrm{Dom}(D^\alpha)$. By \cite[Theorem 3]{seeley71}, for any Banach space $X$, operator $L$ with bounded imaginary powers on $X$ and $\alpha\in [0,1]$,
\[[X,\mathrm{Dom}(L)]_\alpha=\mathrm{Dom}(L^\alpha).\]
We let $X=\Bo(L^2(M))$ and $L(T)=D^{1/2}TD^{1/2}$ with $\Dom(L)=\Bo(W^{-1/2}_H(M),W^{1/2}_H(M))$. The operator $L$ has bounded imaginary powers because $L^z(T)=D^{z/2}TD^{z/2}$ and $D$, being self-adjoint, has bounded imaginary powers. Since 
$$\mathrm{Dom}(L^\alpha)=\Bo(W^{-\alpha/2}_H(M),W^{-\alpha/2}_H(M)),$$ 
we have
\[ [\Bo(L^2(M)),\Bo(W^{-1/2}_H(M),W^{1/2}_H(M))]_\alpha=\Bo(W^{-\alpha/2}_H(M),W_H^{\alpha/2}(M)).\]

 By the results of Section \ref{provingweakschattenlipschitz}, $\tilde{\mathfrak{C}}_T:\textnormal{Lip}_{CC}(M)\to\Bo(W^{-1/2}_H(M),W^{1/2}_H(M))$, $a\mapsto [T,a]$ is continuous. We estimate
\[\|[T,a]\|_{\Bo(L^2(M))}\leq 2\|T\|_{\Bo(L^2(M))}\|a\|_{\Bo(L^2(M))}=2\|a\|_{C(M)},\]
so $\tilde{\mathfrak{C}}_T:C(M)\to \Bo(L^2(M))$, $a\mapsto [T,a]$ is continuous. By interpolation, the considerations above imply the fact that $\tilde{\mathfrak{C}}_T$ defines a continuous mapping
$$ [C(M),\textnormal{Lip}_{CC}(M)]_\alpha\to \Bo(W^{-\alpha/2}_H(M),W^{\alpha/2}_H(M)),$$
with norm bounded by $2^{1-\alpha}\|T\|_{\Bo(L^2(M))}^{1-\alpha}C_{T}^\alpha$.
\end{proof}

\large
\section{Applications}
\normalsize
\label{applicationssection}

\subsection{The Carnot-Carath\'eodory metric as a limit point of Connes metrics}
\label{ccmetsubse}

A well known result in noncommutative geometry asserts that a spectral triple on a unital $C^*$-algebra determines a metric on its state space. It is a natural question to ask whether the Carnot-Carath\'eodory metric comes from a spectral triple of the same metric dimension as the Hausdorff dimension of the Carnot-Carath\'eodory metric. Partial answers were given in \cite{hasselmann}. A natural place to look for a spectral triple is in the Heisenberg calculus, as Ponge's results \cite{ponge} on spectral asymptotics give the correct metric dimension. The metric dimension of a unital spectral triple $(\mathcal{A},\He,D)$ is defined as the number
\[\inf \,\{p\in \R_+: (i+D)^{-1}\in \ellL^p(\He)\}.\]
On a sub-Riemannian manifold a horizontal Dirac operator does not give rise to a spectral triple (see \cite{hasselmann}). In this section we will show that the Carnot-Carath\'eodory metric may be obtained as the Gromov-Hausdorff limit of a family of metrics coming from a spectral triple whose metric dimension coincides with the Hausdorff dimension. Recent partial results were obtained in \cite{hasselmann}. We recall the definition of Gromov-Hausdorff distance.

\begin{deef}[Gromov-Hausdorff distance]
Let $(X,\rd_X)$ and $(Y,\rd_Y)$ be two compact metric spaces. The Gromov-Hausdorff distance between $(X,\rd_X)$ and $(Y,\rd_Y)$ is the infimum of all $r\geq 0$ such that there exist isometric embeddings $i:X\to Z$ and $j:Y\to Z$ into a compact metric space $(Z,\rd_Z)$ with $\rd_Z(i(X),j(Y))\leq r$.
\end{deef}

A notion related to spectral triples, better adapted to sub-Riemannian metrics, was introduced in \cite{hasselmann}: that of a \emph{degenerate} spectral triple. A degenerate spectral triple $(\mathcal{A},\He,D)$ satisfies the usual axioms except that $(D+i)^{-1}$ is compact only on the orthogonal complement of all infinite-dimensional eigenspaces.

\begin{deef}[The Connes metric]
Associated with a (degenerate) spectral triple there is a metric $\rd_D$ on the state space $S(A)$ defined by
\[\rd_D(\omega,\omega'):=\sup\{|\omega(a)-\omega'(a)|:a\in \mathcal{A}:\; \|[D,a]\|_{\Bo(\He)}\leq 1\}.\]
\end{deef}

Connes metrics are well studied, see for instance \cite[Chapter VI]{c} or \cite{kerrli,rieffel1,rieffel2}.
The prototypical example is when $M$ is a closed Riemannian manifold, $\mathcal{A}=C^\infty(M)$, $\He=L^2(M;S)$ for a Clifford bundle $S\to M$ and $D$ is a Dirac type operator on $S$. In this case, $\rd_D$ coincides with the geodesic distance on $M$.

In \cite{hasselmann}, a degenerate spectral triple was constructed on any Riemannian Carnot manifold from a choice of a spin structure $S^HM$ (see Definition \ref{cantotdef} on page \pageref{cantotdef}) on the horizontal bundle $H$ (assuming $H$ admits a spin structure). This degenerate spectral triple takes the form $(C^\infty(M),L^2(M,S^HM),D^H)$, where $D^H$ is a horizontal Dirac operator on $S^HM$. A key feature of this degenerate spectral triple is the identity
\[[D^H,f]=c_H(\rd^Hf),\]
where $c_H:H^*\to \mathrm{End}(S^HM)$ denotes Clifford multiplication and $\rd^H:C^\infty(M)\to C^\infty(M,H^*)$ denotes exterior horizontal differentiation. In particular, it follows from the definition of Connes metric that 
\[\rd_{D^H}=\rd_{CC},\]
see \cite[ Theorem 3.3.6]{hasselmann}. We recall the contents of \cite[Proposition $7.3.1.$ii)]{hasselmann}. In \cite{hasselmann}, the result is stated in larger generality on Carnot manifolds.

\begin{prop}
Let $M$ be a closed sub-Riemannian $H$-manifold equipped with a horizontal Dirac operator $D^H:C^\infty(M,S^HM)\to C^\infty(M,S^HM)$. If $(0,1]\ni \theta\mapsto D_\theta\in \Psi^1_H(M,S^HM )$ is a family of operators such that
\[\forall \epsilon>0\;\exists \delta>0:\quad \quad \|[D_\theta-D^H,f]\|_{\Bo(L^2(M,S^HM))}<\epsilon\|f\|_{\textnormal{Lip}_{CC}(M)}\;\forall \theta\in (0,\delta),\]
then for any $\epsilon>0$ there is a $\delta>0$ such that
\[(1-\epsilon)\rd_{D_\theta}(x,y)\leq \rd_{CC}(x,y)\leq (1+\epsilon)\rd_{D_\theta}(x,y)\quad\forall x,y\in M,\; \theta\in (0,\delta).\]
\end{prop}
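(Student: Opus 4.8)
The statement is a perturbative stability result for Connes metrics, and the plan is to reduce it to the known identity $\rd_{D^H}=\rd_{CC}$ together with the elementary observation that the Connes metric depends monotonically and Lipschitz-continuously on the operator norm of the commutators $[D,a]$. First I would recall, from \cite[Theorem 3.3.6]{hasselmann}, that $[D^H,f]=c_H(\rd^Hf)$ and hence $\|[D^H,f]\|_{\Bo(L^2(M,S^HM))}=\|\rd^H f\|_{L^\infty}=|f|_{\textnormal{Lip}_{CC}(M)}$, so that the unit ball $\{a:\|[D^H,a]\|\le 1\}$ is exactly the unit ball of the Lipschitz seminorm and $\rd_{D^H}=\rd_{CC}$.

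\smallskip

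The core of the argument is then the following comparison. Fix $\epsilon\in(0,1)$ and choose $\delta>0$ as in the hypothesis, so that $\|[D_\theta-D^H,f]\|_{\Bo(L^2)}<\epsilon\,\|f\|_{\textnormal{Lip}_{CC}(M)}$ for all $\theta\in(0,\delta)$. For such $\theta$ and any $a$ with $\|[D_\theta,a]\|_{\Bo(L^2)}\le 1$, the triangle inequality gives
\[
\|[D^H,a]\|_{\Bo(L^2)}\le \|[D_\theta,a]\|_{\Bo(L^2)}+\|[D_\theta-D^H,a]\|_{\Bo(L^2)}\le 1+\epsilon\,\|a\|_{\textnormal{Lip}_{CC}(M)}.
\]
Since $\|[D^H,a]\|_{\Bo(L^2)}=|a|_{\textnormal{Lip}_{CC}(M)}$, and since only the seminorm (not the sup-norm) enters the Connes distance — differences $\omega(a)-\omega'(a)$ are unchanged by adding constants, so one may normalize — this reads $|a|_{\textnormal{Lip}_{CC}(M)}(1-\epsilon)\le 1$, i.e. $(1-\epsilon)^{-1}a$ (after subtracting a suitable constant) lies in the unit ball of the $D^H$-commutator condition. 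Taking the supremum over all such $a$ in the definition of $\rd_{D_\theta}$ yields
\[
\rd_{D_\theta}(x,y)\le \frac{1}{1-\epsilon}\,\rd_{D^H}(x,y)=\frac{1}{1-\epsilon}\,\rd_{CC}(x,y),
\]
which is the left inequality (up to replacing $\epsilon$ by a comparable quantity $\epsilon'$ with $\epsilon'\to 0$ as $\epsilon\to 0$). The reverse inequality is symmetric: for $a$ with $|a|_{\textnormal{Lip}_{CC}(M)}=\|[D^H,a]\|_{\Bo(L^2)}\le 1$ we get $\|[D_\theta,a]\|_{\Bo(L^2)}\le 1+\epsilon\,\|a\|_{\textnormal{Lip}_{CC}(M)}$, and bounding $\|a\|_{\textnormal{Lip}_{CC}(M)}$ by a fixed multiple of $1$ (again normalizing the additive constant, and using compactness of $M$ to control $\|a\|_{L^\infty}$ by $\mathrm{diam}_{CC}(M)\cdot|a|_{\textnormal{Lip}_{CC}}$) gives $\rd_{CC}=\rd_{D^H}\le(1+\epsilon')\rd_{D_\theta}$.

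\smallskip

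The one point requiring care — and the main (modest) obstacle — is the bookkeeping of the additive-constant normalization and the passage from the seminorm estimate to the genuine bound on $\|a\|_{\textnormal{Lip}_{CC}(M)}=\|a\|_{L^\infty}+|a|_{\textnormal{Lip}_{CC}(M)}$ that appears on the right-hand side of the hypothesis. This is handled by noting that in $\rd_{D}(\omega,\omega')=\sup\{|\omega(a)-\omega'(a)|:\|[D,a]\|\le 1\}$ one may freely replace $a$ by $a-c$ for any constant $c$, so without loss of generality $\int_M a=0$ (or any other normalization), and then $\|a\|_{L^\infty}\le \mathrm{diam}_{CC}(M)\cdot|a|_{\textnormal{Lip}_{CC}(M)}$ on the connected compact manifold $M$; hence $\|a\|_{\textnormal{Lip}_{CC}(M)}\le(1+\mathrm{diam}_{CC}(M))\,|a|_{\textnormal{Lip}_{CC}(M)}$, and the factor $\epsilon$ in the hypothesis can be absorbed into a comparable $\epsilon'$ by shrinking $\delta$. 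Collecting the two inequalities and relabelling $\epsilon'$ as $\epsilon$ completes the proof.
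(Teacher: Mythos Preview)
The paper does not supply its own proof of this proposition: it is quoted verbatim as \cite[Proposition 7.3.1.ii)]{hasselmann} and used as a black box to deduce Corollary~\ref{ghlimit}. Your argument is correct and is the natural one---comparing the unit balls $\{a:\|[D_\theta,a]\|\le 1\}$ and $\{a:\|[D^H,a]\|\le 1\}$ via the triangle inequality, using the identification $\|[D^H,a]\|=\sup_x|\rd^H a(x)|=|a|_{\textnormal{Lip}_{CC}}$ (which is what underlies $\rd_{D^H}=\rd_{CC}$), and absorbing the full Lipschitz norm into the seminorm by the additive-constant normalization on a compact space. This is almost certainly the argument in Hasselmann's thesis as well; there is no meaningfully different route to a statement of this shape.
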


In particular, $\rd_{D_\theta}\to \rd_{CC}$ with respect to Gromov-Hausdorff distance. We arrive at the following corollary of Theorem \ref{mainbdd}.

\begin{cor}
\label{ghlimit}
Let $M$ be a $d+1$-dimensional closed sub-Riemannian $H$-manifold. Assume that $S\in \Psi^1_H(M,S^HM)$ is an operator such that
\[D_\theta:=D^H+\theta S\quad\mbox{is $H$-elliptic for all small enough} \quad\theta>0.\]
Then $(C^\infty(M),L^2(M,S^HM),D_\theta)$ is a spectral triple of metric dimension $d+2$ for all small enough $\theta>0$ satisfying that with respect to Gromov-Hausdorff distance 
\[(M,\rd_{D_\theta})\to (M,\rd_{CC}),\quad\mbox{as}\quad \theta\to 0.\]
\end{cor}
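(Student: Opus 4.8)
The plan is to read off the spectral triple axioms and the metric dimension directly from the Heisenberg calculus, and then to obtain the Gromov--Hausdorff convergence by feeding the family $\theta\mapsto D_\theta$ into the Proposition preceding this corollary, the required smallness coming from Theorem \ref{mainbdd}. Throughout we take $D_\theta=D^H+\theta S$ to be self-adjoint on $L^2(M,S^HM)$ with domain $W^1_H(M,S^HM)$ (this is the case of interest, e.g.\ when $S$ is formally self-adjoint), and we fix $\delta_0>0$ so small that $D_\theta$ is $H$-elliptic for all $\theta\in(0,\delta_0]$; by Theorem \ref{variousheisenbergthings} such a $D_\theta$ then has compact resolvent.

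The first step is to settle the spectral triple and metric dimension claims. For $a\in C^\infty(M)$ one has $[D_\theta,a]\in\Psi^0_H(M,S^HM)$ by Proposition \ref{smoothcom}, hence $[D_\theta,a]$ is bounded on $L^2(M,S^HM)$ by Theorem \ref{l2andwhatnot}; together with compactness of $(D_\theta+i)^{-1}$ this shows that $(C^\infty(M),L^2(M,S^HM),D_\theta)$ is a spectral triple for every $\theta\in(0,\delta_0]$. For the metric dimension, Theorem \ref{variousheisenbergthings} applied with $m=1$ on the $(d+1)$-dimensional manifold $M$ gives $(D_\theta+i)^{-1}\in\ellL^{d+2,\infty}(L^2(M,S^HM))\subseteq\ellL^p$ for every $p>d+2$, so the metric dimension is at most $d+2$; moreover $D_\theta^2+1\in\Psi^2_H(M,S^HM)$ is $H$-elliptic, self-adjoint and positive, so the Weyl law of \cite{ponge} (cf.\ \eqref{sublapasympt}) gives $\lambda_k(D_\theta^2+1)\sim c\,k^{2/(d+2)}$ for some $c>0$, whence the singular values of $(D_\theta+i)^{-1}$ are comparable to $k^{-1/(d+2)}$ and $(D_\theta+i)^{-1}\notin\ellL^p$ for $p\leq d+2$. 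Thus the metric dimension is exactly $d+2$.

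The second step is the Gromov--Hausdorff statement, for which I apply the Proposition preceding this corollary to the family $(0,\delta_0]\ni\theta\mapsto D_\theta\in\Psi^1_H(M,S^HM)$. What must be verified is the uniform smallness of $[D_\theta-D^H,f]$ as $\theta\to0$. Here $D_\theta-D^H=\theta S$, so $[D_\theta-D^H,f]=\theta[S,f]$, and Theorem \ref{mainbdd} applied to $S\in\Psi^1_H(M,S^HM)$ yields a constant $C_S>0$ with $\|[S,f]\|_{\Bo(L^2(M,S^HM))}\leq C_S\|f\|_{\textnormal{Lip}_{CC}(M)}$ for all $f\in\textnormal{Lip}_{CC}(M)$; hence $\|[D_\theta-D^H,f]\|_{\Bo(L^2(M,S^HM))}\leq\theta C_S\|f\|_{\textnormal{Lip}_{CC}(M)}$, and given $\epsilon>0$ the choice $\delta:=\min(\delta_0,\epsilon/C_S)$ makes the hypothesis hold. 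The Proposition then provides, for each $\epsilon>0$, a $\delta>0$ such that $(1-\epsilon)\rd_{D_\theta}(x,y)\leq\rd_{CC}(x,y)\leq(1+\epsilon)\rd_{D_\theta}(x,y)$ for all $x,y\in M$ and $\theta\in(0,\delta)$; in particular $\rd_{D_\theta}$ is a genuine finite metric on $M$ for such $\theta$. These inequalities give $|\rd_{D_\theta}(x,y)-\rd_{CC}(x,y)|\leq\frac{\epsilon}{1-\epsilon}\rd_{CC}(x,y)\leq\frac{\epsilon}{1-\epsilon}\,\mathrm{diam}_{\rd_{CC}}(M)$, and $\mathrm{diam}_{\rd_{CC}}(M)<\infty$ since $M$ is compact; using the identity map of $M$ as a correspondence between $(M,\rd_{D_\theta})$ and $(M,\rd_{CC})$, the standard estimate for the Gromov--Hausdorff distance yields $\rd_{GH}\big((M,\rd_{D_\theta}),(M,\rd_{CC})\big)\leq\frac{\epsilon}{2(1-\epsilon)}\,\mathrm{diam}_{\rd_{CC}}(M)$, which tends to $0$ as $\epsilon\to0$, i.e.\ as $\theta\to0$.

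The main obstacle is largely bookkeeping: one has to fix the self-adjointness convention for $D_\theta$ and establish the sharpness half of the metric-dimension computation, both routine from the cited results on the Heisenberg calculus. The genuine content is the estimate $\|[\theta S,f]\|_{\Bo(L^2(M,S^HM))}\lesssim\theta\|f\|_{\textnormal{Lip}_{CC}(M)}$, which is precisely the input demanded by the Proposition preceding this corollary and which is supplied by Theorem \ref{mainbdd}.
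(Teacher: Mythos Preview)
Your proof is correct and follows exactly the route the paper indicates: the corollary is presented without an explicit proof, merely as a consequence of Theorem~\ref{mainbdd} via the preceding Proposition (Hasselmann's result), and you have filled in precisely those details. Your additional bookkeeping on the spectral triple axioms and the sharp metric dimension via the Weyl law is routine from the cited results and matches the paper's implicit expectations.
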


\begin{remark}
The operator $S\in \Psi^1_H(M,S^HM)$ can be chosen arbitrarily as long as the principal symbol of $S$ acts as the projection onto the kernel of the principal symbol of $D^H$ in any fiberwise representation of the symbol algebra.
\end{remark}

\subsection{A regularized functional on the H\"older functions}
\label{subsregfunc}

This subsection studies a regularized variant of a multilinear functional from complex analysis on spaces of H\"{o}lder continuous functions. For smooth functions on $S^{2n-1}$, it goes back to \cite{engguozh}. The results were later extended to pre-compact strictly pseudo-convex domains  $\Omega\subseteq \C^n$ with smooth boundary in \cite{engzh}. We abuse the terminology and also include $S^1$ whenever we refer to a contact manifold, with its usual Szeg\"o projection and the scale of $\rd_{CC}$-H\"older spaces given by $C^\alpha_{CC}(S^1)=C^{\alpha/2}(S^1)$ for $\alpha\in (0,2]$.\newline

First, we need to recall some facts on Dixmier traces. The reader is referred to \cite{sukolord} for details. A state $\omega\in \ell^\infty(\N)^*$ is called a generalized limit if
\begin{enumerate}
\item For any $x\in c_0(\N)$, $\omega(x)=0$.
\item If $x=(x_0,x_1,x_2,\ldots)$ and
$$\sigma_n(x)=(\underbrace{x_0,x_0,\ldots,x_0}_{n \;\mathrm{times}},\underbrace{x_1,x_1,\ldots,x_1}_{n \;\mathrm{times}},\underbrace{x_2,x_2,\ldots,x_2}_{n \;\mathrm{times}},\ldots),$$
then $\omega(x)=\omega(\sigma_n(x))$.
\end{enumerate}
Let $\He$ be a separable Hilbert space. Any generalized limit $\omega$ induces a continuous trace $\tra_\omega:\mathcal{L}^{1,\infty}(\He)\to \C$ by the formula 
\[\tra_\omega(T)=\omega\left(\frac{\sum_{k=0}^N \mu_k(T)}{\log(N+2)}\right)_{N\in \N}.\]
Here $0\leq T\in \mathcal{L}^{1,\infty}(\He)$ has singular values $(\mu_k(T))_{k\in \N}$.
See more in \cite[Theorem 1.3.1]{sukolord}. Dixmier traces can often be computed in smooth settings. For instance, a well known theorem proved independently by Connes and Wodzicki states: if $T$ is a pseudo-differential operator of order $-n$ on an $n$-dimensional closed manifold $M$, then
\[\tra_\omega(T)=\frac{1}{n(2\pi)^n}\int_{S^*M} \sigma_{-n}(T)\ \rd S.\]
Here $\rd S$ denotes the surface measure. If $T$ acts on sections of a vector bundle, the same formula holds with $\sigma_{-n}(T)$ replaced by the fiberwise trace of $\sigma_{-n}(T)$. A similar identity is known in the Heisenberg calculus by results of Ponge \cite{pongeresidue}. 

\begin{deef}
Let $\omega$ be a generalized limit, $M$ a $2n-1$-dimensional contact manifold and $P_M\in \Psi^0_H(M)$ a projection. For $k\geq n$ we define the functionals
\[\xi_{k,\omega},\zeta_{k,\omega}:\underbrace{C^{\frac{n}{k}}_{CC}(M)\hat{\otimes}C^{\frac{n}{k}}_{CC}(M)\hat{\otimes}\cdots \hat{\otimes}C^{\frac{n}{k}}_{CC}(M)\hat{\otimes} C^{\frac{n}{k}}_{CC}(M)}_{2k \;\mathrm{times}}\to \C,\]
by the formulas
\begin{align}
\xi_{k,\omega}(a_1\otimes \cdots \otimes a_{2k}):=\tra_\omega\left(P_Ma_1(1-P_M)a_2P_M\cdots P_Ma_{2k-1}(1-P_M)a_{2k}P_M\right),\\
\zeta_{k,\omega}(a_1\otimes \cdots \otimes a_{2k}):=\tra_\omega\left([P_Ma_1P_M,P_Ma_2P_M]\cdots [P_Ma_{2k-1}P_M,P_Ma_{2k}P_M]\right).
\end{align}
\end{deef}
For $k=n$, recall our convention $C^1_{CC}(M)=\textnormal{Lip}_{CC}(M)$.

In the case $M=\partial \Omega$, for a pre-compact strictly pseudo-convex domain $\Omega\subseteq \C^n$ with smooth boundary, and $P$ being the Szeg\"o projection, the multilinear functional $\zeta_{n,\omega}$ was studied in \cite{engzh} for smooth functions. By \cite[Theorem 11]{engzh}, if $a_1,\ldots, a_{2n}\in C^\infty(\partial\Omega)$, we have
\begin{equation}
\small
\label{engszhangcomp}
\zeta_{n,\omega}(a_1\otimes \cdots \otimes a_{2n})=\frac{1}{n!(2\pi)^n} \int_{\partial \Omega} \mathcal{L}^*(\bar{\partial}_ba_1,\bar{\partial}_ba_2)\cdots \mathcal{L}^*(\bar{\partial}_ba_{2n-1},\bar{\partial}_ba_{2n}) \eta\wedge (\rd\eta)^{n-1}.
\normalsize
\end{equation}
Here $\bar{\partial}_b$ denotes the boundary $\bar{\partial}$-operator, $\mathcal{L}^*$ the dual Levi form of $\partial \Omega$ (cf. Remark \ref{leviformremark}) and $\eta$ the contact form on $\partial \Omega$. The following argument shows $\zeta_{n,\omega}\neq 0$. Take a contact diffeomorphism $\phi:U\to U'\subseteq S^{2n-1}$ of a small neighborhood $U\subseteq \partial \Omega$ onto a neighborhood $U'$ of $(0,0,\ldots,1)\in S^{n-1}$. For a real--valued function $\chi\in C^\infty_c(U)$ of sufficiently small support, we consider the functions $a_{2j-1}:=\chi\cdot \phi^*z_1$ and $a_{2j}:=\chi\cdot \phi^*\bar{z}_1$, for $j=1,\ldots,n$. It follows from a simple computation and \cite[Equation (3)]{engzh} that $\zeta_{n,\omega}(a_1\otimes \cdots \otimes a_{2n})>0$ if $U'$ is small enough.

\begin{remark}
Let us make two computational remarks about $\xi_{k,\omega}$ and $\zeta_{k,\omega}$. If $M=\partial\Omega$ and $a_1,a_3,\ldots, a_{2k-1}\in C^{\frac{n}{k}}_{CC}(\partial \Omega)\cap \mathcal{O}(\Omega)$ and $\bar{a}_2,\bar{a}_4,\ldots, \bar{a}_{2k}\in C^{\frac{n}{k}}_{CC}(\partial \Omega)\cap \mathcal{O}(\Omega)$ then
$$P_{\partial \Omega}a_{2j-1}(1-P_{\partial \Omega})a_{2j}P_{\partial \Omega}=[P_{\partial \Omega}a_{2j-1}P_{\partial \Omega},P_{\partial \Omega}a_{2j}P_{\partial \Omega}],$$
so in this case $\xi_{k,\omega}(a_1\otimes \cdots \otimes a_{2k})=\zeta_{k,\omega}(a_1\otimes \cdots \otimes a_{2k})$.

In general, we observe
\begin{align}
\label{xirefo}
P_Ma_1(1-P_M)a_2P_M&\cdots P_Ma_{2k-1}(1-P_M)a_{2k}P_M\\
\nonumber
&=P_M[P_M,a_1][P_M,a_2]\cdots [P_M,a_{2k-1}][P_M,a_{2k}]\ ,\\
\label{zetarefo}
[P_Ma_1P_M,P_Ma_2P_M]&\cdots [P_Ma_{2k-1}P_M,P_Ma_{2k}P_M]\\
\nonumber
&=P_M\left[[P_M,a_1],[P_M,a_2]\right]\cdots \left[[P_M,a_{2k-1}],[P_M,a_{2k}]\right]
\end{align}
\end{remark}

Let $V_k\subseteq (C^{\frac{n}{k}}_{CC}(M))^{\hat{\otimes}2k}$ be the closed subspace spanned by elements of the form $a_1\otimes \cdots \otimes a_{2k}$ such that there exists a $j$  and a $\beta>n/k$ with $a_j\in C^\beta_{CC}(M)$.

\begin{thm}
\label{vanishingcor}
The multilinear functionals $\xi_{k,\omega}$ and $\zeta_{k,\omega}$ are continuous and do not vanish in general. Furthermore,
$$\xi_{k,\omega}|_{V_k}=\zeta_{k,\omega}|_{V_k}=0.$$
\end{thm}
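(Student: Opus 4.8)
The plan is to combine three ingredients: the weak Schatten estimate of Theorem~\ref{mainweakschatten}, the Hölder interpolation factorization behind it, and a Hölder-type inequality for weak Schatten norms together with the stability of the Dixmier trace under small perturbations. First I would establish continuity. Using the reformulations \eqref{xirefo} and \eqref{zetarefo}, the functional $\xi_{k,\omega}$ is (up to the trace-class operator $\tra_\omega$ evaluated on) a product of $2k$ commutators $[P_M,a_j]$, and $\zeta_{k,\omega}$ a product of $k$ double commutators; in either case one has $2k$ factors of the form $[P_M,a_j]$ in total. By Theorem~\ref{mainweakschatten} with $\alpha=n/k$ (legitimate since $n/k\in(0,1]$ for $k\geq n$), each such commutator lies in $\ellL^{\frac{(2n-1)+1}{n/k},\infty}(L^2(M))=\ellL^{2k,\infty}(L^2(M))$ with norm controlled by $\|a_j\|_{C^{n/k}_{CC}(M)}$. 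The generalized Hölder inequality for weak Schatten ideals (see \cite{sukolord}), $\ellL^{p_1,\infty}\cdots\ellL^{p_{2k},\infty}\subseteq\ellL^{1,\infty}$ when $\sum p_j^{-1}=1$, with the product of quasi-norms bounded by the product, gives that the operator inside $\tra_\omega$ lies in $\ellL^{1,\infty}(L^2(M))$ with quasi-norm $\lesssim\prod_j\|a_j\|_{C^{n/k}_{CC}(M)}$. Since $\tra_\omega$ is a bounded linear functional on $\ellL^{1,\infty}$ (up to the usual care that it is only defined on the positive part and extended by linearity, but is bounded on the whole ideal), continuity of $\xi_{k,\omega}$ and $\zeta_{k,\omega}$ on the projective tensor power follows, and hence they are well defined on the completion.

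Next, non-vanishing. For this I would simply invoke the computation already recalled in the excerpt: in the model case $M=\partial\Omega$ with $P_M$ the Szegő projection, \eqref{engszhangcomp} together with the explicit choice $a_{2j-1}=\chi\cdot\phi^*z_1$, $a_{2j}=\chi\cdot\phi^*\bar z_1$ gives $\zeta_{n,\omega}(a_1\otimes\cdots\otimes a_{2n})>0$, so $\zeta_{k,\omega}\not\equiv 0$ at least for $k=n$; for $k>n$ and for $\xi_{k,\omega}$ one adapts the same local model (the generic factors of the product can be taken to be smooth projections times the identity, reducing to the $k=n$ case, or one notes the first remark in the excerpt showing $\xi_{k,\omega}$ and $\zeta_{k,\omega}$ agree on holomorphic/antiholomorphic tuples). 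The point is that this is quoted, not reproved.

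Finally, the vanishing on $V_k$. It suffices to show $\xi_{k,\omega}(a_1\otimes\cdots\otimes a_{2k})=0$ whenever some fixed $a_j$ lies in $C^\beta_{CC}(M)$ with $\beta>n/k$, because $\tra_\omega$ and the product are continuous and $V_k$ is the closed span of such elements. Fix that index $j$. By Theorem~\ref{mainweakschatten} applied with exponent $\beta$, the commutator $[P_M,a_j]$ (or the double commutator containing it, bounding one of its two inner commutators this way) lies in the \emph{strictly smaller} ideal $\ellL^{\frac{2n}{\beta},\infty}(L^2(M))$, with $\frac{2n}{\beta}<2k$. Meanwhile each of the remaining $2k-1$ commutator factors still lies in $\ellL^{2k,\infty}$. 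Applying the generalized Hölder inequality again, the whole operator now lies in $\ellL^{r,\infty}(L^2(M))$ with
\[
\frac{1}{r}=\frac{\beta}{2n}+\frac{2k-1}{2k}>\frac{1}{2k}+\frac{2k-1}{2k}=1,
\]
so $r<1$; but $\ellL^{r,\infty}\subseteq\ellL^{1,0}$ (the closure of finite-rank operators in $\ellL^{1,\infty}$, equivalently operators $T$ with $\sum_{k<N}\mu_k(T)=o(\log N)$), and every Dixmier trace vanishes identically on $\ellL^{1,0}$ by property~(1) of a generalized limit. Hence $\tra_\omega$ of the product is $0$. The same argument with the reformulation \eqref{zetarefo} handles $\zeta_{k,\omega}$.

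\medskip

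\textbf{Main obstacle.} The genuinely delicate point is the last step: one must be careful that the extra Hölder regularity of a \emph{single} factor really improves the ideal membership enough to land strictly below $\ellL^1$ — this is why the sharpness of the exponent in Theorem~\ref{mainweakschatten} (not merely a non-sharp bound) is essential — and one must correctly identify that $\ellL^{r,\infty}$ for $r<1$ sits inside the kernel $\ellL^{1,0}$ of all Dixmier traces, i.e. that $o(\log N)$-growth of the partial sums of singular values forces $\tra_\omega=0$ via the $c_0$-vanishing axiom. Keeping track of which of the two inner commutators in a double commutator $\bigl[[P_M,a_{2i-1}],[P_M,a_{2i}]\bigr]$ carries the improved exponent, and that it suffices to improve one of them, is the only bookkeeping subtlety for $\zeta_{k,\omega}$.
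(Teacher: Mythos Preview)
Your arguments for continuity and for the vanishing on $V_k$ are correct and essentially identical to the paper's: the paper also appeals to \eqref{xirefo}, \eqref{zetarefo} and Theorem~\ref{mainweakschatten}, and for the vanishing simply observes that the improved exponent on one factor puts the product in $\mathcal{L}^1(L^2(M))$ (your detour through $\ellL^{r,\infty}\subseteq\ellL^{1,0}$ with $r<1$ is a harmless variant, since in fact $\ellL^{r,\infty}\subseteq\ellL^1$ for $r<1$).

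The genuine gap is in the non-vanishing of $\xi_{k,\omega}$. Neither of your two proposed routes works. First, the ``adapt the smooth local model'' route fails because for $k>n$ every smooth function lies in $C^\beta_{CC}(M)$ for some $\beta>n/k$, hence smooth tensors lie in $V_k$, and by the very vanishing you just proved, $\xi_{k,\omega}$ is zero on them. Second, the holomorphic/antiholomorphic remark reduces $\xi_{k,\omega}$ to $\zeta_{k,\omega}$ only on tuples where $a_{2j-1}\in\mathcal{O}(\Omega)$ and $\bar a_{2j}\in\mathcal{O}(\Omega)$; but then $\bar\partial_b a_{2j-1}=0$, so the integrand in \eqref{engszhangcomp} vanishes identically and $\zeta_{n,\omega}$ is zero on precisely those tuples---this route proves nothing. (The explicit nonzero example after \eqref{engszhangcomp} uses $\chi\cdot\phi^*z_1$ with a cutoff $\chi$, which is not holomorphic, so the remark does not apply to it.)

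The paper resolves this by an entirely different device: Lemma~\ref{nonvanlem} constructs an explicit Weierstrass-type function $W\in C^{1/4}(S^1)$ of \emph{exactly} the critical regularity (for $n=1$, $k=2$) and proves by a direct Fourier computation that $\xi_{2,\omega}(W,W,W,W)\geq\sqrt{6}/\log 2$. The point you missed is that non-vanishing of $\xi_{k,\omega}$ genuinely requires inputs that are no smoother than $C^{n/k}_{CC}$, and producing such an example with a computable Dixmier trace is nontrivial work, not a citation.
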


\begin{proof}
The computations in Equation \eqref{xirefo} and Equation \eqref{zetarefo}, together with Theorem \ref{mainweakschatten}, show the fact that $\xi_{k,\omega}$ and $\zeta_{k,\omega}$ are continuous. If $a_j\in C^\beta_{CC}(M)$ for a $\beta>n/k$, then the operators
\begin{align*}
P_M[P_M,a_1][P_M,a_2]\cdots [P_M,a_{2k-1}][P_M,a_{2k}],&\\
P_M\left[[P_M,a_1],[P_M,a_2]\right]\cdots &\left[[P_M,a_{2k-1}],[P_M,a_{2k}]\right]
\end{align*}
belong to $ \mathcal{L}^1(L^2(M))$, so their Dixmier traces vanish. It follows from Equation \eqref{engszhangcomp} and the discussion thereafter that $\zeta_{n,\omega}$ does not vanish. The fact that $\xi_{k,\omega}$ does not vanish in general follows from Lemma \ref{nonvanlem} below.
\end{proof}

\begin{remark}
The continuous Hochschild $2k$-cocycle on $C_{CC}^{\frac{n}{k}}(M)$ given by
$$\tilde{\xi}_{k,\omega}(a_0\otimes a_1\otimes \cdots \otimes a_{2k}):=\tra_\omega(P_M a_0[P_M,a_1]\cdots [P,a_{2k}])$$
satisfies $\xi_{k,\omega}(a_1\otimes \cdots \otimes a_{2k})=\tilde{\xi}_{k,\omega}(1\otimes a_1\otimes \cdots \otimes a_{2k})$. In the notation of \cite{connesncdg} one can therefore write $\xi_{k,\omega}=B_0\tilde{\xi}_{k,\omega}$. The functionals $\xi_{k,\omega}$ and $\zeta_{k,\omega}$ are related by
\small
$$\zeta_{k,\omega}(a_1\otimes \cdots a_{2k})=\sum_{(l_1,\ldots, l_k)\in \{0,1\}^k} (-1)^{\sum_{j=1}^k l_j} \xi_{k,\omega}(a_{1+l_1}\otimes a_{2-l_1}\otimes a_{3+l_2}\otimes a_{4-l_2}\otimes \cdots \otimes a_{2k-1+l_k}\otimes a_{2k-l_k}).$$
\normalsize
\end{remark}

If $M=S^1$ with its contact structure as the boundary of the unit disc, $C^\alpha_{CC}(S^1)=C^{\alpha/2}(S^1)$. In particular, $\xi_{k,\omega}$ is a multilinear functional on $C^{\frac{1}{2k}}(S^1)$.

\begin{lem}
\label{nonvanlem}
Define $W\in C^{1/4}(S^1,\R)$ as
$$W(z):=\sum_{k=0}^\infty 2^{-n/4}(z^{2^n}+z^{-2^n}).$$
Then $\xi_{2,\omega}(W,W,W,W)\geq \sqrt{6}/\log(2)$ for any generalized limit $\omega$.
\end{lem}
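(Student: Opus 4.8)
The plan is to identify $\xi_{2,\omega}(W,W,W,W)$ with the Dixmier trace of a positive Hankel-type operator and then estimate that trace from below by hand, exploiting the lacunary structure of $W$. Write $P:=P_{S^1}$ for the Szeg\"o projection onto the Hardy space $H^2(S^1)$, which has orthonormal basis $\{z^m\}_{m\ge 0}$, and let $H_W:=(1-P)WP$ be the Hankel operator with symbol $W$. Since $W$ is real valued, $PW(1-P)=H_W^*$, so $B:=PW(1-P)WP=H_W^*H_W\ge 0$; splitting the operator chain in the definition of $\xi_{2,\omega}$ at its central projection gives
\[\xi_{2,\omega}(W,W,W,W)=\tra_\omega\big(B^2\big)=\tra_\omega\big((H_W^*H_W)^2\big).\]
Because $W\in C^{1/4}(S^1)=C^{1/2}_{CC}(S^1)$, Theorem~\ref{mainweakschatten} applied with $T=P$, $\alpha=\tfrac12$ and $d+2=2$ gives $[P,W]\in\ellL^{4,\infty}(L^2(S^1))$; hence $H_W=-(1-P)[P,W]P\in\ellL^{4,\infty}$, so that $B\in\ellL^{2,\infty}$ and $B^2\in\ellL^{1,\infty}$. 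The Dixmier trace on the right is therefore well defined and, by positivity of $\tra_\omega$, nonnegative.

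To get a lower bound valid for every generalized limit, recall that such an $\omega$ is a state on $\ell^\infty(\N)$ vanishing on $c_0(\N)$, hence squeezed between the $\liminf$ and $\limsup$ of any bounded real sequence. Thus
\[\tra_\omega(B^2)\ \ge\ \liminf_{N\to\infty}\frac{1}{\log(N+2)}\sum_{k=0}^{N}\mu_k\big(B^2\big).\]
By the Ky Fan maximum principle for the positive operator $B^2$ one has $\sum_{k=0}^{N}\mu_k(B^2)\ge\sum_{m=0}^{N}\langle z^m,B^2z^m\rangle=\sum_{m=0}^{N}\|Bz^m\|^2$, and Cauchy--Schwarz gives $\|Bz^m\|\ge\langle z^m,Bz^m\rangle=\|H_Wz^m\|^2$. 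So it suffices to bound $\sum_{m=0}^{N}\|H_Wz^m\|^4$ from below.

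This is the one genuinely computational step, and it is where the lacunarity enters. From $Wz^m=\sum_{n\ge0}2^{-n/4}\big(z^{2^n+m}+z^{m-2^n}\big)$ one reads off $H_Wz^m=(1-P)(Wz^m)=\sum_{n:\,2^n>m}2^{-n/4}z^{m-2^n}$, so $\|H_Wz^m\|^2=\sum_{n:\,2^n>m}2^{-n/2}$. For $m$ in the dyadic block $2^{j-1}\le m\le 2^j-1$ (and for $m=0$, with $j=0$) this geometric sum equals $2^{-j/2}\sum_{i\ge0}2^{-i/2}=2^{-j/2}(2+\sqrt2)$, a value attained for $2^{j-1}$ values of $m$ when $j\ge 1$. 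Summing over all $m\le 2^J-1$ gives
\[\sum_{m=0}^{2^J-1}\|H_Wz^m\|^4=(2+\sqrt2)^2\Big(1+\tfrac{J}{2}\Big),\]
and since $\log(2^J+1)\sim J\log 2$ we obtain $\liminf_N\frac{1}{\log(N+2)}\sum_{k=0}^{N}\mu_k(B^2)\ge\frac{(2+\sqrt2)^2}{2\log 2}=\frac{3+2\sqrt2}{\log 2}$. As $3+2\sqrt2>\sqrt6$, this yields $\xi_{2,\omega}(W,W,W,W)\ge\sqrt6/\log 2$, in fact with room to spare.

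The main obstacle is not depth but care in the bookkeeping: one must verify that $B$ is exactly $H_W^*H_W$ (so that $B^2\ge 0$ and the Dixmier trace formula from the text applies), track the weak--Schatten exponents to land $B^2$ in $\ellL^{1,\infty}$, and --- most importantly --- observe that already the diagonal entries of $B^2$ in the monomial basis grow at the full logarithmic rate, so that the crude Ky Fan / Cauchy--Schwarz estimate loses nothing essential. Uniformity in $\omega$ is then automatic from the squeezing property of states that annihilate $c_0$.
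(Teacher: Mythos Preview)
Your proof is correct. The overall architecture matches the paper's: both identify $\xi_{2,\omega}(W,W,W,W)=\tra_\omega(B^2)$ with $B=PW(1-P)WP=H_W^*H_W\ge 0$, use positivity of $\omega$ together with the Ky Fan/min--max principle to bound the Dixmier trace from below by $\liminf_N\frac{1}{\log N}\sum_{l\le N}\|Be_l\|^2$, and then estimate these diagonal terms. The difference lies in how that last estimate is carried out. The paper computes $\|Be_l\|^2=\|P[P,W]We_l\|^2$ directly, obtaining a four--index sum over a set $\Gamma_l$; it then restricts to diagonal indices, compares the resulting series to an integral, and finishes with a Taylor expansion to extract the constant $\sqrt{6}/\log 2$. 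You instead insert one further Cauchy--Schwarz step, $\|Be_l\|\ge\langle e_l,Be_l\rangle=\|H_We_l\|^2$, which at first sight throws information away but in fact makes everything exactly computable: the lacunary structure of $W$ forces the negative Fourier modes of $We_l$ to sit at distinct exponents $l-2^n$, so $\|H_We_l\|^2$ is a clean geometric series, constant on dyadic blocks. Your approach avoids the integral comparison and Taylor arguments entirely and yields the sharper constant $(3+2\sqrt2)/\log 2>\sqrt6/\log 2$.
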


\begin{proof}
By \cite[Theorem $4.9$, Chapter II]{zygbook}, $W\in C^{1/4}(S^1,\R)$. For simplicity we denote the Szeg\"o projection on $S^1$ by $P$. Let $e_l(z)=z^l$ for $l\in \Z$ be the standard Fourier-basis for $L^2(S^1)$, the eigenbasis associated with $z\partial_z\in \Psi^2_H(S^1)=\Psi^1(S^1)$. $P$ is characterized by its action n these basis vectors: $Pe_l=e_l$ if $l\geq 0$ and $Pe_l=0$ if $l<0$. For $l\in \Z$
\begin{align*}
\langle PW(1-P)WPW(1-P)WPe_l,e_l\rangle&=\langle P[P,W]WPe_l,[P,W]We_l\rangle\\
&=
\begin{cases}
\|P[P,W]We_l\|_{L^2(S^1)}^2,\;&l\geq 0,\\
0,\;&l<0.
\end{cases}
\end{align*}
The operator
$$PW(1-P)WPW(1-P)WP=(PW(1-P)WP)^2=(P[P,W]WP)^2$$
is a positive operator. We let $(\lambda_k(PW(1-P)WP)^2)_{k\in \N}$ denote its eigenvalues. A consequence of the min-max principle and the positivity of $\omega$ is the estimate
\begin{align*}
\tra_\omega(PW(1-P)WPW(1-P)WP)&=\omega\left(\frac{\sum_{k=0}^N\lambda_k(PW(1-P)WP)^2}{\log N}\right)_{N\geq 1}\\
&\geq \omega\left(\frac{\sum_{k=0}^N\|P[P,W]WPe_k\|^2_{L^2(S^1)}}{\log N}\right)_{N\geq 1}.
\end{align*}
Hence, the lemma follows once we prove the following lower estimate:
\begin{equation}
\label{estimatingthatthang}
\|P[P,W]We_l\|_{L^2(S^1)}^2\geq \frac{2}{\log(2)l}.
\end{equation}
We define $g_l:=[P,W]We_l$. The Binomial Theorem implies the identity
\begin{align*}
g_l(z)&=\frac{1}{2\pi i}\int_0^{2\pi} \frac{W(\e^{i\theta})-W(z)}{\e^{i\theta}-z}W(\e^{i\theta})\e^{il\theta}\rd \theta\\
&=\frac{1}{2\pi i}\int_0^{2\pi}\sum_{n,m=0}^\infty \sum_{k=0}^{2^n-1} 2^{-(n+m)/4}\left(\e^{i(2^n-1-k+l+2^m)\theta}+\e^{i(2^n-1-k+l-2^m)\theta}\right)z^k\rd \theta\\
&\quad-\frac{1}{2\pi i}\int_0^{2\pi}\sum_{n,m=0}^\infty \sum_{k=0}^{2^n-1} 2^{-(n+m)/4}\left(\e^{i(k+l-2^n+2^m)\theta}+\e^{i(k+l-2^n-2^m)\theta}\right)z^{-k-1}\rd \theta\\
&=\sum_{n=0}^\infty \sum_{2^n-l\geq 2^m\geq l+1}2^{-(n+m)/4}z^{2^n-2^m+l}-\sum_{n=0}^\infty \sum_{l-2^n\leq 2^m\leq l-1}2^{-(n+m)/4}z^{l-2^n-2^m-1}.
\end{align*}
In particular,
$$Pg_l(z)=\sum_{n=0}^\infty \sum_{2^n-l\geq 2^m\geq l+1}2^{-(n+m)/4}z^{2^n-2^m+l},$$
and we arrive at the norm computation 
\begin{equation}
\label{computationofpgl}
\|Pg_l\|_{L^2(S^1)}^2=\sum_{(n',n,m',m)\in \Gamma_l} 2^{-(n+n'+m+m')/4},
\end{equation}
where $\Gamma_l$ denotes the set of $(n',n,m',m)\in \N^4$ such that
\[2^n-l\geq 2^m\geq l+1,\quad 2^{n'}-l\geq 2^{m'}\geq l+1\quad\mbox{and}\quad 2^n-2^m=2^{n'}-2^{m'}.\]
We estimate
\[\|Pg_l\|_{L^2(S^1)}^2\geq \sum_{2^n\geq 2l+1}2^{-n/2}(2^n-l)^{-1/2}\geq \int_{\log_2(2l+2)}^\infty 2^{-x/2}(2^x-l)^{-1/2}\rd x.\]
After Taylor expanding $2^{-x/2}(2^x-l)^{-1/2}=2^{-x}(1-2^{-x}l)^{-1/2}$, we arrive at the identity
\[2^{-x/2}(2^x-l)^{-1/2}=\sum_{n=0}^\infty \begin{pmatrix} -1/2\\n\end{pmatrix}(-l)^n2^{-(n+1)x}.\]
The right hand side converges absolutely for $x\geq \log_2(2l+1)$. As such,
\begin{align*}
\int_{\log_2(2l+2)}^\infty 2^{-x/2}(2^x-l)^{-1/2}\rd x&=\frac{1}{\log(2)}\sum_{n=0}^\infty \begin{pmatrix} -1/2\\n\end{pmatrix}\frac{(-1)^{n+1}}{n+1}\frac{l^n}{(l+1)^{n+1}}\frac{1}{2^{n+1}}\\
&=\frac{1}{l\log(2)}\sum_{n=0}^\infty \begin{pmatrix} -1/2\\n\end{pmatrix}\frac{(-1)^{n+1}}{n+1}\left(\frac{l}{2l+2}\right)^{n+1}.
\end{align*}
Again using a Taylor series, for $(1+l/(2l+2))^{1/2}$ we arrive at the identity
$$\int_{\log_2(2l+2)}^\infty 2^{-x/2}(2^x-l)^{-1/2}\rd x=\frac{2}{l\log(2)}\left(1+\frac{l}{2l+2}\right)^{1/2}.$$
Since $(1+l/(2l+2))^{1/2}\geq \sqrt{3/2}$, Equation \eqref{estimatingthatthang} follows.
\end{proof}

\begin{remark}
Combining Lemma \ref{nonvanlem} with Theorem \ref{vanishingcor}, we deduce that the continuous functional
$$C^{1/4}(S^1)\ni f\mapsto \xi_{2,\omega}(f,W,W,W)$$
is non-vanishing, but vanishes on the closure of $C^\beta(S^1)$ for any $\beta>1/4$.
\end{remark}

\begin{lem}
\label{zetaandxitheorem}
If $(e_l)_{l\in \N}\subseteq C^\infty(M)$ forms an eigenbasis of $L^2(M)$ for a self-adjoint $H$-elliptic operator $D\in \Psi^m_H(M)$, then for any $a_1,a_2,\ldots, a_{2n}\in \textnormal{Lip}_{CC}(M)$:
\begin{align}
\label{xicomp}
\xi_{n,\omega}&(a_1\otimes \cdots \otimes a_{2n})\\
\nonumber
&=\omega\left[\left(\frac{\sum_{l=0}^N\langle P_Ma_1(1-P_M)a_2P_M\cdots P_Ma_{2n-1}(1-P_M)a_{2n}P_Me_l,e_l\rangle}{\log(N+2)}\right)_{N\in \N}\right],\\
\label{zetacomp}
\zeta_{n,\omega}&(a_1\otimes \cdots \otimes a_{2n})\\
\nonumber
&=\omega\left[\left(\frac{\sum_{l=0}^N\langle [P_Ma_1P_M,P_Ma_2P_M]\cdots [P_Ma_{2n-1}P_M,P_Ma_{2n}P_M]e_l,e_l\rangle}{\log(N+2)}\right)_{N\in \N}\right].
\end{align}
\end{lem}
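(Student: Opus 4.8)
The plan is to identify both right--hand sides as the value, on an explicit partial--trace sequence, of the Dixmier trace attached to a positive modulating operator built from $D$, and to verify the modulation hypothesis by means of the zeroth order estimate \eqref{tcomma} of Section~\ref{provingweakschattenlipschitz}. Since $\xi_{n,\omega}$ and $\zeta_{n,\omega}$ are handled identically I concentrate on \eqref{xicomp}. Put $A:=P_Ma_1(1-P_M)a_2P_M\cdots P_Ma_{2n-1}(1-P_M)a_{2n}P_M$. Because $M$ is a $(2n-1)$--dimensional contact manifold one has $d+2=2n$, so Theorem~\ref{mainweakschatten} with $\alpha=1$ (recalling $C^1_{CC}(M)=\textnormal{Lip}_{CC}(M)$) gives $[P_M,a_j]\in\ellL^{2n,\infty}(L^2(M))$ for each $j$; together with the identity \eqref{xirefo} and the H\"older inequality for weak Schatten ideals this yields $A\in\ellL^{1,\infty}(L^2(M))$, so $\tra_\omega(A)$ is defined. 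For any orthonormal basis $(e_l)$ the majorization bound $\big|\sum_{l=0}^N\langle Ae_l,e_l\rangle\big|\le\sum_{k=0}^N\mu_k(A)\lesssim\log(N+2)$ shows the sequence in \eqref{xicomp} is bounded, so its $\omega$--value is well defined; the content of the lemma is the identity of that value with $\tra_\omega(A)$.

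I then introduce the modulating operator. Since $(e_l)_{l\in\N}$ is an $L^2$--eigenbasis, $D$ has compact resolvent and $m:=\mathrm{ord}(D)>0$; I order $(e_l)$ so that $(|\lambda_l(D)|)_{l}$ is non--decreasing (the enumeration for which \eqref{xicomp} is meant to hold) and set $V:=(1+D^2)^{-(d+2)/(2m)}\ge0$, which has the same eigenbasis as $D$. By the Weyl law for $H$--elliptic operators \cite{ponge} (cf.\ Theorem~\ref{variousheisenbergthings}, and using that $(1+D^2)^{1/m}\in\Psi^2_H(M)$ by \cite[Chapter~5]{ponge}) one has $\mu_l(V)\sim c\,(l+1)^{-1}$, so $V$ is an admissible modulating operator. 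The relevant statement from the theory of modulated operators, see \cite{sukolord}, is: if $B\in\ellL^{1,\infty}(L^2(M))$ is $V$--modulated, then $\tra_\omega(B)=\omega\big((\tfrac{1}{\log(N+2)}\sum_{l=0}^N\langle Be_l,e_l\rangle)_{N\in\N}\big)$ for every generalized limit $\omega$. It therefore remains to show that $A$ is $V$--modulated; the operator in \eqref{zetacomp} is, by \eqref{zetarefo}, a finite sum of operators of the same shape as $A$ with the $a_j$ permuted, and is covered by the same argument.

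To prove the $V$--modulation of $A$, recall that \eqref{tcomma} says that $[P_M,a_j]$ is bounded from $W_H^{-1/2}(M)$ to $W_H^{1/2}(M)$. Since the Heisenberg Sobolev scale of $\Delta_H$ coincides, after rescaling by the order, with the one defined by $D$, that is $W_H^s(M)=\mathrm{Dom}\big((1+D^2)^{s/(2m)}\big)$ with equivalent norms, this amounts to the $L^2$--boundedness of $B_j:=(1+D^2)^{1/(4m)}[P_M,a_j](1+D^2)^{1/(4m)}$, i.e.\ $[P_M,a_j]=(1+D^2)^{-1/(4m)}B_j(1+D^2)^{-1/(4m)}$. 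Feeding this into \eqref{xirefo}, combining adjacent powers, and writing powers of $1+D^2$ as powers of $V$ gives
\[A=\pm\,P_M\,V^{\frac{1}{2(d+2)}}\,B_1\,V^{\frac{1}{d+2}}\,B_2\,V^{\frac{1}{d+2}}\cdots B_{2n-1}\,V^{\frac{1}{d+2}}\,B_{2n}\,V^{\frac{1}{2(d+2)}},\qquad B_j\in\Bo(L^2(M)),\]
and the exponents of $V$ sum to $1$ because $d+2=2n$. Each factor $V^{\gamma}$ is a function of $V$, hence $V$--modulated of order $1/\gamma$, a property stable under multiplication by the bounded operators $P_M$ and $B_j$; grouping $A$ into the $2n+1$ factors $P_MV^{\frac{1}{2(d+2)}}$, $B_jV^{\frac{1}{d+2}}$ ($1\le j\le 2n-1$) and $B_{2n}V^{\frac{1}{2(d+2)}}$, whose modulation orders $q_i$ satisfy $\sum_i 1/q_i=1$, the composition rule for modulated operators in \cite{sukolord} shows that $A$ is $V$--modulated. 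This completes the proof.

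I expect the last step to be the main obstacle: one must work with ``$\ellL^{p,\infty}$--modulated'' operators rather than with the weak Schatten ideals themselves, the subtlety being that the intermediate factors $V^{\gamma}$ lie in $\ellL^{1/\gamma,\infty}$ but not in $\ellL^{1/\gamma}$, so a plain Schatten--H\"older estimate is off by exactly the borderline that the modulated--operator formalism is designed to absorb; applying it cleanly requires locating the precise composition theorem in \cite{sukolord} and checking that multiplication by a bounded operator preserves modulation in the graded sense used above. A safe alternative, which also sidesteps the Sobolev--scale comparison between $\Delta_H$ and $D$, is to rerun the argument of Section~\ref{provingweakschattenlipschitz} with $\Delta_H$ replaced by the positive order--$2$ $H$--elliptic operator $(1+D^2)^{1/m}$; this yields the factorization $[P_M,a_j]=(1+D^2)^{-1/(4m)}B_j(1+D^2)^{-1/(4m)}$ directly in terms of $D$, after which the modulation argument applies verbatim.
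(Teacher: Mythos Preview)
Your strategy---use the Sobolev smoothing \eqref{tcomma} to make $A$ compatible with the eigenbasis of $D$, then invoke the modulated--operator machinery of \cite{sukolord}---is exactly the paper's, and your factorisation
\[
A=P_MV^{\frac{1}{2(d+2)}}B_1V^{\frac{1}{d+2}}\cdots B_{2n}V^{\frac{1}{2(d+2)}}
\]
is correct. The gap is the last step: the ``composition rule for modulated operators'' with graded orders $q_i$ and $\sum 1/q_i=1$ is not a result in \cite{sukolord}. The notion of $V$--modulated there is ungraded, and products of the intermediate factors $V^{\gamma}\in\mathcal{L}^{1/\gamma,\infty}$ are not covered by any composition theorem of that shape; your own caveat in the last paragraph identifies this correctly but does not resolve it.

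The paper avoids this by not trying to modulate each factor. After normalising $D$ to be strictly positive of order~$\tfrac12$ (and handling $m\le0$ by passing to $D^2+1$ or $(D+R)^{-2}+1$---a reduction you omit), it writes $T=T'T_0T''$ with $T'=[P_M,a_1]$, $T''=[P_M,a_{2n}]$ and $T_0$ the middle product, uses \eqref{tcomma} only at the two ends to get $DT'D,\,DT''D\in\Bo(L^2)$, and combines this with $T'T_0,\,T_0T''\in\mathcal{L}^{2n/(2n-1),\infty}$ and $D^{-1}\in\mathcal{L}^{4n,\infty}$ to obtain, via a single H\"older step,
\[
TD=(T'T_0D^{-1})(DT''D)\in\mathcal{L}^{4n/(4n-1),\infty},\qquad DT\in\mathcal{L}^{4n/(4n-1),\infty}.
\]
This two--sided weighted weak--Schatten condition is precisely the hypothesis of \cite[Lemma~11.2.10]{sukolord}, which then yields \eqref{tramod} directly (after splitting into real and imaginary parts).

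Your factorisation in fact delivers the same conclusion without any new composition rule: multiplying $A$ on the right by $V^{-1/(2(d+2))}$ strips the last factor, and plain H\"older in weak Schatten ideals on the remaining $V^\gamma\in\mathcal{L}^{1/\gamma,\infty}$ gives $AV^{-1/(2(d+2))}\in\mathcal{L}^{4n/(4n-1),\infty}$, likewise on the left. Since $V^{1/(2(d+2))}$ is comparable to $D$ after the paper's normalisation, this is exactly the condition above. So the fix is simply to replace the invented graded modulation argument by this one--line H\"older computation and then cite \cite[Lemma~11.2.10]{sukolord}.
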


\begin{proof}
To prove the identities \eqref{xicomp} and \eqref{zetacomp}, we may assume that $D$ is strictly positive. Indeed, we otherwise replace $D$ by $D^2+1$, if $D$ is of positive order, and with $(D+R)^{-2}+1$, if $D$ is of negative order, for a suitable self-adjoint smoothing operator $R$. For simplicity we also assume that the order of $D$ is $1/2$. By \eqref{xirefo} and \eqref{zetarefo} the identities \eqref{xicomp} and \eqref{zetacomp} are a consequence of the following: If $T\in \mathcal{L}^{1,\infty}(L^2(M))$ is of the form  $T=T'T_0T''$, where $T', T''$ extend to continuous operators $T',T'':W^{-1/2}_H(M)\to W^{1/2}_H(M)$ and $T'T_0, T_0T'' \in \mathcal{L}^{2n/(2n-1),\infty}(L^2(M))$, then
\begin{equation}
\label{tramod}
\tra_\omega(T)=\omega\left(\frac{\sum_{l=0}^N\langle Te_l,e_l\rangle}{\log(N+2)}\right)_{N\in \N}.
\end{equation}
We show this as follows: If $T'$ extends to a continuous operator $T':W^{-1/2}_H(M)\to W^{1/2}_H(M)$, the operator $DT'D$ extends to a bounded operator on $L^2(M)$. Similarly for $T''$, if $T''$ extends to $T'':W^{-1/2}_H(M)\to W^{1/2}_H(M)$ then $DT''D$ extends to a bounded operator. By the Weyl law in the Heisenberg calculus, $D^{-1}\in \mathcal{L}^{(4n,\infty)}(L^2(M))$. One arrives at $T'T_0 D^{-1}\in \mathcal{L}^{4n/(4n-1),\infty}(L^2(M))$ and 
\[ TD=(T'T_0 D^{-1}) DT''D\in \mathcal{L}^{4n/(4n-1),\infty}(L^2(M)).\]
Similarly, $D^{-1}T_0T''\in \mathcal{L}^{4n/(4n-1),\infty}(L^2(M))$ and
$$ DT=DT'D(D^{-1}T_0T'')\in \mathcal{L}^{4n/(4n-1),\infty}(L^2(M)).$$
Equation \eqref{tramod} follows from \cite[Lemma 11.2.10]{sukolord} using the decomposition
$$ \tra_\omega(T)=\tra_\omega (\mathrm{Re} \,T)+i \tra_\omega(\mathrm{Im}\, T).$$
\end{proof}

\begin{remark}
Both sides of Equation \eqref{engszhangcomp} define continuous multilinear functionals on $\textnormal{Lip}_{CC}(\partial\Omega)$. It remains open whether \eqref{engszhangcomp} in fact holds true for all functions that are Lipschitz in the Carnot-Carath\'eodory metric. Lemma \ref{zetaandxitheorem} indicates that to prove Equation \eqref{engszhangcomp} in general, it suffices to prove continuity of $\zeta_{n,\omega}$ in a very weak topology.
\end{remark}

\begin{remark}
It is unclear if the functionals $\xi_{k,\omega}$ and $\zeta_{k,\omega}$ are computable for $k>n$. A first approach would be to prove the natural analogue of Lemma \ref{zetaandxitheorem}. The method of proof requires regularity in the Sobolev scale $W^s_H$, which we only know for the complex interpolation space $[C(M),\textnormal{Lip}_{CC}(M)]_\alpha$. The analogue of Lemma \ref{zetaandxitheorem} for $\zeta_{k,\omega}$ and $\xi_{k,\omega}$ is trivially true provided $a_j\in [C(M),\textnormal{Lip}_{CC}(M)]_{n/k}$ for some $j$. In this case both sides vanish by Theorem \ref{vanishingcor}.

As another obstacle $\xi_{k,\omega}$ and $\zeta_{k,\omega}$ generally depend on the generalized limit $\omega$ \cite{gg}. A more elementary problem would be to compute the exact value of $\xi_{2,\omega}(W,W,W,W)$ using Equation \eqref{computationofpgl}.
\end{remark}

\section*{{\bf Acknowledgements}}
This work was supported by the Danish National Research Foundation (DNRF) through the Centre for Symmetry and Deformation and the Danish Science Foundation (FNU) through research grant 10-082866. The authors also thank the mathematics institutes of Leibniz University of Hannover and the GRK1463 for facilitating this collaboration. The authors gratefully acknowledge the support by the Institut Mittag-Leffler (Djursholm, Sweden) where this work was initiated. The second author wishes to thank Stefan Hasselmann for discussions on the Carnot-Carath\'eodory metric. Thanks to Fedor Sukochev and Steven Lord for pointing out the result  \cite[Lemma 11.2.10]{sukolord} of their book providing us with a proof of the computational Lemma \ref{zetaandxitheorem}.

\end{document}